\newif\ifproofs\proofsfalse\ifproofs\RequirePackage[displaymath,mathlines]{lineno}\fi
\newif\ifsubsections
\definecolor{cmykred}{cmyk}{0,1,1,0}
    \definecolor{linkred}{rgb}{0.7,0.2,0.2}
    \definecolor{linkblue}{rgb}{0,0.2,0.6}
    \definecolor{linkred}{cmyk}{0.0,0.0,0.0,1.0}
    \definecolor{linkblue}{cmyk}{0,0.0,0.0,1.0}
\theoremstyle{plain}
\newtheorem{theorem}[equation]{Theorem}
\newtheorem{proposition}[equation]{Proposition}
\newtheorem{lemma}[equation]{Lemma}
\newtheorem{corollary}[equation]{Corollary}
\theoremstyle{definition}
\newtheorem{definition}[equation]{Definition}
\newtheorem{example}[equation]{Example}
\newtheorem{exercise}[equation]{Exercise}
\theoremstyle{remark}
\newtheorem{remark}[equation]{Remark}
\newtheorem*{remark*}{Remark}
\newcommand{\cC}{\mathcal C}
\DeclarePairedDelimiter{\set}{\{}{\}}
\newcommand{\lgray}{gray!40!white}
\newcommand{\hC}{h\cC_1^\R}
\newcommand{\ghC}{\hC[(\hC)^{-1}]}
\newcommand{\pt}{\mathrm{pt}}
\newcommand{\boundingrect}[2]{
	\draw (0, 0) -- (0, #1);
	\draw (#2, 0) -- (#2, #1);
}
\newcommand{\boundingwalls}[1]{
	\boundingrect{#1}{#1}
}
\newcommand{\stuff}{\footnotesize\bfseries\textcolor{cmykred}{(?)}}
\newcommand{\textstuff}{{\bfseries\textcolor{cmykred}{(?)}}\!}
\newcommand{\singledisc}[1]{
	\begin{tikzpicture} 
	\boundingwalls{#1}
	\draw (#1/2, #1/2) circle (0.25 * #1);
	\end{tikzpicture}
}
\newcommand{\twodiscs}[1]{
	\begin{tikzpicture} 
	\boundingrect{#1}{2*#1/3}
	\draw (#1/3, #1/4) circle (0.125 * #1);
	\draw (#1/3, 3*#1/4) circle (0.125 * #1);
	\end{tikzpicture}
}
\newcommand{\twoleftsemicircles}[1]{
	\begin{tikzpicture} 
	  \begin{scope}
		\clip (0, 0) rectangle (#1/2, #1);
		\draw (0, #1/4) circle (0.125 * #1);
		\draw (0, 3*#1/4) circle (0.125 * #1);
	  \end{scope}
	\boundingrect{#1}{#1/2}
	\end{tikzpicture}
}
\newcommand{\tworightsemicircles}[1]{
	\begin{tikzpicture} 
	  \begin{scope}
		\clip (0, 0) rectangle (#1/2, #1);
		\draw (#1/2, #1/4) circle (0.125 * #1);
		\draw (#1/2, 3*#1/4) circle (0.125 * #1);
	  \end{scope}
	\boundingrect{#1}{#1/2}
	\end{tikzpicture}
}
\newcommand{\leftmacaroni}[1]{
	\begin{tikzpicture}
	  \begin{scope}
		\clip (0, 0) rectangle (2*#1/3, #1);
		\draw (0, #1/2) circle (0.35 * #1);
		\draw (0, #1/2) circle (0.15 * #1);
	  \end{scope}
	  \boundingrect{#1}{2*#1/3}
	\end{tikzpicture}
}
\newcommand{\wg}[1]{\begin{gathered}\ #1\ \end{gathered}}
\newcommand{\CircNum}[1]{\ooalign{\hfil\raise .00ex\hbox{\scriptsize #1}\hfil\crcr\mathhexbox20D}}
\newcommand{\bC}{\mathbb{C}}
\newcommand{\bD}{\mathbb{D}}
\newcommand{\bN}{\mathbb{N}}
\newcommand{\bP}{\mathbb{P}}
\newcommand{\bR}{\mathbb{R}}
\newcommand{\bZ}{\mathbb{Z}}
\newcommand\Diff{\mathrm{Diff}}
\newcommand\Emb{\mathrm{Emb}}
\newcommand\colim{\operatorname*{colim}}
\renewcommand\mathscr[1]{\overline{\mathcal{#1}}\vphantom{\overline{\mathcal{#1}}}}
\newcommand{\Sing}{\mathrm{Sing}}
\newcommand{\Fr}{\mathrm{Fr}}
\newcommand{\R}{\bR}
\newcommand{\N}{\mathbbm{N}}
\newcommand{\Z}{\mathbbm{Z}}
\newcommand{\Q}{\mathbbm{Q}}
\newcommand{\Int}{\mathrm{int}}
\newcommand{\Hom}{\mathrm{Hom}}
\newcommand{\Aut}{\mathrm{Aut}}
\renewcommand{\epsilon}{\varepsilon}
\newcommand{\Mod}{\mathrm{Mod}}
\newcommand{\id}{\mathrm{id}}
\newcommand{\Map}{\mathrm{Map}}
\newcommand{\Gr}{\mathrm{Gr}}
\newcommand{\Ob}{\mathrm{Ob}}
\newcommand{\Sets}{\mathrm{Sets}}
\newcommand{\Top}{\mathrm{Top}}
\newcommand{\F}{\mathbb{F}}
\def\@setcontribs{%
  \@xcontribs
  {\xcontribs}%
}
\let\@wraptoccontribs\wraptoccontribs
\def\@setauthors{%
  \begingroup
  \def\thanks{\protect\thanks@warning}%
  \trivlist
  \centering\footnotesize \@topsep30\p@\relax
  \advance\@topsep by -\baselineskip
  \item\relax
  \author@andify\authors
  \def\\{\protect\linebreak}%
{\large\authors\par\vspace*{-9pt}}
  \ifx\@empty\contribs
  \else
  \@setcontribs
    \@closetoccontribs
  \fi
  \endtrivlist
  \endgroup
}
\begin{document}

%
%
%
%
%
%

\newcommand\bettertitle{Lectures on Invertible Field Theories}
\newcommand\bettershorttitle{Invertible Field Theories}
\newcommand\exauthors{Arun Debray, S{\o}ren Galatius, and Martin Palmer} 

\title[\bettershorttitle]{\bettertitle}

%
%

\author[S{\o}ren Galatius]{S{\o}ren Galatius\\[4pt]}
\email{galatius@math.ku.dk}
\address{Department of Mathematics, University of Copenhagen, Denmark}

\contrib[with Exercises by Arun Debray, S{\o}ren Galatius, and Martin Palmer]{}
%
%
\subjclass[2010]{Primary 57R90, 57R15, 57R56, 55P47}
\keywords{Park City Mathematics Institute, mapping class group, surface bundles}

\thispagestyle{empty}
\begin{abstract}
  Four lectures on invertible field theories at the Park City Mathematics Institute 2019.
  
  Cobordism categories are introduced both as plain categories and topologically enriched.  We then discuss localization of categories and its relationship to classifying spaces, and state the main theorem of classification of invertible field theories in these terms.  We also discuss symmetric monoidal structures and their relationship to actions of the little disk operads.  In the final lecture we discuss an application of cobordism categories to characteristic classes of surface bundles.

  Emphasis will be on self-contained definitions and statements, referring to original literature for proofs.
\end{abstract}  

%
%
\maketitle 

\tableofcontents

%
%

\setcounter{section}{-1}
\section*{Introduction}\refstepcounter{section}
\addcontentsline{toc}{section}{Introduction}

The notion of \emph{cobordism} between smooth manifolds and its relationship with \emph{homotopy theory} goes back at least to the work of Pontryagin and Thom.  In these lectures we will outline some more recent developments, including connections to \emph{diffeomorphism groups}.

\subsection*{Acknowledgments}

I would like to thank Arun Debray and Martin Palmer for their help with the exercise sessions during the summerschool including their coauthorship on the exercises at the end of these notes.  I also thank David Ayala and Ulrike Tillmann for helpful conversations and feedback on draft versions of these notes.  Finally, I thank the PCMI and all participants for creating a unique and inspiring atmosphere.

The author was supported by the European Research Council (ERC) under the European Union's Horizon 2020 research and innovation programme (grant agreement No.\ 682922),  the Danish National Research Foundation through the Centre for Symmetry and Deformation (DNRF92), and the EliteForsk Prize.

\section*{Lecture 1: Cobordisms}\refstepcounter{section}
\label{sec:cobordisms}
\addcontentsline{toc}{section}{Lecture 1: Cobordisms}

Recall that a smooth manifold $M$ is \emph{closed} if it is compact and $\partial M = \emptyset$.  We do not require $M$ to be connected.  For any $d \in \Z$, the empty set is a manifold of dimension $d$.

\subsection{Classical definitions}
\label{sec:classical}

\begin{definition}
  Let $M_0$ and $M_1$ be smooth closed manifolds of dimension $d-1$.  An (abstract) \emph{cobordism} from $M$ to $N$ is a triple $(W,c_\mathrm{in},c_\mathrm{out})$, where $W$ is a compact manifold, and
  \begin{align*}
    c_\mathrm{in}: [0,\infty) \times M_0 &\hookrightarrow W\\
    c_\mathrm{out}: (-\infty,0] \times M_1 &\hookrightarrow W
  \end{align*}
  are smooth embeddings with disjoint images, such that
  \begin{equation*}
    \partial W = c_\mathrm{in}(\{0 \} \times M_0) \cup c_\mathrm{out}(\{0\} \times M_1).
  \end{equation*}
  Sometimes we shall omit the two charts $c_\mathrm{in}$ and $c_\mathrm{out}$ from the notation and simply say ``$W$ is a cobordism from $M_0$ to $M_1$'' and write $W: M_0 \leadsto M_1$.
  
  Two manifolds $M_0$ and $M_1$ are \emph{cobordant} if there exists a cobordism from $M_0$ to $M_1$.
\end{definition}

\newcommand{\cin}{c_\mathrm{in}}
\newcommand{\cout}{c_\mathrm{out}}

\begin{lemma}\label{lem:equiv-rel}
  Being cobordant is an equivalence relation.
\end{lemma}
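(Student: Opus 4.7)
The plan is to verify the three defining axioms of an equivalence relation, namely reflexivity, symmetry, and transitivity, where the first two amount to explicit constructions and transitivity carries the substantive content.

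For reflexivity I would exhibit the cylinder cobordism on any closed $(d-1)$-manifold $M$: take $W = [0,1] \times M$, choose a diffeomorphism $\phi \colon [0,\infty) \to [0,1/2)$ with $\phi(0) = 0$, and set $\cin(t,m) = (\phi(t),m)$ and $\cout(t,m) = (1 - \phi(-t), m)$ for $t \leq 0$. The images $[0,1/2) \times M$ and $(1/2,1] \times M$ are disjoint, and their $t = 0$ slices together exhaust $\partial W$. For symmetry, given $(W,\cin,\cout) \colon M_0 \leadsto M_1$, I would reverse direction on the same underlying manifold $W$ by setting $\cin'(t,m) = \cout(-t,m)$ for $t \in [0,\infty)$ and $\cout'(t,m) = \cin(-t,m)$ for $t \in (-\infty,0]$, obtaining a cobordism from $M_1$ to $M_0$.

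For transitivity, given $(W,\cin^W,\cout^W) \colon M_0 \leadsto M_1$ and $(V,\cin^V,\cout^V) \colon M_1 \leadsto M_2$, I would concatenate by forming the pushout
\[
U \;=\; (W \sqcup V) \big/ \bigl(\cout^W(0,m) \sim \cin^V(0,m),\; m \in M_1\bigr),
\]
which is compact as the image of a compact space. To upgrade $U$ to a smooth manifold I would use the atlases of $W$ and $V$ away from the seam, and at the seam use the single embedding $\R \times M_1 \hookrightarrow U$ obtained by combining $\cout^W$ on $(-\infty,0]$ with $\cin^V$ on $[0,\infty)$; this canonically matches the smooth structures across the interface. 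The incoming and outgoing collars of $U$ are then the compositions of $\cin^W$ and $\cout^V$ with the quotient map, and their images remain disjoint in $U$ since in $W$ and $V$ they were already disjoint from the collars at $M_1$, which are the only points identified in the quotient.

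The hard part will be making the smoothness of $U$ at the seam rigorous: the pushout is a priori only a topological space, and what promotes it to a smooth manifold is precisely the data of the half-infinite collars built into the definition. Once the concatenated chart $\R \times M_1 \to U$ is verified to be a smooth embedding compatible with both source atlases, the verification of $\partial U = \cin^W(\{0\} \times M_0) \cup \cout^V(\{0\} \times M_2)$ and of the disjointness conditions is a matter of inspection, and the three axioms follow.
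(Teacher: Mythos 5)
Your proposal is correct and follows essentially the same route as the paper: the paper likewise leaves reflexivity and symmetry to the reader (you spell them out with the cylinder and by reversing the collars) and proves transitivity by forming the quotient of $W \amalg V$ along $\cout^W(0,m) \sim \cin^V(0,m)$, endowing it with a smooth structure via the collar data at the seam --- your combined embedding $\R \times M_1 \hookrightarrow U$ is exactly the paper's seam charts $(t,u) \mapsto \cout(t,\phi(u))$ for $t \le 0$, $\cin'(t,\phi(u))$ for $t \ge 0$, just stated before composing with charts of $M_1$. The step you flag as needing care (smoothness at the seam) is the same point the paper compresses into ``it is easy to check that this defines a smooth structure,'' so there is no gap beyond what the paper itself leaves to the reader.
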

\begin{proof}[Proof sketch]
    We leave reflexivity and symmetry to the reader and focus on transitivity.  If $(W,\cin,\cout)$ is a cobordism from $M_0$ to $M_1$ and $(W',\cin',\cout')$ is a cobordism from $M_1$ to $M_2$, then we may define a cobordism $(W'',\cin'',\cout'')$, where the underlying space $W''$ is
  \begin{equation*}
    W'' = \frac{W \amalg W'}{\cout(0,x) \sim \cin'(0,x), \,\, \forall x \in M_1}
  \end{equation*}
  in the quotient topology, i.e.\ the coarsest topology making the canonical maps
  \begin{align*}
    i: W & \to W''\\
    i': W' & \to W''
  \end{align*}
  continuous.  Charts near a glued point $\cout(0,x) = \cin'(0,x)$ for $x \in M_1$ are obtained as
  \begin{equation*}
    (t,u) \mapsto
    \begin{cases}
      \cout(t,\phi(u)) & \text{for $t \leq 0$}\\
      \cin'(t,\phi(u)) & \text{for $t \geq 0$}
    \end{cases}
  \end{equation*}
  for a chart $\phi: U \to M_1$ defined on an open $U \subset \R^{d-1}$.  Around other points in $W''$ charts are obtained by composing a chart in $W$ with $i$ or a chart in $W'$ with $i'$.  It is easy to check that this defines a smooth structure on $W''$.  We obtain a cobordism $M_0 \leadsto M_2$ by setting $\cin'' = i \circ \cin$ and $\cout'' = i' \circ \cout$.
\end{proof}

\begin{remark}
  The set of cobordism classes of $k$-dimensional manifolds is often denoted $\mathfrak{N}^k$.  The direct sum
  \begin{equation*}
    \mathfrak{N}^* = \bigoplus_{k \geq 0} \mathfrak{N}^k
  \end{equation*}
  has the structure of a commutative graded ring (addition induced by disjoint union and multiplication by product of smooth manifolds) in which $2 = 0$.  In his 1954 paper \emph{Quelques propri\'et\'es globales de vari\'et\'es differentiables}, Ren\'e Thom completely determined this ring: there is an isomorphism of commutative graded rings
  \begin{equation*}
    \F_2[x_k \mid k \neq 2^i - 1] \xrightarrow{\cong} \mathfrak{N}^*,
  \end{equation*}
  from the polynomial ring over $\F_2$ with one generator in each dimension not of the form $k = 2^i - 1$.

  The citation for Ren\'e Thom's Fields medal at the 1958 ICM is: ``In 1954 [he] invented and developed the theory of cobordism in algebraic topology. This classification of manifolds used homotopy theory in a fundamental way and became a prime example of a general cohomology theory.''

  The work presented in these lectures owes an intellectual debt to Thom's profound 1954 paper, although we shall not make direct use of his calculations.
\end{remark}

In these lectures we shall be concerned with the \emph{reasons} that two closed manifolds $M_0$ and $M_1$ are cobordant, rather than merely \emph{whether} they are cobordant as in Thom's 1954 paper.

\subsection{Cobordism categories, first attempt}
\label{sec:first-attempt}

\begin{definition}
  Let $(W,\cin,\cout)$ and $(W',\cin',\cout')$ be two cobordisms between $M_0$ and $M_1$.  We say that these are \emph{diffeomorphic as cobordisms} if there exists a diffeomorphism $\phi: W \to W'$ of underlying smooth manifolds, such that there exists an $\epsilon > 0$ with $\phi(\cin(t,x)) = \cin'(t,x)$ for all $(t,x) \in [0,\epsilon] \times M_0$ and $\phi(\cout(t,x)) = \cout'(t,x)$ for all $(t,x) \in [-\epsilon,0] \times M_1$.
\end{definition}


\begin{definition}
  Let $W$ be a cobordism from $M_0$ to $M_1$ and $W'$ a cobordism from $M_1$ to $M_2$.  (Here we suppress the boundary collars from the notation, but emphasize that they are an important part of the structure.)  The \emph{composition} of $W$ and $W'$ is the cobordism $W'': M_0 \leadsto M_2$ constructed in the proof of transitivity in Lemma~\ref{lem:equiv-rel}.  We shall also write
  \begin{equation*}
    W \cup_{M_1} W'
  \end{equation*}
  for this cobordism.  (This is sloppy notation, since it depends on the collars.)
\end{definition}

\begin{lemma}
  Let $M_0$ and $M_1$ be closed $(d-1)$-manifolds and let $W_1, W_1': M_0 \leadsto M_1$ be two cobordisms between the same two manifolds, and similarly $W_2, W_2': M_1 \leadsto M_2$.  If $W_1$ and $W_1'$ are diffeomorphic as cobordisms and also $W_2$ and $W_2'$ are diffeomorphic as cobordisms, then the compositions $W_1 \cup_{M_1} W_2$ and $W_1' \cup_{M_1} W_2'$ are diffeomorphic as cobordisms.
\end{lemma}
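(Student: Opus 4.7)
The plan is to build the desired diffeomorphism $\phi \colon W_1 \cup_{M_1} W_2 \to W_1' \cup_{M_1} W_2'$ out of the given cobordism-diffeomorphisms $\phi_i \colon W_i \to W_i'$ by using $\phi_1$ on the image of $W_1$ and $\phi_2$ on the image of $W_2$. Let $\epsilon_i > 0$ be the collar tolerances guaranteed by the hypothesis that each $\phi_i$ is a diffeomorphism of cobordisms, and set $\epsilon = \min(\epsilon_1, \epsilon_2)$.

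First I would check that $\phi$ is well-defined on the quotient. The only points at which a choice could matter are the glued points $\cout(0,x) = \cin'(0,x)$ for $x \in M_1$. By hypothesis $\phi_1(\cout(0,x)) = \cout'(0,x)$ and $\phi_2(\cin(0,x)) = \cin''(0,x)$, and these two points are identified in $W_1' \cup_{M_1} W_2'$ by the very definition of the composed cobordism; so the two definitions of $\phi$ agree on the equivalence classes that get collapsed, and $\phi$ descends to a continuous bijection with continuous inverse defined analogously from $\phi_i^{-1}$.

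Next I would verify smoothness. Away from the glued hypersurface, $\phi$ is smooth because it is locally either $i' \circ \phi_1 \circ i^{-1}$ or (the analogous expression for $\phi_2$), both of which are smooth by hypothesis. Near a glued point $\cout(0,x) = \cin'(0,x)$, pick a chart $\psi \colon U \to M_1$ with $x \in \psi(U)$ and use the gluing chart
\begin{equation*}
  (t,u) \mapsto
  \begin{cases}
    \cout(t,\psi(u)) & t \leq 0\\
    \cin'(t,\psi(u)) & t \geq 0
  \end{cases}
\end{equation*}
on a neighborhood in $W_1 \cup_{M_1} W_2$, and the analogous gluing chart built from $\cout'$ and $\cin''$ on a neighborhood in $W_1' \cup_{M_1} W_2'$. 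Restricting to $|t| < \epsilon$, the collar-respecting property of $\phi_1$ and $\phi_2$ forces $\phi$ in these coordinates to be the \emph{identity} on $(-\epsilon,\epsilon) \times U$, which is trivially smooth. This is the one subtle step and the reason one builds the compatibility with collars into the definition of diffeomorphism of cobordisms in the first place; everything else is formal.

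Finally, to see that $\phi$ is itself a diffeomorphism \emph{of cobordisms}, one checks that on the incoming collar $[0,\min(\epsilon_1, \epsilon)) \times M_0 \subset W_1 \subset W_1 \cup_{M_1} W_2$ the map $\phi$ coincides with $\phi_1$ and hence matches the new incoming collar by the corresponding property of $\phi_1$; the outgoing collar is handled symmetrically with $\phi_2$. The main (and only) obstacle is the smoothness check at the gluing hypersurface, which as noted above dissolves because collar-preservation makes $\phi$ into the identity in the explicit gluing charts from the proof of Lemma~\ref{lem:equiv-rel}.
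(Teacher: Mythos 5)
Your proof is correct and follows exactly the route the paper indicates (the paper only sketches it in one line: glue the two diffeomorphisms of cobordisms and verify the result is again one). Your key observation—that collar-compatibility makes the glued map the identity in the gluing charts of Lemma~\ref{lem:equiv-rel} near the seam, so smoothness there is automatic—is precisely the verification the paper leaves to the reader.
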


\begin{lemma}
  Let $M_0$, $M_1$, $M_2$, and $M_3$ be closed smooth $(d-1)$-manifolds, and let $W_1: M_0 \leadsto M_1$, $W_2: M_1 \leadsto M_2$ and $W_3: M_2 \leadsto M_3$ be cobordisms.  Then the two cobordisms
  \begin{align*}
    (W_1 \cup_{M_1} W_2) \cup_{M_2} W_3: M_0 & \leadsto M_3\\
    W_1 \cup_{M_1} (W_2 \cup_{M_2} W_3): M_0 & \leadsto M_3
  \end{align*}
  are diffeomorphic as cobordisms.
\end{lemma}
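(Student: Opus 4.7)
The plan is to exhibit both iterated composites as the same quotient of $W_1 \amalg W_2 \amalg W_3$ and then verify that the smooth structures produced by the two orders of gluing coincide.

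First I would define a single equivalence relation $\sim$ on $W_1 \amalg W_2 \amalg W_3$, generated by $c_{\mathrm{out},1}(0,x) \sim c_{\mathrm{in},2}(0,x)$ for $x \in M_1$ and $c_{\mathrm{out},2}(0,y) \sim c_{\mathrm{in},3}(0,y)$ for $y \in M_2$. Using the universal property of the quotient topology twice, both iterated pushouts $(W_1 \cup_{M_1} W_2) \cup_{M_2} W_3$ and $W_1 \cup_{M_1} (W_2 \cup_{M_2} W_3)$ are canonically homeomorphic to $(W_1 \amalg W_2 \amalg W_3)/{\sim}$, in a way that commutes with the inclusions of each $W_i$. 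Composing these two canonical homeomorphisms with each other yields a homeomorphism $\phi$ between the two iterated composites, which by construction restricts to the identity on the image of each $W_i$.

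Next I would verify that $\phi$ is a diffeomorphism by comparing the two smooth structures chart by chart. The smooth structure on either iterated composite is specified (as in the proof of Lemma~\ref{lem:equiv-rel}) by two classes of charts: charts obtained from charts on some $W_i$, and the ``collar charts'' at each gluing locus, built out of the relevant $c_{\mathrm{out}}$ and $c_{\mathrm{in}}$. The first class is manifestly identical for the two orders of association because $\phi$ is the identity on each $W_i$. For the second class, I would observe that a collar chart near a point of $M_1 \hookrightarrow W''$ uses only the embeddings $c_{\mathrm{out},1}$ and $c_{\mathrm{in},2}$, and similarly a collar chart at $M_2$ uses only $c_{\mathrm{out},2}$ and $c_{\mathrm{in},3}$; thus the two gluings contribute charts independently. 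To make this precise one shrinks the collar parameter $\epsilon$ so that in $W_2$ the images of $c_{\mathrm{in},2}([0,\epsilon]\times M_1)$ and $c_{\mathrm{out},2}([-\epsilon,0]\times M_2)$ are disjoint, which is possible by compactness of $M_1$ and $M_2$ together with the disjointness of their images in $W_2$. With this choice, the defining charts near $M_1$ and the defining charts near $M_2$ are exactly the same in both iterated composites.

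Finally I would check that $\phi$ is a diffeomorphism \emph{of cobordisms}, i.e.\ that it intertwines the incoming and outgoing collars. Both iterated composites have incoming collar $i_1 \circ c_{\mathrm{in},1}$ and outgoing collar $i_3 \circ c_{\mathrm{out},3}$, where $i_1$ and $i_3$ are the canonical maps from $W_1$ and $W_3$ into the corresponding composite. Since $\phi$ is the identity on each $W_i$, this is automatic.

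The main obstacle is bookkeeping: the two iterated composites are genuinely different topological spaces (they are different quotients, even if canonically homeomorphic), and one must ensure that the smooth charts built on each side at the two gluing loci really do match, rather than just being ``compatible up to isotopy'' of collars. The crucial ingredient is that the $M_1$- and $M_2$-collars live in disjoint parts of $W_2$, so each gluing contributes its charts independently, and that $\phi$ is the identity on each piece $W_i$ by design.
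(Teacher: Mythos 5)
Your proposal is correct and matches the paper's (very brief) argument, which simply notes that one verifies the canonical map of underlying sets to be a diffeomorphism of cobordisms; your triple-quotient description and chart-by-chart check are just that verification spelled out. The only cosmetic remark is that no shrinking of $\epsilon$ is needed to separate the two collars in $W_2$: the definition of a cobordism already requires $c_{\mathrm{in},2}$ and $c_{\mathrm{out},2}$ to have disjoint images.
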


The first lemma is proved by gluing together two diffeomorphisms of cobordisms and verifying that the result is a diffeomorphism of cobordisms, the second by verifying that the canonical map of underlying sets is a diffeomorphism of cobordisms.  Let us remark that the two cobordisms $(W_1 \cup_{M_1} W_2) \cup_{M_2} W_3$ and $W_1 \cup_{M_1} (W_2 \cup_{M_2} W_3)$ are likely not \emph{equal} (their underlying sets aren't equal).

\newcommand{\Cob}{\mathrm{Cob}}
\begin{definition}\label{def:composition}
  For closed $(d-1)$-manifolds $M_0$ and $M_1$, let $\Cob_d(M_0,M_1)$ denote the set of equivalence classes of cobordisms from $M_0$ to $M_1$, up to diffeomorphism of cobordisms.  The induced operation
  \begin{align*}
    \Cob_d(M_0,M_1) \times \Cob_d(M_1,M_2) &\to \Cob_d(M_0,M_2)\\
    ([W],[W']) \quad\quad\quad\quad& \mapsto [W \cup_{M_1} W']
  \end{align*}
  is then well defined and associative, by the two lemmas above.
\end{definition}

An easy argument shows that it is also \emph{unital}: in fact the ``cylinder'' $[0,1] \times M$ gives rise to an identity element in $\Cob_d(M,M)$.

\begin{definition}
  Let $\Cob_d$ be the category whose objects are closed smooth $(d-1)$-manifolds, whose morphism set $M_0 \to M_1$ is $\Cob_d(M_0,M_1)$, and whose composition is as in Definition~\ref{def:composition}.
\end{definition}

We will later enhance this definition to one which for many purposes turns out to be more natural.  The enhanced definition has more structure (it is not just a plain category), and therefore at first sight will seem more complicated.  In the rest of this lecture we shall stick to the elementary definition above.

\begin{remark}
  A \emph{small category} is one whose collections of objects and morphisms form sets.  For example, the category of sets is not small because there is no ``set of all sets''.

  As it stands, the category $\Cob_d$ defined above is not small for $d \geq 0$.  The problem is that there are too many possibilities for the underlying set of the manifolds.  This issue may be handled in multiple ways, for example as follows.  For any set $\Omega$, let $\Cob_d^\Omega$ be the subcategory of $\Cob_d$ in which the manifolds $M$ (objects) and $W$ (whose diffeomorphism class gives a morphism) are required to have underlying set a subset of $\Omega$.  Then $\Cob_d^\Omega$ is a small category, and, if the cardinality of $\Omega$ is at least that of $\R$ then the inclusion $\Cob_d^\Omega \hookrightarrow \Cob_d$ is an equivalence of categories.  If $\Omega'$ is another such set, then the small categories $\Cob_d^\Omega$ and $\Cob_d^{\Omega'}$ are \emph{equivalent}, in fact both inclusions of subcategories
  \begin{equation*}
    \Cob_d^{\Omega} \hookrightarrow \Cob_d^{\Omega \cup \Omega'} \hookleftarrow \Cob_d^{\Omega'}
  \end{equation*}
  are equivalences of categories.

  We shall not dwell further on this issue.  To be definite, let's say we choose once and for all a set $\Omega$ of sufficiently large cardinality and henceforth by abuse of notation write $\Cob_d$ for the small category $\Cob_d^\Omega$.
\end{remark}

\subsection{Categories, groupoids, and spaces}
\label{sec:categories-and-groupoids}

For $p \geq 0$, the set $[p] = \{0, \dots, p\}$ is totally ordered by the usual ordering of natural numbers, and may be considered the objects of a category, whose morphism set $i \to j$ is empty if $i > j$ and a singleton if $i \leq j$.  If $C$ is any small category, the set of all functors $f: [p] \to C$ may be identified with the set
\begin{equation*}
  N_pC = \{(f_1, \dots, f_p) \mid \text{target$(f_i)$ = source$(f_{i+1})$ for all $i$}\}
\end{equation*}
of composable $p$-tuples of morphisms: $f_1$ is given by the value of $f: [p] \to C$ on the unique morphism $0 \to 1$, $f_2$ the value on the unique morphism $1 \to 2$, etc.  

\begin{definition}
  The \emph{classifying space} of a small category $C$ is the space
  \begin{equation*}
    BC = |NC|,
  \end{equation*}
  the geometric realization of its nerve.
\end{definition}

More explicitly, $BC$ is the CW complex with one 0-cell for each object of $C$; one 1-cell for each non-identity morphism $f: x \to y$ in $C$, with one end attached to the 0-cell $x$ and the other end to $y$; one 2-cell for each pair $(f,g)$ of composable non-identity morphisms, etc.

In particular, the set $N_0C = \mathrm{Ob}(C)$, regarded as a topological space in the discrete topology, canonically sits inside $BC$ via
\begin{equation*}
  N_0C \hookrightarrow BC,
\end{equation*}
the inclusion of 0-simplices.

Let us briefly summarize some convenient properties of this construction; see e.g.\ \cite{SegalNerve}.
\begin{itemize}
\item Functoriality $\mathrm{Cat} \to \mathrm{Top}$: a functor $F: C \to D$ is sent to a continuous map $BF: BC \to BC$.
\item A natural transformation $T: F \rightarrow G$ between functors $C \to D$ induces a homotopy between the maps $BF, BG: BC \to BD$.
\item As a consequence, a functor $F: C \to D$ admitting a left or a right adjoint is sent to a homotopy equivalence.  In particular, equivalences of categories are sent to homotopy equivalences.  Likewise, $BC$ is contractible if $C$ admits a terminal or an initial object.
\item Products: the canonical map $B(C \times D) \to BC \times BD$ is a homeomorphism\footnote{Fine print: in this statement the product of $BC$ and $BD$ should be formed in compactly generated spaces.  Otherwise we can only say it is a continuous bijection, although still a weak equivalence.}.
\end{itemize}

An overarching theme in these lectures is the question of how much information about $C$ is retained by the space $BC$.

\begin{example}\label{ex:groupoid}
  For $c_0 \in \mathrm{Ob}(C)$, let $C_{/c_0}$ be the \emph{over category}: its objects are morphisms $f: c \to c_0$ in $C$ and its morphisms $(f: c \to c_0) \to (f': c' \to c_0)$ are morphisms $g: c \to c'$ in $C$ such that $f = f' \circ g$.  There is a forgetful functor $C_{/c_0} \to C$ which induces a map of spaces
  \begin{equation*}
    u: BC_{/c_0} \to BC.
  \end{equation*}
  It is not hard to show that $BC_{/c_0}$ is contractible (it has the terminal object $\mathrm{id}: c_0 \to c_0$, and any category with a terminal object has contractible classifying space).

  If in addition $C$ is a \emph{groupoid}, i.e.\ all morphisms are invertible, then the map $u$ can be shown to be a covering map, and it is not hard to use this to construct an isomorphism of groups
  \begin{equation*}
    \mathrm{Aut}_C(c_0) \to \pi_1(BC,c_0).
  \end{equation*}
  Since all path components of $BC$ admit a contractible covering space, all higher homotopy groups vanish.  Thus we have completely understood the homotopy type of $BC$ when $C$ is a groupoid: it is a disjoint union of $K(\pi,1)$'s, one for each isomorphism class in $C$.
\end{example}

Taking classifying space turns a category into a space.  One way to go in the other direction is the \emph{fundamental groupoid}.
\begin{definition}
  For a space $X$, let $\pi_1(X)$ denote\footnote{Other commonly used notation for the fundamental groupoid is $\Pi_1(X)$ or $\pi_{\leq 1}(X)$.} the category whose objects are the points of $x$, whose morphisms $x \to y$ are homotopy classes of paths $[0,1] \to X$ starting at $x$ and ending at $y$, up to homotopy relative to the endpoint, and  whose composition is induced by concatenation of paths.

  For any subset $A \subset X$ we write $\pi_1(X,A)$ for the full subcategory of $\pi_1(X)$ whose objects are the points in $X$.
\end{definition}
As the name indicates, the category $\pi_1(X)$ is in fact a groupoid.  It simultaneously encodes the fundamental groups $\pi_1(X,x)$ for all $x \in X$ and the change-of-basepoint isomorphisms $\pi_1(X,x) \to \pi_1(X,x')$ induced by paths between $x$ and $x'$.
\begin{lemma}
  Let $C$ be a small category and let
  \begin{equation*}
    \gamma: C \to \pi_1(BC)
  \end{equation*}
  send an object $x \in C$ to the corresponding 0-simplex in $BC$, regarded as an object of $\pi_1(BC)$, and a morphism $f: x \to y$ to the homotopy class represented by the 1-simplex corresponding to $f: x \to y$ (and the constant path if $f$ is an identity morphism).  Then $\gamma$ is a (well defined) functor.
\end{lemma}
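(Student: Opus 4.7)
The plan is to verify the three functor axioms in turn: that $\gamma$ is well defined on morphisms, that it preserves identities, and that it preserves composition. The first two items are essentially unpacking definitions, while composition is where the argument really lives.

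First I would check that each non-identity morphism $f \colon x \to y$ in $C$ does give a well-defined morphism $\gamma(f) \colon x \to y$ in $\pi_1(BC)$. By construction of $BC$ as the CW complex described above, the 1-cell attached to $f$ has its two endpoints on the 0-cells $x$ and $y$; fixing the standard affine parametrization of the 1-simplex by $[0,1]$, this gives a specific path $\sigma_f \colon [0,1] \to BC$ from $x$ to $y$, whose homotopy class rel endpoints is $\gamma(f)$. Identity morphisms are sent to constant paths by definition, so there is nothing to check for the identity axiom beyond noting that these are the identities in $\pi_1(BC)$.

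The heart of the proof is composition. Given composable morphisms $f \colon x \to y$ and $g \colon y \to z$, the composable pair $(f,g)$ is an element of $N_2 C$, so it determines a 2-simplex in $BC$, i.e.\ a continuous map $\Delta^2 \to BC$. Under the standard identification of the three edges $\{0<1\}$, $\{1<2\}$, $\{0<2\}$ of $\Delta^2$ with $[0,1]$, this 2-simplex restricts on the boundary to $\sigma_f$, $\sigma_g$, and $\sigma_{g\circ f}$ respectively. Viewing $\Delta^2$ as the mapping cylinder of the reparametrization $[0,1] \to [0,1]$ that collapses the first half to the vertex $1$ and identifies the second half with the whole interval, the 2-simplex furnishes an explicit homotopy rel endpoints from the concatenation $\sigma_f \ast \sigma_g$ to $\sigma_{g \circ f}$. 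In $\pi_1(BC)$ this is exactly the relation $\gamma(g) \circ \gamma(f) = \gamma(g \circ f)$.

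The only subtlety is handling the degenerate cases where one of $f$, $g$, or $g \circ f$ is an identity. If $f = \id_x$, the 2-simplex $(f,g)$ is the degeneracy $s_0(g)$, which collapses in $|NC|$ to the edge $\sigma_g$; this yields $\gamma(\id_x) \ast \gamma(g) = \gamma(g)$ after noting that the degenerate 1-simplex $(\id_x)$ likewise collapses to the constant path at $x$. The case $g = \id_y$ is symmetric via the degeneracy $s_1(f)$. The remaining case is that $f$ and $g$ are both non-identity while $g \circ f$ is an identity; here the 2-simplex is genuinely non-degenerate and the argument of the previous paragraph applies unchanged, now exhibiting a null-homotopy of $\sigma_f \ast \sigma_g$. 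The main obstacle is just the bookkeeping of these degenerate cases against the convention that identity morphisms contribute no 1-cells; the geometric content, namely that a 2-simplex is a homotopy between its boundary edges, is standard.
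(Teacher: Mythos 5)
Your proof is correct and follows the same route as the paper: the essential point in both is that the 2-simplex attached to the composable pair $(f,g)\in N_2C$ provides a homotopy rel endpoints between the concatenation of the edge-paths for $f$ and $g$ and the edge-path for $g\circ f$. Your extra bookkeeping for the degenerate cases (identities collapsing to constant paths) just fills in details the paper's sketch leaves implicit.
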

\begin{proof}[Proof sketch]
  We must prove that $\gamma$ preserves composition.  While the 1-simplex corresponding to a composition $h = f \circ g$ may not be \emph{equal} to the concatenation of the paths corresponding to $f$ and $g$, the 2-simplex corresponding to the pair $(f,g) \in N_2C$ gives rise to a homotopy relative to end-points.
\end{proof}

If $C$ is a groupoid it is not hard to show, as in Example~\ref{ex:groupoid} above, that $\gamma$ is an equivalence of categories.  In fact, if we keep track of the subset $N_0C \subset BC$ we may reconstruct $C$ up to isomorphism of groupoids, as follows.
\begin{proposition}\label{prop:groupoid-case}
  If $C$ is a groupoid, then $\gamma: C \to \pi_1(BC)$ is an equivalence of categories.  In fact it is an isomorphism of categories onto $\pi_1(BC,N_0C)$, the full subcategory of $\pi_1(BC)$ on the objects $N_0C \subset BC$.\qed
\end{proposition}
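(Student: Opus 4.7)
The plan is to prove the second, stronger statement: that $\gamma$ is an isomorphism of categories onto the full subcategory $\pi_1(BC, N_0C) \subset \pi_1(BC)$; the equivalence of categories then follows because every 0-simplex in $BC$ lies in $N_0C$ by definition, so the inclusion $\pi_1(BC, N_0C) \hookrightarrow \pi_1(BC)$ is already essentially surjective (and in fact an equivalence). By construction $\gamma$ sends $\Ob(C)$ bijectively onto $N_0C$, so the task reduces to showing that for every pair $x, y \in \Ob(C)$ the map
\begin{equation*}
\Hom_C(x,y) \longrightarrow \pi_1(BC)(x,y)
\end{equation*}
induced by $\gamma$ is a bijection.

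To construct an inverse, I would use the covering-space input from Example~\ref{ex:groupoid}, applied to the object $c_0 = y$. The over category $C_{/y}$ has contractible classifying space (it has a terminal object $\id_y$), and, because $C$ is a groupoid, the forgetful functor induces a covering map $u \colon BC_{/y} \to BC$. The fibre of $u$ over the 0-simplex $x \in N_0C$ is canonically the set $\Hom_C(x,y)$, realized as 0-simplices of the form $(x, f\colon x \to y) \in N_0(C_{/y})$. Given a path $\alpha \colon [0,1] \to BC$ representing an element of $\pi_1(BC)(x,y)$, I lift it uniquely to a path $\tilde\alpha$ in $BC_{/y}$ ending at $(y, \id_y)$; its initial endpoint is some $(x, f) \in N_0(C_{/y})$, and I define $\delta([\alpha]) = f$. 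Because $u$ is a covering map, homotopic paths rel endpoints lift to homotopic lifts, so $\delta$ is well defined.

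To see that $\delta$ inverts $\gamma$, observe that for any morphism $f \colon x \to y$ in $C$ the 1-simplex in $BC$ representing $\gamma(f)$ is the $u$-image of the 1-simplex in $BC_{/y}$ corresponding to the morphism $f \colon (x,f) \to (y, \id_y)$ in $C_{/y}$. This latter 1-simplex is precisely the canonical lift used in defining $\delta$, so $\delta(\gamma(f)) = f$. Conversely, given a path $\alpha$ from $x$ to $y$, the canonical lift $\tilde\alpha$ is a path in the contractible space $BC_{/y}$ from $(x, \delta[\alpha])$ to $(y, \id_y)$, and is therefore homotopic (rel endpoints) to the 1-simplex in $BC_{/y}$ associated with $\delta[\alpha] \colon (x, \delta[\alpha]) \to (y, \id_y)$; applying $u$ gives $[\alpha] = \gamma(\delta[\alpha])$.

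The only nontrivial ingredient is the covering-map property of $u$, and this is exactly what was already invoked in Example~\ref{ex:groupoid}; once that is in hand, the rest of the argument is the standard monodromy calculation, adapted from the automorphism case $x = y$ to general pairs of objects by choosing $(y, \id_y)$ as the canonical basepoint in the fibre over $y$.
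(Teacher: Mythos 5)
Your proof is correct and follows essentially the route the paper intends: the Proposition is stated with reference to Example~\ref{ex:groupoid}, i.e.\ the covering-space argument with $BC_{/y} \to BC$ and the contractibility of $BC_{/y}$, which is exactly what you carry out, extended from automorphism groups to general hom-sets by lifting paths to end at $(y,\mathrm{id}_y)$. The only detail worth noting is that essential surjectivity of $\pi_1(BC,N_0C) \hookrightarrow \pi_1(BC)$ rests on every point of the CW complex $BC$ being joined by a path to a vertex in $N_0C$, not merely on the vertices lying in $N_0C$, but this is immediate.
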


In fact $\gamma$ is an equivalence of categories if and only if  $C$ is a groupoid, since $\pi_1(BC)$ is always a groupoid.  In general we have the following universal property, expressing that $\gamma$ is the ``universal functor to a groupoid''.
\begin{proposition}
  The functor
  \begin{equation*}
    \gamma: C \to \pi_1(BC,N_0C)
  \end{equation*}
  is the universal groupoid under $C$: for any groupoid $D$ and functor $F: C \to D$, there is a unique functor $G: \pi_1(BC,N_0C) \to D$ such that $F = G \circ \gamma$.
\end{proposition}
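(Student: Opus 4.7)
The plan is to exploit the fact that the fundamental groupoid of a CW complex depends only on its $2$-skeleton, and that the $2$-skeleton of $BC$ has a very explicit cellular structure directly encoded by $C$. First I would record (or import) the standard presentation: every morphism in $\pi_1(BC,N_0C)$ is represented by an edge-path in the $1$-skeleton of $BC$, and two such edge-paths represent the same morphism if and only if they differ by a finite sequence of elementary moves, namely (i) insertion or deletion of a back-and-forth traversal of a single $1$-cell, and (ii) replacement, along the boundary of a $2$-cell, of a pair of consecutive edges by the third edge of that $2$-cell. By construction of the nerve, each $1$-cell corresponds to a non-identity morphism $f\in\Mor(C)$, and each $2$-cell corresponds to a composable pair $(f,g)\in N_2C$, with boundary edges the three morphisms $f$, $g$, and $g\circ f$.

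Given this presentation, I would define $G$ as follows. On objects, $\Ob(\pi_1(BC,N_0C))=N_0C=\Ob(C)$, and the equation $F=G\circ\gamma$ forces $G(x)=F(x)$. On a morphism of $\pi_1(BC,N_0C)$ represented by an edge-path $e_1^{\epsilon_1}\cdots e_n^{\epsilon_n}$, where each $e_i$ is the $1$-cell corresponding to some $f_i\in\Mor(C)$ and $\epsilon_i\in\{\pm 1\}$ records the direction of traversal, set
\begin{equation*}
    G([e_1^{\epsilon_1}\cdots e_n^{\epsilon_n}])=F(f_n)^{\epsilon_n}\circ\cdots\circ F(f_1)^{\epsilon_1},
\end{equation*}
where the inverses exist because $D$ is a groupoid. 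Invariance under move (i) is immediate, since a cancelling pair contributes $F(f)^{\pm 1}F(f)^{\mp 1}=\id$ in $D$. Invariance under move (ii) is exactly the statement that $F$ preserves composition: the $2$-cell for $(f,g)\in N_2C$ imposes the relation $\gamma(g\circ f)=\gamma(g)\gamma(f)$, and applying $F$ gives $F(g\circ f)=F(g)\circ F(f)$, which holds since $F$ is a functor. Degenerate simplices (identity morphisms of $C$) give rise to constant paths and so are sent to identities in $D$. The identity $F=G\circ\gamma$ holds by inspection, since $\gamma(f)$ is represented by the single forward edge $e_f$.

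For uniqueness, note that every morphism in $\pi_1(BC,N_0C)$ is, by the edge-path representation, a composition of morphisms of the form $\gamma(f)$ and their formal inverses; since $G$ must satisfy $G\circ\gamma=F$, and $G(\gamma(f)^{-1})$ is forced to be $F(f)^{-1}$ because $G$ is a functor into a groupoid, the value of $G$ on every morphism is determined. The main obstacle is really just the input presentation of $\pi_1(BC,N_0C)$ in terms of $1$- and $2$-cells of $BC$; this is a standard consequence of the van Kampen theorem (or of cellular approximation together with the fact that $\pi_1$ depends only on the $2$-skeleton) and I would cite it rather than reprove it, after which the construction and verification above are essentially formal.
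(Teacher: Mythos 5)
Your argument is correct, but your existence step takes a genuinely different route from the paper. The paper obtains $G$ essentially for free from its earlier Proposition~\ref{prop:groupoid-case}: since $D \to \pi_1(BD,N_0D)$ is an \emph{isomorphism} of categories when $D$ is a groupoid, one simply sets $G$ to be $\pi_1(BF)\colon \pi_1(BC,N_0C) \to \pi_1(BD,N_0D)$ followed by the inverse of that isomorphism; no presentation of $\pi_1(BC,N_0C)$ by generators and relations is needed, and the only combinatorial input is the one already used for uniqueness (every path is homotopic rel endpoints into the $1$-skeleton). You instead construct $G$ directly from the edge-path description of the fundamental groupoid of the $2$-skeleton of $BC$, checking well-definedness against the two elementary moves, with the $2$-cell relation $\gamma(g\circ f)=\gamma(g)\gamma(f)$ matching functoriality of $F$; your handling of degenerate edges (composites that are identities give collapsed third edges, hence relations $\gamma(g)\gamma(f)=\mathrm{id}$, which $F$ also satisfies) is the one point that needs the care you gave it. The trade-off: the paper's proof is shorter and recycles prior results but leans on the unproved-in-detail Proposition~\ref{prop:groupoid-case}, while yours is more self-contained at the morphism level but imports the full van Kampen/edge-path presentation --- a strictly stronger input than the $1$-skeleton approximation the paper's uniqueness argument uses, since you need the $2$-cell relations to be \emph{sufficient}, not just valid. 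Your uniqueness argument coincides with the paper's.
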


\begin{proof}[Proof sketch]
  We first explain uniqueness.  Since $\gamma$ is a bijection of object sets it is fixed what the functor $G$ must do on objects.  A morphism in $\pi_1(BC,N_0C)$ is represented by a path $\lambda: [0,1] \to BC$ starting and ending in $N_0C \subset BC$, and any such path is homotopic relative to its endpoints to a path running entirely in the 1-skeleton of $BC$.  Since the 1-simplices of $BC$ are (non-identity) morphisms in $C$, any morphism in $\pi_1(BC,N_0 C)$ may be written as a finite composition of the form
  \begin{equation*}
    (\gamma(\phi_1))^{\epsilon_1} \dots (\gamma(\phi_n))^{\epsilon_n},
  \end{equation*}
  where $\epsilon_i \in \{\pm 1\}$ and $\phi_i$ are morphisms in $C$.  The requirement $F = G \circ \gamma$ forces $G$ to take this element to $(F(\phi_1))^{\epsilon_1} \dots (F(\phi_n))^{\epsilon_n}$.

  To see existence, the easiest is to use proposition~\ref{prop:groupoid-case}: define $G: \pi_1(BC,N_0 C) \to D$ as the inverse of the isomorphism $D \to \pi_1(BD,N_0 D)$, composed with $\pi_1(BF): \pi_1(BC,N_0 C) \to \pi_1(BD,N_0 D)$.
\end{proof}
As with any universal property, $\pi_1(BC,N_0C)$ is determined up to unique isomorphism of categories by this proposition.  It therefore also determines $\pi_1(BC)$ up to canonical equivalence of categories (since the inclusion $\pi_1(BC,N_0C) \hookrightarrow \pi_1(BC)$ is an equivalence).

\begin{remark}
  The natural transformation $\gamma: C \to \pi_1(BC,N_0C)$ is a special case of \emph{localization of categories}.  More generally, given a small category $C$ and a subcategory $S \subset C$, there exists a functor $\gamma: C \to C[S^{-1}]$ which is a bijection on object sets, sends any morphism in $S$ to an isomorphism in $C[S^{-1}]$, and is initial with this property (i.e.\ if $f: C \to D$ is a functor sending morphisms in $S$ to isomorphisms, then it factors as $f = g \circ \gamma$ for a unique $g: C[S^{-1}] \to D$).  The universal functor which sends all morphisms to isomorphisms is then $C \to C[C^{-1}]$, so the result above may be summarized as an isomorphism of categories
  \begin{equation*}
    C[C^{-1}] \to \pi_1(BC,N_0C)
  \end{equation*}
  and hence an equivalence of categories
  \begin{equation*}
    C[C^{-1}] \xrightarrow{\simeq} \pi_1(BC).
  \end{equation*}
\end{remark}

\begin{remark}
  We have explained that, up to equivalence of categories, no information is lost in the passage $C \mapsto BC$, as long as $C$ is a groupoid.

  If $C$ is not a groupoid, the passage from $C$ to $BC$ should be expected to lose lots of information.  For example, if $C$ is the two-object category with object set $\{0,1\}$, one morphism $0 \to 1$, and no other non-identity morphisms, then $BC$ is homeomorphic to the interval $[0,1]$.  If $C' = \{0\} \hookrightarrow C$ is the inclusion of the 1-object category, then $BC' \to BC$ is a homotopy equivalence (namely the inclusion $\{0 \} \subset [0,1]$), even though $C' \to C$ is not an equivalence of categories.

  More examples of this flavor are easily constructed.  For example, to any simplicial complex $X$ there is an associated ``poset of simplices'' of $X$, regarded as a category (i.e.\ $\mathrm{Hom}(\sigma,\tau)$ is a singleton if $\sigma$ is a face of $\tau$ and empty otherwise).  Then it is a standard result that $BC$ is homeomorphic to $|X|$.
\end{remark}



\subsection{Invertible field theories (poor man's version)}
\label{sec:invertible}

In light of what we have discussed so far, understanding functors from a small category $C$ into groupoids is essentially the same thing as understanding the universal such $C \to C[C^{-1}]$.  As we have seen, this is the same as understanding the fundamental groupoid of $BC$.  If we want an up-to-isomorphism-of-categories answer we should also keep track of the subset $N_0 C \subset BC$, but for an up-to-equivalence-of-categories answer we only need the space $BC$ itself, and then we only need it up to homotopy equivalence.  In fact we need less than the homotopy type of $BC$, we only need to understand the space $BC$ up to maps that induce equivalence of fundamental groupoids (i.e.\ the ``1-type'' of $BC$).  For example, if we can find a space $X$ and a 2-connected map $X \to BC$ (meaning all homotopy fibers are simply connected), then the induced functor $\pi_1(X) \to \pi_1(BC)$ is an equivalence.  Let us spell out this  observation.
\begin{corollary}
  Let $C$ be a small category and let $f: X \to BC$ be a 2-connected map.  Then the functors
  \begin{equation*}
    C \xrightarrow{\gamma} \pi_1(BC) \xleftarrow{\pi_1(f)} \pi_1(X)
  \end{equation*}
  induce, after choosing an inverse to the equivalence $\pi_1(f): \pi_1(X) \to \pi_1(BC)$, an equivalence of categories
  \begin{equation*}
    C[C^{-1}] \to \pi_1(X)
  \end{equation*}
  and for any small groupoid $D$ it induces an equivalence
  \begin{equation*}
    \mathrm{Fun}(\pi_1(X),D) \to \mathrm{Fun}(C,D).
  \end{equation*}
  In turn, these groupoids are also canonically equivalent to the fundamental groupoids of the mapping spaces $\Map(X,BD) \simeq \Map(BC,BD)$, at least if $X$ is a CW complex.
\end{corollary}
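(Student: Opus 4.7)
The plan is to assemble the corollary from the preceding universal property of the localization $\gamma: C \to C[C^{-1}] \simeq \pi_1(BC, N_0C)$, together with standard facts about mapping spaces into homotopy $1$-types.

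First I would upgrade the $2$-connectedness of $f: X \to BC$ to an equivalence of fundamental groupoids. Since every homotopy fiber is simply connected, the long exact sequence of homotopy groups shows that $f$ induces a bijection on $\pi_0$ and an isomorphism on $\pi_1$ at each basepoint, which is exactly the condition that $\pi_1(f): \pi_1(X) \to \pi_1(BC)$ is an equivalence of groupoids. Composing a quasi-inverse of $\pi_1(f)$ with the equivalence $C[C^{-1}] \xrightarrow{\simeq} \pi_1(BC)$ from the preceding remark yields the first claimed equivalence. Applying $\mathrm{Fun}(-, D)$ (which preserves equivalences of categories) and invoking the universal property of localization --- which gives an isomorphism of categories $\mathrm{Fun}(C[C^{-1}], D) \cong \mathrm{Fun}(C, D)$ whenever $D$ is a groupoid, since both functors and natural transformations out of $C[C^{-1}]$ are determined by their restrictions along $\gamma$ --- produces the second claimed equivalence.

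For the mapping space claim, I would first observe that since $D$ is a groupoid, $BD$ is by Example~\ref{ex:groupoid} a disjoint union of $K(\pi, 1)$'s, hence a homotopy $1$-type. For any $1$-type $Y$ and any $2$-connected map of CW complexes $Z \to Z'$, the induced map $\Map(Z', Y) \to \Map(Z, Y)$ is a weak equivalence --- this follows from the Postnikov tower of the mapping space, or directly from the standard classification of maps into $K(\pi, 1)$'s. Applied to $f$ this gives $\Map(BC, BD) \xrightarrow{\simeq} \Map(X, BD)$.

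The main obstacle is the final identification of $\pi_1$ of these mapping spaces with the functor groupoid $\mathrm{Fun}(\pi_1(X), D)$. The natural comparison functor sends $\phi: Z \to BD$ to the composite $\pi_1(Z) \xrightarrow{\pi_1(\phi)} \pi_1(BD) \xleftarrow{\simeq} D$, and sends a homotopy $[0,1] \times Z \to BD$ to the natural isomorphism it induces via the product decomposition of $\pi_1$. That this is an equivalence of groupoids is a standard consequence of the Eilenberg--MacLane classification: on $\pi_0$ it reduces to $[Z, K(\pi, 1)] \cong \Hom(\pi_1 Z, \pi) / \mathrm{conj}$, and on $\pi_1$-based components it identifies $\pi_1(\Map(Z, K(\pi, 1)), \phi)$ with the centralizer of $\phi_*(\pi_1 Z)$ in $\pi$. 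Assembling these identifications with the equivalences established above completes the proof.
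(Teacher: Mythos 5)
Your proposal is correct and follows exactly the route the paper intends: the paper states this corollary without proof, as a spelling-out of the preceding results (2-connected maps induce equivalences of fundamental groupoids, the universal property of $\gamma: C \to C[C^{-1}] \cong \pi_1(BC,N_0C)$, and standard obstruction-theoretic facts about mapping into the $1$-type $BD$), and your argument fills in precisely those details, including the upgrade of the universal property to an isomorphism of functor categories and the Eilenberg--MacLane identification of $\pi_1\Map(Z,BD)$ with $\mathrm{Fun}(\pi_1(Z),D)$.
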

  
We shall be interested in the case $C = \Cob_d$.  In this case functors from $\Cob_d$ to groupoids may be regarded as an approximation to the notion of \emph{invertible field theories} with values in the groupoid $D$.  The actual definition involves\footnote{Depending on who you ask, it may likely also involve higher categories.} symmetric monoidal functors into rigid symmetric monoidal groupoids, as we shall explain later in the week.  The approach we shall take, namely to find a 2-connected map from an ``understandable'' space, also makes sense in the simpler case of plain categories and groupoids, and it seems instructive to discuss this setting first.

\section*{Lecture 2: Topologically enriched (cobordism) categories}\refstepcounter{section}
\label{sec:enriched}
\addcontentsline{toc}{section}{Lecture 2: Topologically enriched (cobordism) categories}

Recall that a (small) \emph{topologically enriched} category consists of a category $C$ and a specified topology on each morphism set $C(x,y)$ such that the composition map $C(x,y) \times C(y,z) \to C(x,z)$ is continuous\footnote{For technical reasons it is probably better to require all morphism spaces $C(x,y)$ to be \emph{compactly generated} topological spaces.  If this is not already the case for $C$, it may be achieved by applying $k$-ification to each morphism space.
  Similarly, the domain of the composition map should be the product in compactly generated topological spaces.  In these notes we will gloss over this and many other details.}
  for all $x$, $y$, and $z$.
The underlying (unenriched) category has the same object set, morphism set $x \to y$ given by $C(x,y)^\delta$, the underlying set of $C(x,y)$, and composition as in $C$.


A \emph{topologically enriched} (also known as \emph{continuous}) functor $F: C \to D$ between topologically enriched categories is a functor of underlying categories with the property that the induced map of morphism spaces is continuous.


\subsection{Topologically enriched cobordism categories}

In this section we shall define a version of the cobordism category $\Cob_d$ which is a topologically enriched category.  It is very similar in spirit to $\Cob_d$, except that the objects and morphisms are now \emph{submanifolds} of a fixed vector space, instead of being abstract manifolds.

If $V$ is a finite-dimensional real vector space and $M \subset V$ is a subset, then being a $d$-dimensional submanifold (without boundary) is a \emph{property} of the subset: it is the condition that it locally be the inverse image of a regular value of a smooth function to $\R^n$ for $n = \dim(V) - d$.

For $t > 0$ we shall say that a compact subset $W \subset [0,t] \times V$ is a \emph{cobordism} between two compact submanifolds $M_0 \subset V$ and $M_1 \subset V$, provided there exists an $\epsilon > 0$ such that
\begin{align*}
  W \cap ([0,\epsilon] \times V) & = [0,\epsilon] \times M_0\\
  W \cap ([t-\epsilon,t] \times V) & = [t-\epsilon,t] \times M_1,
\end{align*}
and such that the subset
\begin{equation*}
  \widehat{W} = \big((-\infty,0] \times M_0 \big) \cup W \cup \big( [t,\infty) \times M_1\big)
\end{equation*}
is a smooth submanifold of $\R \times V$.

\begin{definition}
  For a finite-dimensional real vector space $V$, the objects of $\mathcal{C}_d^V$ are the compact subsets $M \subset V$ which are smooth $(d-1)$-dimensional submanifolds.  Morphisms $M_0 \leadsto M_1$ are pairs $(t,W)$ where $t > 0$ and $W \subset [0,t] \times V$ is a cobordism in the sense above.  Composition is defined as
  \begin{equation*}
    (t_1,W_1) \circ (t_2, W_2) = (t_1 + t_2,W_1 \cup (t_1 + W_2)),
  \end{equation*}
  where $t_1 + W_2 = \{(t_1,0) + w \mid w \in W_2\}$ is the parallel translate of $W_2$.
\end{definition}
\begin{lemma}
  The composition defined above is well defined, and makes $\mathcal{C}_d^V$ into a (small, non-unital
  category.
\end{lemma}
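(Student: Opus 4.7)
The plan is to verify in turn that composition is well defined, then that the resulting structure is small, non-unital, and associative.

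First I would check that $(t_1+t_2,\,W_1\cup(t_1+W_2))$ is a cobordism from $M_0$ to $M_2$ in the sense of the definition preceding the lemma. Compactness is immediate: it is the union of two compact subsets of $[0,t_1+t_2]\times V$. The collar conditions at the two ends are inherited separately: if $W_1$ collars at $0$ with some $\epsilon_0>0$, then for any $\epsilon<\min(\epsilon_0,t_1)$ the set $(t_1+W_2)$ does not meet $[0,\epsilon]\times V$, so $W\cap([0,\epsilon]\times V)=[0,\epsilon]\times M_0$; symmetrically at the top. The key step is to verify that
\[
\widehat{W} = \bigl((-\infty,0]\times M_0\bigr)\cup W\cup\bigl([t_1+t_2,\infty)\times M_2\bigr)
\]
is a smooth submanifold of $\R\times V$. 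Away from the height $t_1$, this is clear: it locally coincides with $\widehat{W_1}$ (on points with first coordinate $<t_1$) or with the translate $t_1+\widehat{W_2}$ (on points with first coordinate $>t_1$), each of which is smooth by hypothesis. Near a point $(t_1,x)$ with $x\in M_1$, the top collar of $W_1$ gives $\widehat{W_1}\cap((t_1-\epsilon,\infty)\times V)=(t_1-\epsilon,\infty)\times M_1$ for some $\epsilon>0$, and the bottom collar of $W_2$ gives $t_1+\widehat{W_2}\cap((-\infty,t_1+\epsilon')\times V)=(-\infty,t_1+\epsilon')\times M_1$. Taking their union on the overlap produces the open neighborhood $(t_1-\epsilon,t_1+\epsilon')\times M_1$ of $(t_1,x)$ in $\widehat{W}$, which is manifestly a smooth submanifold of $\R\times V$. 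Thus $\widehat{W}$ is smooth everywhere.

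Next I would check the remaining structural axioms. Smallness holds because objects and morphisms are particular subsets of $V$ and of $\R\times V$, and subsets of a given set form a set. Associativity is a straightforward set-theoretic calculation: the triple composition of $(t_i,W_i)$ for $i=1,2,3$ parenthesised either way yields $\bigl(t_1+t_2+t_3,\,W_1\cup(t_1+W_2)\cup(t_1+t_2+W_3)\bigr)$, using that parallel translation commutes with union and that $(t_1+W_2)$ translated by $t_2$ relative to the outer composition agrees with $t_1+(t_2+W_3)$ relative to the inner one; both express the same subset of $[0,t_1+t_2+t_3]\times V$. The collar verifications for this triple composition are parallel to the binary case.

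The main obstacle is the smoothness check at the seam $\{t_1\}\times V$; this is precisely what forces the definition of a cobordism to include a collar (built into the hypothesis that $\widehat{W}$ is smooth). Once one exploits that $\widehat{W_1}$ and $t_1+\widehat{W_2}$ each agree with the cylinder $\R\times M_1$ on a small two-sided neighborhood of $\{t_1\}\times M_1$, the two smooth structures automatically match, and the rest of the argument is bookkeeping. No identity morphism is claimed, so units need not be checked — indeed, one sees that any morphism $(t,W)$ has $t>0$, so no cylinder of ``length zero'' is present in $\mathcal{C}_d^V$, which is why the category is only non-unital.
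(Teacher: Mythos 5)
Your proof is correct and takes essentially the same route as the paper's: composition is union of (translated) subsets, hence strictly associative, and smallness follows because objects and morphisms are subsets of $V$ and of $\R\times V$, so they form sets. The paper's own proof is in fact terser — it records only the smallness and strict-associativity points and treats well-definedness as routine, whereas you spell out the collar/seam smoothness check, which is a fine (and harmless) addition.
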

\begin{proof}
  This is in fact much easier than for the ``abstract'' cobordisms considered earlier.  The power set of $V$ is a set, and $\mathrm{Ob}(\mathcal{C}_d^V)$ is a subset thereof, hence it is again a set; morphisms form a set for a similar reason.  Composition is given by \emph{union of subsets} which is strictly associative (unlike the gluing operation of abstract manifolds, which is only ``associative up to (preferred) diffeomorphism'').
\end{proof}
As defined above there are no units for composition, due to the requirement $t > 0$.  We could fix this by allowing $t = 0$ with some care, but instead we shall just consider $\mathcal{C}_d(V)$ a \emph{non-unital category}.

Next we define a topology on the morphism spaces of $\mathcal{C}_d^V$.
If $M_0$ and $M_1$ are objects of $\mathcal{C}_d(V)$ and $W$ is an \emph{abstract} cobordism between them, then we have a map
\begin{equation}\label{eq:1}
  \begin{aligned}
  \R_{>0} \times \Emb_\partial(W,[0,1] \times V) & \to \mathcal{C}_d^V(M_0,M_1)\\
  (t,\phi) & \mapsto (t,\lambda_t(\phi(W))),
\end{aligned}
\end{equation}
where $\lambda_t$ stretches by a factor of $t$ in the first factor, i.e.\ it is the map $\R \times V \to \R \times V$ is given by $(s,v) \mapsto (st,v)$.  The subscript ``$\partial$'' denotes that the embeddings are required to agree with the canonical $M_0 \hookrightarrow \{0\} \times V$ and $M_1 \hookrightarrow \{1\} \times V$ when restricted to $\partial W$, and also that they must have a product behavior on a collar neighborhood of the boundary.

The point is now that the domain of~(\ref{eq:1}) has a natural topology: the Euclidean topology on $\R_{>0}$ and the $C^\infty$ topology on the embedding space.  The latter is the topology in which convergence means uniform convergence of all partial derivatives of any order (with respect to a chart on $W$ it should be uniform convergence of partial derivatives on compact subsets of the chart).  We give the codomain of~(\ref{eq:1}) the quotient topology (as $W$ ranges over all abstract cobordisms).

\begin{definition}
  Let $\mathcal{C}_d^V$ be the topologically enriched category whose objects are the closed $(d-1)$-dimensional submanifolds $M \subset V$ and whose morphism spaces are the sets of embedded cobordisms $(t,W)$ in the quotient topology from the $C^\infty$ topology, as defined above.
\end{definition}

At first sight the definition of $\mathcal{C}_d^V$ may seem a bit ad hoc: for example, the particular way that ``collar conditions'' near incoming and outgoing boundaries is handled; one could imagine other definitions (e.g.\ objects could come with a germ of a $d$-dimensional thickening) which would lead to a topological category which is likely not isomorphic to the one defined above.  However, we only want to declare this particular model to be ``the'' canonical model up to a suitable notion of \emph{equivalence}, namely the so-called \emph{DK-equivalence}.  We shall return briefly to this homotopy theory question later in the lectures.

Before proceeding, let us discuss the geometric meaning of the homotopy type of the morphisms spaces $\mathcal{C}_d^V(M_0,M_1)$.

\subsection{Manifold bundles and bundles of cobordisms}
\label{sec:bundles}

We will only give a cursory discussion of this material, referring the reader to the original sources for further details.

To simplify matters we postpone the discussion of boundaries.  Hence, let $W$ be a closed manifold, let $V$ be a finite dimensional real vector space, and consider the orbit space
\begin{equation}\label{eq:2}
  \Emb(W,V)/\Diff(W),
\end{equation}
where the diffeomorphism group of $W$ acts on the embedding space by precomposition.  As a set, this quotient may be identified with a subset of the power set of $V$, namely those subsets of $V$ which have the property of being submanifolds which additionally are diffeomorphic to $W$.  The space is topologized as a quotient of the embedding space, which is topologized in the $C^\infty$ topology.

How does one produce continuous maps \emph{into} the space~(\ref{eq:2})?  A good way is to start with an \emph{embedded bundle}.  Suppose $X$ is a smooth manifold without boundary, but not necessarily compact, and that $E \subset X \times V$ is a subset with the following properties
\begin{itemize}
\item $E$ is a smooth submanifold of $X \times V$, without boundary,
\item the composition
  \begin{equation*}
    E \xrightarrow{\text{inclusion}} X \times V \xrightarrow{\text{projection}} X
  \end{equation*}
  is proper (inverse image of compact is compact), smooth ($C^\infty$), and a submersion (derivative at any point is surjective),
\item the inverse image $E_x \subset V$ of $x \in X$ is diffeomorphic\footnote{$E_x$ is given the smooth structure arising as the inverse image of a regular point, which agrees with the smooth structure it inherits as a submanifold of $V$.} to $W$, for all $x \in X$.
\end{itemize}
Let us say that a subset $E \subset X \times V$ with these properties is an \emph{smooth $W$-bundle over $X$ embedded in $X \times V$.}
For a subset $E \subset X \times V$ with these properties, the map
\begin{align*}
  X & \to\Emb(W,V)/\Diff(W)\\
  x & \mapsto E_x
\end{align*}
can be shown to be continuous.  Moreover, any continuous map is homotopic to one arising this way, and in fact this correspondence gives rise to a bijection
\begin{equation}\label{eq:3}
  [X,\Emb(W,V)/\Diff(W)] \cong \frac{\{\text{$E \subset X \times V$ as above}\}}{\simeq}
\end{equation}
where the equivalence relation $\simeq$ on the right denotes \emph{concordance}: a concordance from $E$ to $E'$ is an embedded smooth $W$ bundle over $\R \times X$  whose restriction to $\{0\} \times X$ is $\{0\} \times E$ and to $\{1\} \times X$ is $\{1\} \times E'$.

In this sense, $\Emb(W,V)/\Diff(W)$ is a classifying space for embedded smooth $W$-bundles, in much the same way as Grassmannians are classifying spaces for vector bundles embedded in trivial bundles.

One way to prove~(\ref{eq:3}) proceeds by giving $\Emb(W,V)/\Diff(W)$ the structure of an \emph{infinite dimensional manifold} and proving that any continuous map from $X$ may be homotoped to a smooth one.  The usual reference seems to be \cite{MR613004}.

The space $\mathcal{C}_d(M_0,M_1)$ may be understood similarly, as a classifying space for ``embedded bundles of cobordisms''.
\begin{definition}
  Let $M_0$ and $M_1$ be objects of $\mathcal{C}_d$, and let $X$ be a smooth manifold without boundary (possibly non-compact).  A subset $E \subset X \times ([0,1] \times V)$ is a \emph{bundle of cobordisms} from $M_0 \subset V$ to $M_1 \subset V$ provided
  \begin{enumerate}[(i)]
  \item $E$ is a smooth submanifold,
  \item the projection map $\pi: E \to X$ is a proper submersion,
  \item the boundary of $E$ equals $X \times (\{0\} \times M_0 \cup \{1\} \times M_1)$, and for some $\epsilon > 0$, 
    \begin{align*}
      E \cap (X \times ([0,\epsilon] \times V)) & = X \times ([0,\epsilon] \times M_0)\\
      E \cap (X \times ([1-\epsilon,1] \times V)) & = X \times ([1-\epsilon,1] \times M_1).\\
    \end{align*}
  \end{enumerate}
\end{definition}
As for closed manifolds, such a bundle of embedded cobordisms gives rise to a continuous map
\begin{equation*}
  X \to \mathcal{C}^V_d(M_0,M_1),
\end{equation*}
any continuous map is homotopic to one arising this way, and this construction induces a natural bijection
\begin{equation*}
  [X,\mathcal{C}^V_d(M_0,M_1)] \cong \frac{\text{$E \subset X \times ([0,1] \times V)$ bundle of cobordisms from $M_0$ to $M_1$}}{\simeq}
\end{equation*}
where the equivalence relation $\simeq$ on the right denotes \emph{concordance}: a concordance from $E$ to $E'$ is a bundle of cobordisms over $\R \times X$ whose restriction to $\{0\} \times X$ is $\{0\} \times E$ and to $\{1\} \times X$ is $\{1\} \times E'$.

\subsection{Infinite dimensional ambient space}
\label{sec:infinite}

To get a ``more universal'' object we may take colimit over $V$.
\begin{definition}
  Let
  \begin{equation*}
    \mathcal{C}_d = \colim_{V \subset \R^\infty} \mathcal{C}_d^V
  \end{equation*}
  be the topologically enriched category defined as the colimit over finite-dimensional linear subspaces $V \subset \R^\infty = \oplus_\N \R$.
\end{definition}

For any other infinite-dimensional real vector space $\mathcal{U}$ we could similarly define a cobordism category $\mathcal{C}_d^{\mathcal{U}}$ by taking colimit over finite-dimensional $V \subset \mathcal{U}$ but this would not be especially interesting: the comparison functors
\begin{equation*}
  \mathcal{C}_d^{\mathcal{U}} \to \mathcal{C}_d^{\mathcal{U} \oplus \R^\infty} \leftarrow \mathcal{C}_d^{\R^\infty} = \mathcal{C}_d.
\end{equation*}
are both ``$DK$-equivalences'' in the sense explained below.

For reasons of time we shall mostly stick to finite-dimensional $V$ in these lectures, but let us record one interesting connection to diffeomorphism groups of manifolds.
\begin{proposition}
  There is a weak equivalence
  \begin{equation}\label{eq:9}
    \mathcal{C}_d(M_0,M_1) \simeq \coprod_W B\Diff_\partial(W),
  \end{equation}
  where $W$ ranges over abstract cobordisms from $M_0$ to $M_1$, one in each diffeomorphism class, and $\Diff_\partial(W)$ denotes the group of diffeomorphisms of $W$ which act as the identity near $\partial W$, topologized in the $C^\infty$ topology.
\end{proposition}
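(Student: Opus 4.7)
The plan is to unpack the morphism space in a way that directly exhibits the claimed product decomposition. For a fixed finite-dimensional $V$, the definition of $\mathcal{C}_d^V(M_0,M_1)$ gives, for each abstract cobordism $W \colon M_0 \leadsto M_1$, the continuous surjection
\begin{equation*}
  \R_{>0} \times \Emb_\partial(W,[0,1]\times V) \twoheadrightarrow \mathcal{C}_d^V(M_0,M_1)_W,
\end{equation*}
where the right hand side is the subspace of morphisms whose underlying manifold is diffeomorphic, as a cobordism, to $W$. Taking the colimit over $V\subset \R^\infty$ commutes with this decomposition, so I would first argue that
\begin{equation*}
  \mathcal{C}_d(M_0,M_1) = \coprod_W \bigl(\R_{>0} \times \Emb_\partial(W,[0,1]\times \R^\infty)\bigr)\big/{\sim},
\end{equation*}
where $W$ runs over a set of representatives of diffeomorphism classes of cobordisms.

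Next I would identify the equivalence relation $\sim$ on each piece as the orbit relation for the free right action of $\Diff_\partial(W)$ by precomposition: two pairs $(t,\phi)$ and $(t',\phi')$ map to the same element of $\mathcal{C}_d(M_0,M_1)$ precisely when $t=t'$ and $\phi'=\phi\circ\psi$ for some $\psi\in\Diff_\partial(W)$, because the image subset $\lambda_t(\phi(W))\subset [0,t]\times\R^\infty$ together with its boundary collars determines $(t,\phi)$ up to reparametrization of the source. Thus each summand is identified with the orbit space
\begin{equation*}
  \R_{>0} \times \bigl(\Emb_\partial(W,[0,1]\times\R^\infty)/\Diff_\partial(W)\bigr).
\end{equation*}

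The key input is then the classical fact (due to Whitney, in the form used by e.g.\ Binz--Fischer or Michor, cf.\ \cite{MR613004}) that $\Emb_\partial(W,[0,1]\times\R^\infty)$ is weakly contractible: any two embeddings may be connected by a path after stabilizing into enough extra coordinates, using the standard ``stretch one embedding into even coordinates, the other into odd, and linearly interpolate'' argument. Together with the contractibility of $\R_{>0}$, this implies that the numerator in each piece is weakly contractible. Finally, the action of $\Diff_\partial(W)$ on $\Emb_\partial(W,[0,1]\times\R^\infty)$ is free and admits local slices, so the projection to the orbit space is a principal $\Diff_\partial(W)$-bundle; hence the orbit space is a model for $B\Diff_\partial(W)$. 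Assembling over $W$ yields the weak equivalence~\eqref{eq:9}.

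The main obstacle is the principal-bundle step: one must verify that the free $\Diff_\partial(W)$-action on $\Emb_\partial(W,[0,1]\times\R^\infty)$ really does yield a locally trivial (or at least numerable principal) bundle, so that a weakly contractible total space forces the base to be a classifying space. This is the part where one genuinely uses the infinite-dimensional manifold structure on the embedding space and the existence of tubular neighborhoods for $\Diff_\partial(W)$-orbits; everything else is bookkeeping in the definitions.
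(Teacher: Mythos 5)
Your proposal follows essentially the same route as the paper, which does not give a detailed proof but remarks that the statement ``boils down to the embedding space $\Emb_\partial(W,\R^\infty)$ being weakly contractible and the quotient map to $\Emb_\partial(W,\R^\infty)/\Diff_\partial(W)$ being a fiber bundle'' --- exactly the decomposition by diffeomorphism type, contractibility, and principal-bundle argument you describe. Your identification of the equivalence relation with the free $\Diff_\partial(W)$-action and your flagging of the local-slice/bundle step as the genuine technical input are both consistent with the paper's sketch and its reference to \cite{MR613004}.
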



  We will again not prove this in any detail, but remark that it boils down to the embedding space $\Emb_\partial(W,\R^\infty)$ being weakly contractible and the quotient map to $\Emb_\partial(W,\R^\infty)/\Diff_\partial(W)$ being a fiber bundle.

\subsection{Categories versus enriched categories}


In order to discuss the relationship between $\Cob_d$ and $\mathcal{C}_d$, we first discuss the relationship between enriched and unenriched categories in the abstract.
\begin{definition}
  Let $D$ be a small (ordinary) category.  Then we may regard $D$ as a topologically enriched category by giving each set $D(x,y)$ the discrete topology.  This topologically enriched category shall be denoted $\iota D$.

  Let $\mathrm{TCat}$ denote the category of small topologically enriched small categories, and topologically enriched functors between them.  Thus we have defined a functor $\iota: \mathrm{Cat} \to \mathrm{TCat}$.
\end{definition}

\begin{definition}
  For a small topologically enriched category $C$, let $h  C$ denote the (unenriched) category with the same object set as $C$, but with morphisms sets given by
  \begin{equation*}
    h  C(x,y) = \pi_0(C(x,y)),
  \end{equation*}
  the set of path components of the morphism space in $C$.  Composition is induced by composition in $C$ (using that $\pi_0$ preserves products).
\end{definition}

\begin{definition}
  Let $C$ and $D$ be topologically enriched categories.  Then a functor $F: C \to D$ is a \emph{DK-equivalence}\footnote{This terminology is more commonly used in the setting of \emph{simplicially enriched} categories, but makes sense for topologically enriched as well.  If desired, a topologically enriched category can of course be turned into a simplicially enriched one by taking $\Sing$ of all morphism spaces.} if the induced functor $(h F): (hC) \to (hD)$ is an equivalence in the usual sense and if the maps
  \begin{equation*}
    C(x,y) \to D(Fx,Fy)
  \end{equation*}
  are weak equivalences for all objects $x,y \in C$.
\end{definition}

\begin{example}\label{ex:Sing-real}
  For a topologically enriched category $C$ we can define another topologically enriched category $C'$ with the same object set, but with $C'(x,y) = |\Sing(C(x,y))|$, where $\Sing$ denotes the total singular complex.  Composition is induced by composition in $C$, using that both $\Sing$ and geometric realization of simplicial sets preserves finite products.  The evaluation maps $|\Sing(C(x,y))| \to X(x,y)$ are compatible with composition and hence define a DK-equivalence $C' \to C$.  In particular any topologically enriched category is equivalent to one whose morphism spaces are CW complexes.  See Exercise~\ref{Postnikov} for more details.
\end{example}

\begin{remark}
  The topologically enriched categories appearing in these lectures should be understood as homotopy theoretic objects.  In the same way as a weak equivalence $X \to Y$ of spaces lets us consider them as ``models'' for the same homotopy type, we shall consider two topologically enriched categories as ``models'' for the same platonic object whenever we have given a DK-equivalence between them, or at least a zig-zag of such.

  The point-set definition of $\mathcal{C}_d^V$ and $\mathcal{C}_d$ that we wrote down above should be understood in this sense, as reference models for the (embedded) cobordism category.  
  
  Topologically enriched categories form one model for $(\infty,1)$-categories.  Presumably everything in these notes could have been presented in any of the other popular settings (simplicially enriched categories, quasi-categories, complete Segal spaces, etc.)
\end{remark}

The relationship between $\mathcal{C}_d$ and $\Cob_d$ may  be expressed succintly as an equivalence of categories $h  \mathcal{C}_d \simeq \Cob_d$, as follows from the description~(\ref{eq:9}).  More precisely the following holds.
\begin{proposition}
  Let $\mathcal{C}_d^V \to \iota \Cob_d$ be the functor which sends an object $M_0 \subset V$ to the underlying abstract smooth manifold $M_0$ and an embedded cobordism $W \subset [0,t] \times V$ to its diffeomorphism class as a cobordism.  Then the induced functor
  \begin{equation*}
    h \mathcal{C}_d^V \to \Cob_d
  \end{equation*}
  is an equivalence of categories, provided $\dim(V) \gg d$.
\end{proposition}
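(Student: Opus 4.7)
The plan is to verify the two conditions defining an equivalence of (ordinary) categories: that the functor is essentially surjective on objects and fully faithful on morphism sets. Throughout, $\dim(V) \gg d$ will be quantified as needed to make Whitney-type embedding and isotopy theorems apply.

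For essential surjectivity, we must show that every closed smooth $(d-1)$-manifold $M$ is, up to diffeomorphism, of the form $M_0 \subset V$ for some $M_0 \in \mathrm{Ob}(\mathcal{C}_d^V)$. This is just Whitney's embedding theorem applied to $M$: for $\dim(V) \geq 2(d-1)$ (in fact $2(d-1)+1$ suffices for an embedding) any such $M$ embeds smoothly in $V$, giving the required object.

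For fullness, fix $M_0,M_1 \subset V$ and an abstract cobordism class $[W]\in \Cob_d(M_0,M_1)$ represented by $(W,c_\mathrm{in},c_\mathrm{out})$. We must produce an embedded representative. Rescale the collars to have length bounded by some $\epsilon < t/2$; we need an embedding $\phi \colon W \hookrightarrow [0,t]\times V$ whose restriction near the incoming (resp.\ outgoing) boundary agrees with $c_\mathrm{in}$ translated into $[0,\epsilon]\times V$ (resp.\ $c_\mathrm{out}$ into $[t-\epsilon,t]\times V$). This is a relative Whitney embedding problem: the embedding is prescribed on a neighborhood of $\partial W$, and one extends to the interior. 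For $\dim V +1 \geq 2d+1$ (equivalently $\dim V \geq 2d$), the standard extension argument — choose any smooth extension as a map, perturb into general position, remove double points by Whitney tricks — produces the desired embedding. This gives a morphism in $\mathcal{C}_d^V$ mapping to $[W]$ in $\Cob_d$.

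For faithfulness, suppose $(t,W)$ and $(t',W')$ are embedded cobordisms from $M_0$ to $M_1$ that become equal in $\Cob_d$, i.e.\ are diffeomorphic as abstract cobordisms. First, a straight-line homotopy in the $\R_{>0}$ coordinate (together with the evident rescaling of the first coordinate) shows they lie in the same path component as ones with a common value of $t$, say $t=1$. Having done this, we fix an abstract cobordism $W_0$ and pick embeddings $\phi,\phi' \colon W_0 \hookrightarrow [0,1]\times V$ with prescribed collar behavior, whose images are $W$ and $W'$ respectively; this uses the diffeomorphism of cobordisms from $W$ to $W'$. Since the map~\eqref{eq:1} is continuous and sends both $\phi$ and $\phi'$ into the target, it suffices to show $\phi$ and $\phi'$ lie in the same path component of $\Emb_\partial(W_0,[0,1]\times V)$. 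But for $\dim V$ sufficiently large relative to $d$, the space $\Emb_\partial(W_0,[0,1]\times V)$ is path-connected (in fact highly connected): this is a standard consequence of general-position arguments showing that any two embeddings of a $d$-manifold into a manifold of dimension $\geq 2d+2$ rel boundary are regularly homotopic, and can be deformed to be isotopic.

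The main obstacle is pinning down the quantitative hypothesis hidden in ``$\dim(V) \gg d$'': one must simultaneously ensure (a) enough room to embed every $(d-1)$-manifold into $V$, (b) enough room to embed every $d$-dimensional cobordism into $[0,t]\times V$ rel the given boundary embedding, and (c) enough codimension for any two such embeddings (rel boundary) to be isotopic. All three are classical; taking $\dim V \geq 2d+1$ comfortably suffices. The remaining technical care lies in the bookkeeping with collars — one must verify that the relative embedding and isotopy extensions can be arranged to respect the product collar structure, which is straightforward by working on a slightly shrunken collar and extending.
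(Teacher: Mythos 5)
Your proof is correct and follows essentially the same route as the paper's sketch: essential surjectivity via the Whitney embedding theorem, fullness via relative embedding of a cobordism with prescribed (collared) boundary behavior, and faithfulness via isotopy of any two such embeddings once the ambient dimension $\dim(V)+1$ exceeds roughly $2d+1$. The only differences are bookkeeping of the exact codimension bounds, which is immaterial under the hypothesis $\dim(V) \gg d$.
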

\begin{proof}[Proof sketch]
  It is essentially surjective if $\dim(V) \geq 2d-1$, since any $(d-1)$-manifold $M$ admits an embedding into $\R^{2d-1}$.  It is full when $\dim(V) \geq 2d$, since any cobordism from $M_0$ to $M_1$ may then be embedded into $[0,t] \times \R^{2d}$ relative to any specified embeddings of $M_0$ and $M_1$, and it is faithful when $\dim(V) \geq 2d+1$ since any two embeddings of a cobordism are then isotopic, so represent the same element in $\pi_0\mathcal{C}_d^V(M_0,M_1)$.
\end{proof}

Since $\Cob_d$ is obtained from the topologically enriched category $\mathcal{C}_d^V$ by the functorial construction $C \mapsto hC$, we take the point of view that the latter ``contains more information''.  Once one becomes sufficiently advanced, one might decide that the enriched category $\mathcal{C}_d$, or perhaps some even more sophisticated object, is a more natural object to consider than $\Cob_d$.  Nevertheless, we shall in the rest of this section discuss how information about $\mathcal{C}_d^V$, such as the main theorem below, have implications for $\Cob_d$.

Under a mild assumption, the construction $C \mapsto h C$ has the following universal property.
\begin{lemma}
  Let $C$ be a topologically enriched category such that all morphism spaces $C(x,y)$ are locally path connected\footnote{On a technical level it is often more convenient to work with categories enriched in simplicial sets, instead of in topological spaces.  For example, the local path connectedness assumption here would go away: the analogue of taking the discrete topology is taking the constant simplicial set which is right adjoint to ``$\pi_0$''.}.  Then, for any small (unenriched) category $D$ there is a bijection
  \begin{equation*}
    \mathrm{Fun}(h  C,D) \cong \mathrm{Fun}^\mathrm{enr}(C,\iota D),
  \end{equation*}
  natural in $D \in \mathbf{Cat}$, given by composing with the canonical functor $C \to \iota (h  C)$.  That is, $C \to \iota (h  C)$ is the initial enriched functor to an ordinary category (more precisely, to a topologically enriched category in which all morphism spaces have the discrete topology).
\end{lemma}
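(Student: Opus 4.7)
The plan is to construct the bijection explicitly in both directions and check that they are mutually inverse. Precomposition with the canonical functor $\eta: C \to \iota(hC)$ gives the forward map $\mathrm{Fun}(hC,D) \to \mathrm{Fun}^{\mathrm{enr}}(C,\iota D)$; we need to produce an inverse. Given an enriched functor $F: C \to \iota D$, I would define $\overline{F}: hC \to D$ by $\overline{F}(x) = F(x)$ on objects and construct $\overline{F}_{x,y}: hC(x,y) \to D(Fx,Fy)$ on morphisms as follows.

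First I would unpack the enrichment hypothesis: an enriched functor $F: C \to \iota D$ is a functor of underlying categories together with the requirement that each map $F_{x,y}: C(x,y) \to D(Fx,Fy)$ is continuous when $D(Fx,Fy)$ carries the discrete topology. Continuity into a discrete space is equivalent to local constancy: the preimage of every point must be open. The key topological input is then that a locally constant map out of a locally path connected space factors uniquely through its set of path components; indeed, on any path component the map takes a single value (paths must map to paths, and the only paths in a discrete space are constant), and by local path connectedness the path components are open. Thus $F_{x,y}$ factors uniquely as $C(x,y) \twoheadrightarrow \pi_0 C(x,y) = hC(x,y) \xrightarrow{\overline{F}_{x,y}} D(Fx,Fy)$, and this $\overline{F}_{x,y}$ is our candidate morphism-level map.

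Next I would verify that $\overline{F}$ is an honest functor of ordinary categories. Identities are immediate from those of $F$. For composition one has to check that the square
\begin{equation*}
\begin{array}{ccc}
hC(x,y) \times hC(y,z) & \to & hC(x,z) \\
\overline{F}_{x,y} \times \overline{F}_{y,z} \downarrow & & \downarrow \overline{F}_{x,z} \\
D(Fx,Fy) \times D(Fy,Fz) & \to & D(Fx,Fz)
\end{array}
\end{equation*}
commutes. This reduces to the analogous commutativity for $F$ after applying $\pi_0$, provided that the composition law on $hC$ is the one induced from $C$ by applying $\pi_0$. That it is so uses the fact that $\pi_0$ preserves finite products, which lets us identify $\pi_0(C(x,y) \times C(y,z))$ with $hC(x,y) \times hC(y,z)$ before passing to $hC(x,z)$; this is exactly how $hC$ is defined. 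I expect this diagram chase to be the main (minor) bookkeeping step, but it is mechanical once local path connectedness has been used to extract $\overline{F}$.

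Finally I would observe that the two assignments $F \mapsto \overline{F}$ and $G \mapsto G \circ \eta$ are mutually inverse: starting from $G: hC \to D$, the induced enriched functor $G \circ \eta$ is locally constant and its factorization through $\pi_0$ is $G$ itself by the uniqueness in the factorization above; conversely, starting from $F$, we have $\overline{F} \circ \eta = F$ on morphisms by construction. Naturality in $D$ is automatic because everything is defined by (pre)composition with the fixed functor $\eta: C \to \iota(hC)$, which does not depend on $D$. The only subtle ingredient is the local-path-connectedness hypothesis; without it one could have continuous locally-non-constant maps into a discrete space (e.g.\ if $C(x,y)$ were a space like the topologist's sine curve, where path components are not open), and the factorization through $\pi_0$ would fail.
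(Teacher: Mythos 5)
Your argument is correct and is essentially the paper's own (the paper only sketches it, reducing the lemma to the fact that for locally path connected $X$ the map $X \to \pi_0(X)$ with discrete target is the initial continuous map to a discrete space, which is exactly the factorization-through-$\pi_0$ plus openness-of-path-components argument you spell out, together with $\pi_0$ preserving finite products for functoriality). One small inaccuracy in your closing aside: a continuous map to a discrete space is always locally constant; what fails without local path connectedness is rather that the canonical map $C(x,y) \to \pi_0(C(x,y))$ with discrete target need not be continuous (continuous maps to discrete spaces are constant on connected components, which may merge path components, as for the topologist's sine curve), so the canonical functor $C \to \iota(hC)$ need not be enriched.
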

In particular, functors out of $\Cob_d$ may be translated to enriched functors out of $\mathcal{C}_d$.

The lemma is easily deduced from the corresponding statement about topological spaces: if $X$ is locally path connected, then $X \to \pi_0(X)$ is the initial continuous map to a space which has the discrete topology.  (Without local path connectedness, such a space might not exist!  See Exercise~\ref{adjoints}.)

It also makes sense to take the classifying space of a topologically enriched category $C$: in this case the set $N_p(C)$ of composable $p$-tuples of morphisms inherits a topology from the morphism spaces of $C$, and hence $NC$ is a \emph{simplicial topological space}.  The geometric realization
\begin{equation*}
  BC = |NC| = \bigg(\coprod_{p \geq 0} \Delta^p \times N_pC\bigg)/\sim
\end{equation*}
may be defined as for simplicial sets, except that $\Delta^p \times N_pC$ is given the product topology and the whole space $BC$ is given the quotient topology.  Actually, we shall work with the \emph{thick} geometric realization, i.e.\ take only the quotient involving the face maps of $NC$, not the degeneracy maps.  With this definition, the space $BC$ still makes sense when $C$ is a non-unital topologically enriched category: the nerve $NC$ is then a \emph{semi-simplicial space} (it has no degeneracy maps defined).
\begin{lemma}
  Let $C$ be a topologically enriched category, possibly non-unital, with locally path connected morphism spaces, and apply ``$B$'' to the canonical functor $C \to \iota(h  C)$; then the resulting map
  \begin{equation*}
    BC \to B(h  C)
  \end{equation*}
  is 2-connected.  Hence we get equivalences of groupoids
  \begin{equation*}
    hC[(hC)^{-1}] \xrightarrow{\simeq} \pi_1(B(h  C)) \xleftarrow{\simeq} \pi_1(BC).
  \end{equation*}
\end{lemma}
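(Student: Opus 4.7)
The plan is to recognize $BC \to B(hC)$ as the realization of the semi-simplicial map $NC \to N(\iota hC)$. Since each $N_p(\iota hC)$ is a discrete space and each morphism space $C(x,y)$ is locally path connected, the product $N_pC = \coprod C(x_0,x_1) \times \cdots \times C(x_{p-1},x_p)$ is again locally path connected, and the level-$p$ map is precisely the quotient $N_pC \twoheadrightarrow \pi_0(N_pC)$ (using that $\pi_0$ commutes with products of locally path connected spaces). This presents the problem as one about the homotopical effect of collapsing each path component at each simplicial level to a point.

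I would then verify the three conditions equivalent to being 2-connected: bijection on $\pi_0$, isomorphism on $\pi_1$, and surjection on $\pi_2$. The $\pi_0$ part is essentially formal: both spaces have the same 0-simplices $N_0C$, and two objects $x,y$ are connected by a 1-simplex in $BC$ iff $C(x,y) \neq \emptyset$ iff $hC(x,y) \neq \emptyset$ iff they are connected in $B(hC)$. For $\pi_1$, I would use the standard presentation of $\pi_1$ of a (thick) geometric realization in terms of edge loops with relations coming from 2-simplices. Surjectivity is immediate: every 1-simplex of $B(hC)$, labelled by a class $[f] \in \pi_0 C(x,y)$, lifts to a 1-simplex of $BC$ labelled by any representative $f$. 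Injectivity is the more delicate part: a null-homotopy in $B(hC)$ is built from finitely many composition 2-simplices witnessing $[g][f] = [gf]$, each of which lifts to a composition 2-simplex $(g,f)$ in $BC$ for a chosen composable pair; the discrepancy between the resulting composite $g \circ f$ and the previously chosen representative of the class $[gf]$ lies inside a single path component of some $C(x,z)$, and can therefore be bridged by the extra 2-cells in $BC$ that arise from paths in $N_1C$.

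Surjection on $\pi_2$ proceeds by the same lifting strategy: a $\pi_2$ class in $B(hC)$ is represented by a cellular map $S^2 \to B(hC)^{(2)}$; each 0-, 1-, and 2-cell of the image lifts to $BC$ as above, with any mismatch in 2-cell boundaries absorbed into path-component 2-cells in $BC$, producing a class in $\pi_2(BC)$ mapping to the original. Once 2-connectedness is established, the final equivalences $hC[(hC)^{-1}] \xrightarrow{\simeq} \pi_1(B(hC)) \xleftarrow{\simeq} \pi_1(BC)$ follow by combining the corollary at the end of Lecture~1 (any 2-connected map induces an equivalence on fundamental groupoids) with the identification $hC[(hC)^{-1}] \simeq \pi_1(B(hC))$ coming from the universal property of the fundamental groupoid.

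The main obstacle I expect is the $\pi_1$-injectivity step: the bookkeeping needed to organize the lift of a null-homotopy, making precise use of the path connectedness of morphism-space components to produce the required bridging 2-cells in $BC$. The $\pi_2$ surjectivity is technically similar in flavor but reaches into one higher dimension; together they reduce the statement to a systematic use of the two kinds of 2-cells present in $BC$ but not in $B(hC)$, namely those arising from composable pairs and those arising from paths inside morphism spaces.
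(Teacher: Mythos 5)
Your overall strategy is sound and your reduction to the semi-simplicial level is exactly the right starting point, but your route then diverges from the paper's. The paper's proof is shorter and more structural: it observes that the levelwise map $N_pC \to N_p(\iota hC)$ is $(2-p)$-connected for every $p$ --- a homeomorphism for $p=0$ (bijection on objects), $1$-connected for $p=1$ (this is where local path connectedness enters, via $X \to \pi_0(X)$ being a continuous $1$-connected map), surjective on $\pi_0$ for $p=2$ (indeed surjective on points), and vacuous for $p>2$ --- and then invokes the general fact that the (thick) geometric realization of a map of semi-simplicial spaces is $n$-connected whenever the map of $p$-simplices is $(n-p)$-connected. Your proposal instead verifies the three conditions ($\pi_0$ bijection, $\pi_1$ isomorphism, $\pi_2$ surjection) by hand, lifting cells of $B(hC)$ to $BC$ and bridging mismatches by paths inside morphism spaces. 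This can be made to work, and it has the virtue of making visible exactly which extra $2$-cells of $BC$ do the work; but the steps you flag as ``the main obstacle'' --- organizing the lift of a null-homotopy for $\pi_1$-injectivity, and the analogous bookkeeping one dimension up for $\pi_2$-surjectivity --- are precisely the content that the general realization lemma packages once and for all (e.g.\ by induction over the skeletal filtration of the thick realization, where attaching $\Delta^p \times N_pC$ along $\partial\Delta^p \times N_pC$ only affects homotopy in degrees $\geq p$ relative to the connectivity of $N_pC \to N_p(\iota hC)$). So what your approach buys is concreteness in low dimensions; what the paper's buys is that the delicate combinatorics you would still need to write out carefully is replaced by a single standard statement about realizations, which also explains why the cutoff is at $2$-connectedness (the $p$-simplices only need $(2-p)$-connectivity, which is automatic for $p>2$). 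If you complete your argument, be careful in the $\pi_2$ step that a cellular $2$-sphere in $B(hC)$ may use a given composition $2$-simplex several times with differently chosen lifts, so the bridging homotopies must be chosen coherently along shared edges; this is exactly the bookkeeping the skeletal induction avoids.
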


\begin{proof}
  We claim that the induced map of simplicial nerves has
  \begin{equation*}
    N_pC \to N_p(\iota h C)
  \end{equation*}
  a $(2-p)$-connected map for all $p \geq 0$.  For $p=0$ it is in fact a homeomorphism since the functor $C \to (\iota h C)$ is a bijection (the identity) on object sets.  For $p = 1$ it follows because the maps
  \begin{equation*}
    C(x,y) \to (\iota h C)(x,y) = \pi_0(C(x,y))
  \end{equation*}
  of morphisms spaces are 1-connected for all $x$ and $y$.  This is just the statement that for any locally path connected space the continuous map $X \to \pi_0(X)$ is 1-connected.  For $p=2$ we assert that $N_2C \to N_2(\iota h C)$ induces a surjection on $\pi_0$, but in fact it is even surjective on the point-set level.  For $p > 2$ the claim is vacuous.

  It is then a general fact that the geometric realization of a map of semi-simplicial spaces is $n$-connected if the map of $p$-simplices is $(n-p)$-connected.
\end{proof}

The point of the lemma (and of considering topologically enriched categories in this context) is that it may be easier to describe the whole homotopy type of $BC$ than to try to calculate $h C[(hC)^{-1}]$ ``directly''.  With more experience, it may also turn out that one is actually interested in the extra information contained in $C$ which is forgotten in $hC$: that extra information is the homotopy type of each path component of the morphism spaces $C(x,y)$.

\begin{corollary}
  The functors $\mathcal{C}_d \to h \mathcal{C}_d \simeq \Cob_d$ induce equivalences of groupoids
  \begin{equation*}
    \Cob_d[\Cob_d^{-1}] \xrightarrow{\simeq} \pi_1(B\Cob_d) \xleftarrow{\simeq} \pi_1(B\mathcal{C}_d)
  \end{equation*}
  and hence a functor $\gamma: \Cob_d \to \pi_1(B\mathcal{C}_d)$, universal among functors to groupoids, up to equivalence of categories.
\end{corollary}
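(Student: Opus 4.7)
The plan is to assemble the corollary from three ingredients already present in the excerpt: the preceding lemma comparing $BC$ with $B(hC)$ for a topologically enriched $C$ with locally path connected morphism spaces; the proposition $h\mathcal{C}_d^V \simeq \Cob_d$ for $\dim V \gg d$; and the localization identification $D[D^{-1}] \simeq \pi_1(BD)$ recorded in the remark following Proposition~\ref{prop:groupoid-case}.

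First I would verify the one non-formal hypothesis, namely that each morphism space $\mathcal{C}_d(M_0,M_1)$ is locally path connected. Via~\eqref{eq:9} this space is weakly equivalent to $\coprod_W B\Diff_\partial(W)$, but more directly for the point-set question, it is a quotient of an embedding space (which carries an infinite-dimensional smooth manifold structure) by a smooth free action of a diffeomorphism group, so local path connectedness holds on the nose. If one prefers to avoid this check, one may replace $\mathcal{C}_d$ by the DK-equivalent model of Example~\ref{ex:Sing-real} whose morphism spaces are CW complexes.

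Granting that hypothesis, the preceding lemma applied to $C = \mathcal{C}_d$ yields the right-hand equivalence $\pi_1(B\mathcal{C}_d) \xrightarrow{\simeq} \pi_1(B(h\mathcal{C}_d))$. The equivalence of categories $h\mathcal{C}_d \simeq \Cob_d$, obtained from $h\mathcal{C}_d^V \simeq \Cob_d$ by passing to the filtered colimit over finite-dimensional $V \subset \R^\infty$ (using that $B$ and $\pi_1$ commute with filtered colimits of equivalences), then identifies $\pi_1(B(h\mathcal{C}_d))$ with $\pi_1(B\Cob_d)$. The left-hand equivalence $\Cob_d[\Cob_d^{-1}] \xrightarrow{\simeq} \pi_1(B\Cob_d)$ is the localization remark applied to the small category $D = \Cob_d$.

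Composing these equivalences produces the functor $\gamma\colon \Cob_d \to \pi_1(B\mathcal{C}_d)$, and its universal property among functors to groupoids is then immediate: any such functor factors essentially uniquely through $\Cob_d[\Cob_d^{-1}]$ by the universal property of localization, and this target has been identified up to equivalence of categories with $\pi_1(B\mathcal{C}_d)$. The main point demanding care is the local path connectedness of the morphism spaces of $\mathcal{C}_d$; once that is granted, the rest is formal bookkeeping with equivalences already established in the excerpt.
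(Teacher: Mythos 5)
Your proposal is correct and follows essentially the same route as the paper: the corollary is obtained by applying the preceding lemma to $C=\mathcal{C}_d$, combining it with the equivalence $h\mathcal{C}_d \simeq \Cob_d$ and the identification of $\Cob_d[\Cob_d^{-1}]$ with $\pi_1(B\Cob_d)$ coming from the localization discussion. Your explicit check of local path connectedness of the morphism spaces (or the replacement by the DK-equivalent model with CW morphism spaces) addresses a hypothesis the paper leaves implicit, but it does not change the argument.
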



\subsection{The main theorem}
\label{sec:main-thm}

It turns out there is a nice formula for the homotopy type of $B\mathcal{C}_d^V$, the classifying space of the cobordism category.  As we have seen above, such a formula implies a classification of invertible field theories (poor man's version; we return to symmetric monoidal structures later).
\begin{definition}
  Let $V$ be a finite dimensional real inner product space and $\Gr_d(V)$ denote the Grassmannian of $d$-planes in $\R \oplus V$.  Let $U_{d,V}$ be the universal $d$-dimensional vector bundle over $\mathrm{Gr}_d(V)$, and let
  \begin{equation*}
    U_{d,V}^\perp = \{(X,v) \in \mathrm{Gr}_d(V) \mid v \in X^\perp\}
  \end{equation*}
  be its orthogonal complement in $V$.  Finally, let $T_{d,V}$ be the \emph{Thom space} of $U_{d,V}^\perp$, i.e.\ the one-point compactification of its total space.  Let $S^V$ denote the one-point compactification of $V$, and let
  \begin{equation*}
    \Omega^V T_{d,\R \oplus V} = \mathrm{Map}_*(S^V,T_{d,\R \oplus V})
  \end{equation*}
  denote the space of basepoint preserving maps, in the compact-open topology.  (I.e., if $V = \R^n$ this is the $n$-fold based loop space.)
\end{definition}

We can now state the main theorem of these lectures.
\begin{theorem}\label{thm:gmtw}
  There is a weak equivalence
  \begin{equation}\label{eq:4}
    B\mathcal{C}_d^V \simeq \Omega^V T_{d,\R \oplus V}.
  \end{equation}
\end{theorem}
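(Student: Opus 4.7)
The plan is to execute a Pontryagin-Thom / scanning argument, routed through an intermediate ``sheaf of $d$-submanifolds.'' For each open $U \subset \R \times V$, define $\psi_d(U)$ to be the set of subsets $M \subset U$ that are closed as subsets of $U$ and are smooth $d$-dimensional submanifolds of $U$, topologized through embedded bundles exactly as in Section~\ref{sec:bundles}. I would show that both sides of~(\ref{eq:4}) are weakly equivalent to the space $\psi_d(\R \times V)$.

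\emph{Right-hand side.} First I would prove $\psi_d(\R \times V) \simeq \Omega^V T_{d,\R \oplus V}$ by a parametrized Pontryagin-Thom construction. Given a family $E \subset X \times (\R \times V)$ of closed $d$-submanifolds, the fiberwise normal bundle together with the Gauss map $E \to U_{d,\R\oplus V}^\perp$ produces by Thom collapse a map $X_+ \wedge S^{\R \oplus V} \to T_{d,\R \oplus V}$, and hence adjointly a map $\psi_d(\R \times V) \to \Omega^{\R \oplus V} T_{d,\R \oplus V}$. That this is a weak equivalence is the heart of scanning: the sheaf $U \mapsto \psi_d(U)$ is microflexible (one can smoothly interpolate submanifolds defined on pieces of an open cover), and on a small ball it retracts onto the space of linear $d$-planes through the center, yielding the Grassmannian $\Gr_d(\R \oplus V)$. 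A local-to-global argument of the sort used for microflexible sheaves upgrades this to the claimed global equivalence.

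\emph{Left-hand side.} Next I would build a bridge from $B\mathcal{C}_d^V$ to $\psi_d(\R \times V)$ via an auxiliary semi-simplicial space $\mathcal{D}_\bullet$ whose $p$-simplices are tuples $(M, a_0 < \cdots < a_p)$ with $M \in \psi_d(\R \times V)$, each $a_i$ a regular value of the height function $M \to \R$, and $M \cap ([a_0, a_p] \times V)$ compact. Cutting $M$ along the hyperplanes $\{a_i\} \times V$ produces a composable $p$-chain in $\mathcal{C}_d^V$, with the $a_i$-slices as objects and the slabs $M \cap ([a_{i-1}, a_i] \times V)$ (suitably rescaled so the length matches the ``$t$''-parameter in the definition of a morphism) as cobordisms. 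This defines a map of semi-simplicial spaces $\mathcal{D}_\bullet \to N_\bullet \mathcal{C}_d^V$; the levelwise fibers record the non-compact ``tails'' $M \cap ((\R \setminus [a_0,a_p]) \times V)$ together with the choice of boundary collar, both contractible data, so the map is a levelwise weak equivalence and hence a weak equivalence on thick realizations.

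\emph{The main obstacle} is the remaining ``forget heights'' map $|\mathcal{D}_\bullet| \to \psi_d(\R \times V)$. Pointwise, the fiber over a fixed $M$ is the semi-simplicial space of finite increasing tuples of regular values of $M \to \R$ whose enclosed interval has compact preimage; since regular values are dense (Sard's theorem), this fiber is weakly contractible (the order complex on a dense subset of an interval has contractible realization). The delicate point is promoting this pointwise statement to a weak equivalence of total spaces: for a family $X \to \psi_d(\R \times V)$ one must produce \emph{parametrized} regular values, which requires a fiberwise transversality argument patched together using the microflexibility of $\psi_d$, and this is where essentially all the technical work lives. Granting this last step, the composite
\begin{equation*}
  \Omega^V T_{d,\R \oplus V} \simeq \psi_d(\R \times V) \simeq |\mathcal{D}_\bullet| \simeq B\mathcal{C}_d^V
\end{equation*}
finishes the proof.
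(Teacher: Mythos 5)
Your outline follows, in spirit, the route of \cite{GR-W} (which is where the paper sends the reader -- the lectures themselves do not prove Theorem~\ref{thm:gmtw}), but as written it contains a genuine error in how the loops arise, and both halves of your argument break on the same point. The space $\psi_d(\R\times V)$ of \emph{all} $d$-submanifolds of $\R\times V$ that are closed as subsets, with no boundedness condition, is weakly equivalent to the Thom space $T_{d,\R\oplus V}$ itself (a manifold missing a neighborhood of the origin can be pushed to infinity to the empty manifold, and one passing near the origin can be ``zoomed in on'' until it becomes an affine $d$-plane, i.e.\ a point of the Thom space); it is \emph{not} equivalent to $\Omega^V T_{d,\R\oplus V}$. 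Your own Pontryagin--Thom map even lands in $\Omega^{\R\oplus V}T_{d,\R\oplus V}$, one more loop still; combined with your claimed equivalence $B\mathcal{C}_d^V\simeq|\mathcal{D}_\bullet|\simeq\psi_d(\R\times V)$ this would give $B\mathcal{C}_d^V\simeq\Omega^{\dim V+1}T_{d,\R\oplus V}$, which is false already for $d=1$, $V=\R$: by Example~\ref{example:formerly-2-19}, $\Omega T_{1,\R^2}\simeq\Omega\R P^2$ has two components, whereas $\Omega^2\R P^2$ has $\pi_0\cong\Z$. The fiber-contractibility claim for the ``forget heights'' map fails for the same reason: over $M=\R\times L$ with $L$ a noncompact $(d-1)$-submanifold closed in $V$, every nonempty slab $M\cap([a_0,a_p]\times V)$ is noncompact, so there are no admissible tuples at all and the fiber is empty; over a vertical affine plane $\{a\}\times P$ the fiber is disconnected. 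So the scheme ``$B\mathcal{C}_d^V\simeq\psi_d(\R\times V)\simeq\Omega^VT$'' cannot be repaired by transversality technology -- the two outer spaces are simply not both equivalent to the middle one.

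The missing idea is the delooping, which is where the $\dim(V)$ loops actually come from. One must interpolate: first compare $B\mathcal{C}_d^V$ (by your $\mathcal{D}_\bullet$-type resolution, which is fine there) with the subspace $\psi_d(n,1)\subset\psi_d(\R\times V)$, $n=\dim V+1$, of submanifolds that are bounded in the $V$-directions, so that all slabs over compact intervals are automatically compact; then prove, one coordinate direction at a time, equivalences $\psi_d(n,k)\simeq\Omega\,\psi_d(n,k+1)$ between the spaces of manifolds bounded in the last $n-k$ directions -- this is a genuine Segal/group-completion-type argument (``pushing to infinity'' in the newly freed direction), not a formal consequence of microflexibility -- and only at the top of this tower does the unconstrained space $\psi_d(n,n)\simeq T_{d,\R\oplus V}$ appear. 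Assembling $B\mathcal{C}_d^V\simeq\psi_d(n,1)\simeq\Omega^{n-1}\psi_d(n,n)\simeq\Omega^VT_{d,\R\oplus V}$ is exactly the structure of the proof in \cite{GR-W} (see also \cite{GMTW} for the $V=\R^\infty$ case); your proposal collapses the first two stages into one and thereby loses the looping entirely.
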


\begin{remark}
  We will not have time to discuss the proof of this theorem in these lectures, but let us make some historical remarks.  There is presumably not a direct map in either direction, and in any case the known proofs construct a \emph{zig-zag} of weak equivalences.  In suitable models it is compatible with taking colimit over finite-dimensional $V \subset \R^\infty$ and gives a weak equivalence
  \begin{equation*}
    B\mathcal{C}_d \simeq \colim_{V \subset \R^\infty} \Omega^V T_{d,\R \oplus V}.
  \end{equation*}
  The object on the right hand side is an \emph{infinite loop space}, and the corresponding spectrum is often called a Madsen--Tillmann spectrum and denoted $MTO(d)$.  In this form the weak equivalence was first proved in \cite[Main Theorem]{GMTW}.

  The proof for finite-dimensional $V$ was first outlined in \cite[Section 6]{galatius-2006}, and in more detail in \cite{GR-W}.  An outline is also contained in my lecture notes from the 2011 PCMI \cite{MR3114684}.
\end{remark}
\begin{corollary}
  There is\footnote{The proof of course gives a specified equivalence, or at least a zig-zag of such, not just existence of an equivalence.} an equivalence of groupoids
  \begin{equation*}
    h\mathcal{C}_d^V[(h\mathcal{C}_d^V)^{-1}] \simeq \pi_1\big(\Omega^V T_{d,\R \oplus V}\big).
  \end{equation*}
  Hence for $\dim(V) \gg d$ there is an equivalence of groupoids
  \begin{equation*}
    \Cob_d[\Cob_d^{-1}] \simeq \pi_1\big(\Omega^V T_{d,\R \oplus V}\big).
  \end{equation*}
\end{corollary}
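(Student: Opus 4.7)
The plan is to chain together three previously established inputs: the main theorem (Theorem~\ref{thm:gmtw}), the earlier lemma comparing $BC$ with $B(hC)$ for a topologically enriched category, and the general localization formula $C[C^{-1}] \xrightarrow{\simeq} \pi_1(BC)$. The second assertion of the corollary will then follow immediately from the proposition identifying $h\mathcal{C}_d^V$ with $\Cob_d$ when $\dim(V) \gg d$, together with the functoriality of localization.

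First, I would apply $\pi_1$ to the weak equivalence $B\mathcal{C}_d^V \simeq \Omega^V T_{d,\R \oplus V}$ provided by Theorem~\ref{thm:gmtw}. Since $\pi_1$ (as a fundamental \emph{groupoid}, taken with respect to the full set of basepoints, or equivalently on the collection of path components with their fundamental groups) is a homotopy invariant, this gives an equivalence of groupoids $\pi_1(B\mathcal{C}_d^V) \simeq \pi_1(\Omega^V T_{d,\R \oplus V})$. Next, the lemma preceding the theorem statement, applied to $C = \mathcal{C}_d^V$, gives that the canonical map $B\mathcal{C}_d^V \to B(h\mathcal{C}_d^V)$ is $2$-connected, so the induced functor $\pi_1(B\mathcal{C}_d^V) \to \pi_1(B(h\mathcal{C}_d^V))$ is an equivalence of groupoids. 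One needs to verify the lemma's hypothesis of local path connectedness of the morphism spaces $\mathcal{C}_d^V(M_0,M_1)$; this follows from the description~(\ref{eq:9}) as a disjoint union of classifying spaces $B\Diff_\partial(W)$ of topological groups, which are locally path connected, or alternatively (using Example~\ref{ex:Sing-real}) by first replacing $\mathcal{C}_d^V$ by a DK-equivalent category with CW morphism spaces.

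Finally, the general localization result discussed in the remark after Proposition~\ref{prop:groupoid-case} gives a canonical equivalence $h\mathcal{C}_d^V[(h\mathcal{C}_d^V)^{-1}] \xrightarrow{\simeq} \pi_1(B(h\mathcal{C}_d^V))$. Composing the three equivalences
\begin{equation*}
  h\mathcal{C}_d^V[(h\mathcal{C}_d^V)^{-1}] \xrightarrow{\simeq} \pi_1(B(h\mathcal{C}_d^V)) \xleftarrow{\simeq} \pi_1(B\mathcal{C}_d^V) \xrightarrow{\simeq} \pi_1(\Omega^V T_{d,\R \oplus V})
\end{equation*}
(and choosing an inverse to the middle arrow) yields the first equivalence in the statement.

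For the second assertion, I use the proposition that the functor $h\mathcal{C}_d^V \to \Cob_d$ is an equivalence of categories when $\dim(V) \gg d$ (explicitly $\dim V \geq 2d+1$ suffices). Since localization at all morphisms is functorial on categories and sends equivalences to equivalences, this induces an equivalence $h\mathcal{C}_d^V[(h\mathcal{C}_d^V)^{-1}] \simeq \Cob_d[\Cob_d^{-1}]$, which combined with the first part of the corollary produces the desired equivalence $\Cob_d[\Cob_d^{-1}] \simeq \pi_1(\Omega^V T_{d,\R \oplus V})$. The only real subtlety in the whole argument is the local path connectedness hypothesis needed to invoke the $2$-connectivity lemma; everything else is formal assembly of results already in hand.
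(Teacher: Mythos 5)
Your proposal is correct and is essentially the argument the paper intends: the corollary is just the assembly of Theorem~\ref{thm:gmtw}, the $2$-connectivity lemma giving $h\mathcal{C}_d^V[(h\mathcal{C}_d^V)^{-1}] \simeq \pi_1(B\mathcal{C}_d^V)$, and the proposition identifying $h\mathcal{C}_d^V$ with $\Cob_d$ for $\dim(V) \gg d$. The only slight imprecision is invoking the weak equivalence~(\ref{eq:9}) for local path connectedness (which is not a weak-homotopy-invariant property, and~(\ref{eq:9}) concerns $\mathcal{C}_d$ rather than $\mathcal{C}_d^V$), but your alternative route --- replacing $\mathcal{C}_d^V$ by the DK-equivalent category with morphism spaces $|\Sing(-)|$, or checking directly that the quotient of the embedding spaces is locally path connected --- closes this gap.
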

The point is that it is now purely a homotopy theoretic question to determine the fundamental groupoid of $\Omega^V T_{d,\R \oplus V}$, which is closely related to determining $\pi_n(T_{d,\R \oplus V})$ and $\pi_{n+1}(T_{d,\R \oplus V})$ for $n  = \dim(V)$.  Let us illustrate this with a low-dimensional example.
\begin{example}\label{example:formerly-2-19}
  Let us take $d = 1$ and $V = \R$ so that objects and morphisms may be visualized in the plane.  Then $\Gr_1(\R^2) = \R P^1$ is homeomorphic to a circle, while the bundle $U^\perp_{1,\R}$ may be identified with the M\"obius strip.  Therefore $T_{1,\R}$ may be identified with $\R P^2$, so that $\Omega T_{1,\R}$ has two components, each of which is homotopy equivalent to $\Omega S^2$ which has fundamental group $\pi_2(S^2) = \Z$.  The fundamental groupoid therefore has two isomorphism classes of objects, and all objects have infinite cyclic automorphism groups.
\end{example}



\begin{remark}\label{rem:tangential}
  In fact the references given above prove slightly more, about a more general notion of cobordism category, where the objects and morphisms are equipped with  a \emph{tangential structure}, given by specifying a space $\Theta$ with a continuous action of the Lie group $\mathrm{GL}_d(\R)$.  One then defines a topologically enriched category $\mathcal{C}_{\Theta}^V$ in which morphisms are cobordisms $W \subset [0,t] \times V$ as before, equipped with a $\mathrm{GL}_d(\R)$-equivariant continuous map $\ell: \Fr(W) \to \Theta$, where $\Fr(W)$ denotes the frame bundle of $TW$.  Objects are $M \subset V$ equipped with $\mathrm{GL}_d(\R)$-equivariant maps from the frame bundle of $\R \oplus TM$.  A popular choice is $\Theta = \{\pm 1\}$ on which $\mathrm{GL}_d(\R)$ acts by multiplication by the sign of the determinant; then an equivariant map from the frame bundle is the same as an \emph{orientation}.  A version of the main theorem holds in this generality, the only difference is that $T_{d,\R \oplus V}$ should be replaced with a space $T_{\Theta,\R \oplus V}$ in which the Grassmannian $\Gr_d(\R \oplus V)$ has been replaced by a space whose points are pairs of a $d$-dimensional linear subspace $X \subset \R \oplus V$ together with an equivariant map $\ell: \Fr(X) \to \Theta$.

  Another interesting variant concerns manifolds equipped with an action of a finite group $G$.  In this case the embedded version of the cobordism category is defined for a finite dimensional inner-product space $V$, equipped with an isometric action of $G$; objects are then submanifolds of $V$ invariant under the action and morphisms are submanifolds of $[0,t] \times V$, invariant when $[0,t]$ is given the trivial action.  Under the assumption that $\dim(V^G) > d$, there is again a homotopy theoretic formula for the homotopy type of the classifying space of this ``equivariant bordism category''.  Passing to the colimit over subrepresentations $V$ of the direct sum of countably many copies of the regular representation $\R[G]$, one gets a theorem for a cobordism category of ``abstract'' $G$-equivariant manifolds.  See \cite{SzucsGalatius} for more details on all this, including precise statements.
\end{remark}

To finish this section, let us state the classification result in yet another form.
\begin{corollary}
  Let $D$ be a small groupoid, then there are equivalences of groupoids
  \begin{align*}
    \mathrm{Fun}(h \mathcal{C}_d^V,D) & \simeq \pi_1 \Map(B(h \mathcal{C}_d^V),BD)\\
                                      & \simeq \pi_1\Map(B\mathcal{C}_d^V,BD)\\
    & \simeq \pi_1\Map(\Omega^V T_{d,\R \oplus V},BD).
  \end{align*}
\end{corollary}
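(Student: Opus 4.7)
The plan is to prove the three equivalences in the corollary separately. The unifying observation is that, since $D$ is a small groupoid, $BD$ is a disjoint union of $K(\pi,1)$'s (Example~\ref{ex:groupoid}); in particular $BD$ is a $1$-type, so $\Map(-, BD)$ sends $2$-connected maps to weak equivalences and its fundamental groupoid depends only on the source up to $1$-type.

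The two easier equivalences are the third and second. The third equivalence, $\pi_1\Map(B\mathcal{C}_d^V, BD) \simeq \pi_1\Map(\Omega^V T_{d,\R\oplus V}, BD)$, follows immediately by applying $\Map(-, BD)$ to the zig-zag of weak equivalences $B\mathcal{C}_d^V \simeq \Omega^V T_{d,\R\oplus V}$ provided by the main theorem (Theorem~\ref{thm:gmtw}) and then passing to fundamental groupoids. The second equivalence, $\pi_1\Map(B(h\mathcal{C}_d^V), BD) \simeq \pi_1\Map(B\mathcal{C}_d^V, BD)$, follows from the lemma immediately preceding the corollary, which produces a $2$-connected map $B\mathcal{C}_d^V \to B(h\mathcal{C}_d^V)$; the hypothesis of local path connectedness of the morphism spaces of $\mathcal{C}_d^V$ is satisfied because these are quotients of smooth embedding manifolds.

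The substantive step, and main obstacle, is the first equivalence $\mathrm{Fun}(h\mathcal{C}_d^V, D) \simeq \pi_1\Map(B(h\mathcal{C}_d^V), BD)$. For this I would establish the general assertion that for any small category $E$ and small groupoid $D$ there is a natural equivalence of groupoids $\mathrm{Fun}(E, D) \simeq \pi_1\Map(BE, BD)$ (noting that $\mathrm{Fun}(E,D)$ is automatically a groupoid because every natural transformation into a groupoid is invertible). The strategy is to reduce both sides to functors between groupoids: by the universal property of localization, functors $E \to D$ factor uniquely through $E \to E[E^{-1}]$, and by Proposition~\ref{prop:groupoid-case} (together with the universal property immediately after it) this localization is identified with $\pi_1(BE, N_0 E)$, so $\mathrm{Fun}(E,D) \simeq \mathrm{Fun}(\pi_1(BE, N_0 E), D)$. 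On the other hand, since $BD$ is a disjoint union of $K(\pi,1)$'s, passing to fundamental groupoids yields $\pi_1\Map(BE, BD) \simeq \mathrm{Fun}(\pi_1(BE), \pi_1(BD)) \simeq \mathrm{Fun}(\pi_1(BE, N_0 E), D)$, where the last step uses the canonical equivalence $\pi_1(BD) \simeq D$ and the equivalence $\pi_1(BE, N_0 E) \hookrightarrow \pi_1(BE)$.

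Comparing the two identifications gives the desired equivalence. The only nontrivial homotopical input is the classification of maps and homotopies into a $K(\pi,1)$ (namely, that they are controlled by the induced functor on fundamental groupoids); everything else is formal manipulation of universal properties established earlier in the text.
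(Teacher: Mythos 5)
Your proposal is correct and matches the paper's (implicit) argument: the corollary is obtained exactly by combining the Lecture~1 corollary identifying $\mathrm{Fun}(C,D)$ with $\pi_1\Map(BC,BD)$ via a $2$-connected map, the lemma that $B\mathcal{C}_d^V \to B(h\mathcal{C}_d^V)$ is $2$-connected, and the main theorem's weak equivalence $B\mathcal{C}_d^V \simeq \Omega^V T_{d,\R\oplus V}$. Your extra remarks (local path connectedness of the morphism spaces, $BD$ being a $1$-type) are exactly the hypotheses the paper's prior statements require.
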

Imprecisely the poor man's field theories, namely functors from $\Cob_d$ to a groupoid $D$, are up to natural equivalence of functors the same as continuous maps from $\Omega^VT_{d,\R \oplus V}$ to $BD$, up to homotopy.

\section*{Lecture 3: More structure}\refstepcounter{section}
\addcontentsline{toc}{section}{Lecture 3: More structure}
\label{sec:more-structure}

\subsection{Symmetric monoidal structures}
\label{sec:symmetric-monoidal}

The ``abstract'' cobordism category $\Cob_d$ admits the structure of a \emph{symmetric monoidal category} with respect to disjoint union.  This monoidal structure is given by the \emph{data} of
\begin{itemize}
\item an object $\emptyset \in \Cob_d$,
\item a functor $\amalg: \Cob_d \times \Cob_d \to \Cob_d$,
\item isomorphisms $M_0 \amalg (M_1 \amalg M_2) \cong (M_0 \amalg M_1) \amalg M_2$ forming a natural transformation of functors $\Cob_d \times \Cob_d \times \Cob_d \to \Cob_d$,
\item isomorphisms $M_0 \amalg M_1 \cong M_1 \amalg M_0$ forming a natural transformation of functors $\Cob_d \times \Cob_d \to \Cob_d$,
\item isomorphisms $\emptyset \amalg M_0 \cong M_0$ forming a natural transformation of functors $\Cob_d \to \Cob_d$,
\end{itemize}
and this data satisfies the axioms of a symmetric monoidal structure.  See e.g.\ MacLane's book for the full list of axioms (or wikipedia, or many other places on the internet).

Another example of a symmetric monoidal category is that of complex vector spaces, or more generally modules over some commutative ring $R$, with symmetric monoidal structure given by $\otimes_R$, the trivial 1-dimensional module $1 = R$, all the usual isomorphisms $N_0 \otimes_R (N_1 \otimes_R N_2) \cong (N_0 \otimes_R N_1) \otimes_R N_2$, $R \otimes_R N \cong N$, etc.  Let us write $\mathrm{Mod}_R$ for the category of $R$-modules, equipped with this symmetric monoidal structure.  One version of the classical definition of a topological field theory then goes as follows.
\begin{definition}
  An ($R$-linear) \emph{topological field theory}\footnote{To distinguish from more elaborate notions, this is nowadays sometimes called a $(d-1,d)$-theory, because $\Cob_d$ involves manifolds of dimension $d-1$ and $d$.} is a symmetric monoidal functor $Z: \Cob_d \to \Mod_R$.

  More generally, for any symmetric monoidal category $D$, a $D$-valued topological field theory is a symmetric monoidal functor $Z: \Cob_d \to D$.
\end{definition}
Let us again skip the detailed definition, but recall that a (strong) symmetric monoidal functor is a functor together with specified isomorphisms $1 \to Z(\emptyset)$ and $Z(M_0) \otimes_R Z(M_1) \to Z(M_0 \amalg M_1)$, natural in $M_0$ and $M_1$, satisfying three axioms (having to do with the associators, unitors, and symmetries in the symmetric monoidal categories).

There are also versions of the definition in which objects and cobordisms are equipped with tangential structures, as in Remark~\ref{rem:tangential}.

\begin{definition}
  Let $D$ be a symmetric monoidal category.  An object $x$ is \emph{invertible} if there exists another object $\overline{x}$ such that $x \otimes \overline{x}$ is isomorphic to $1$.

  A field theory $Z: \Cob_d \to D$ is \emph{invertible} if it sends all object of $\Cob_d$ to invertible objects of $D$ and all morphisms to isomorphisms.
\end{definition}
In fact the condition on objects is automatic: the cylinder $[0,1] \times M$ as a cobordism from $M \amalg M$ to $\emptyset$ will be sent to an (iso)-morphism $Z(M) \otimes Z(M) \to Z(\emptyset) \cong 1$, so $Z(M)$ is invertible with inverse $Z(M)$.  (In tangentially structured versions, the inverse of $Z(M)$ can be taken as $Z(-M)$, the value of $Z$ on $M$ given the opposite tangential structure.)

There is a subcategory $D^\sim \subset D$ whose objects are the invertible objects and whose morphisms are the isomorphisms.  Then a $D$-valued invertible field theory is the same thing as a symmetric monoidal functor $\Cob_d \to D^\sim$.  Hence we may forget about the ambient $D$ and just consider $D^\sim$-valued topological field theories.  Thus an invertible topological field theory is essentially one whose target category is a rigid symmetric monoidal groupoid, defined as follows\footnote{For a general monoidal \emph{category} the adjective ``rigid'' is the requirement that all objects admit \emph{duals} but that is the same as admitting inverses up to isomorphism in the case where all morphisms are isomorphisms.  Another name for ``rigid symmetric monoidal groupoid'' is ``Picard groupoid''.}.
\begin{definition}
  A symmetric monoidal groupoid $D$ is \emph{rigid} if all objects are invertible.
\end{definition}

\subsection{Symmetric monoidal structure on the universal groupoid under $C$}

We have already studied how to classify functors from $\Cob_d$ into the underlying groupoid of $D$, ignoring the symmetric monoidal structure.  Indeed, they are ``the same thing'' as functors from $\Cob_d[\Cob_d^{-1}] \simeq \pi_1(B\Cob_d)$, which we identified with the fundamental groupoid of another space $\Omega^V T_{d,v}$ for $\dim(V) \gg d$.  This answer may be enhanced to a classification of invertible topological field theories provided we have good answers to the following two (vaguely worded) questions.
\begin{itemize}
\item Let $X$ be a space and $C = \pi_1(X)$ the fundamental groupoid.  What extra structure on $X$ is necessary to functorially induce a symmetric monoidal structure on $C$?
\item Let $C$ be a small category and $X = BC$.  What extra structure on $X$ does a symmetric monoidal structure on $C$ functorially induce?
\end{itemize}

There is a quite straightforward answer to these questions, based on the fact that $C \mapsto BC$ preserves products and takes natural transformations to homotopies, and conversely $X \mapsto \pi_1(X)$ takes maps to functors and homotopies to natural isomorphisms\footnote{A possible pitfall here is that $C \mapsto BC$ does not send vertical composition of natural transformations to concatenation of homotopies, it only does so up to homotopy relative to endpoints.  Therefore it is better to only consider \emph{homotopy classes} of homotopies: identify maps $[0,1] \times X \to Y$ that are homotopic relative to $\{0,1\} \times X$.  Conversely, such a homotopy class of homotopies is sufficient to induce a natural transformation of functors $\pi_1(X) \to \pi_1(Y)$.}.  Indeed, the following list of structure on $X$ will induce a symmetric monoidal structure on $\pi_1(X)$.
\begin{itemize}
\item a base point $1 \in X$,
\item a ``product'' map $\mu: X \times X\to X$,
\item a (homotopy class of a) homotopy $[0,1] \times X \times X \times X \to X$, between $(x_0,x_1,x_2) \mapsto \mu(x_0,\mu(x_1,x_2))$ and $(x_0,x_1,x_2) \mapsto \mu(\mu(x_0,x_1),x_2)$,
\item a (homotopy class of a) homotopy $[0,1] \times X \times X \to X$ between $(x_0,x_1) \mapsto \mu(x_0,x_1)$ and $(x_0,x_1) \mapsto \mu(x_1,x_0)$,
\item a (homotopy class of a) homotopy of maps $X \to X$ from the identity to the map $x \mapsto \mu(1,x)$,
\end{itemize}
subject to certain conditions, obtained by translating the axioms of a symmetric monoidal category (according to ``category'' $\mapsto$ ``space'', ``functor'' $\mapsto$ ``continuous map'', ``natural transformation'' $\mapsto$ ``homotopy class of a homotopy'', ``object'' $ \mapsto$ ``point'', ``vertical composition of natural transformations'' $\mapsto$ ``concatenation of homotopies'').  Conversely if $C$ is given a symmetric monoidal structure then $X = BC$ acquires this kind of structure.  Finally, these processes are inverse in the sense that $\gamma: C \to \pi_1(BC)$ is a symmetric monoidal functor, and a symmetric monoidal equivalence if all morphisms in $C$ are isomorphisms.

What seems to occur in nature is usually not the exact list above of data on a space $X$, but rather an ``$E_n$-structure''.  Let us briefly recall what that is, and explain why it is sufficient for inducing a symmetric monoidal structure on $\pi_1(X)$, for $n \geq 3$.

\subsection{Little disks}
\label{sec:little-disks}

Let $\mathcal{D}_n(k)$ be the space of embeddings $j:\{1, \dots, k\} \times \mathrm{int}(D^n) \to \mathrm{int}(D^n)$ of the form
\begin{equation*}
  (i,x) \mapsto v_i + t_i x,
\end{equation*}
for some $t_i \in (0,1)$ and $v_i \in \mathrm{int}(D^n)$, i.e.\ embeddings that on each open disk is given by a translation and a scaling by a positive number.  Use $v_i$ and $t_i$ to topologize this as a subspace
\begin{equation*}
  \mathcal{D}_n(k) \subset \big((0,1) \times \mathrm{int}(D^n)\big)^k.
\end{equation*}
These spaces come with the following structure
\begin{itemize}
\item an identity element $1 \in \mathcal{D}_n(1)$
\item an action of $S_k$ on $\mathcal{D}_n(k)$, by permuting input disks
\item composition maps for $k,k' \geq 0$ and $i \in \{1, \dots, k\}$
  \begin{equation*}
    \circ_i: \mathcal{D}_n(k) \times \mathcal{D}_n(k') \to \mathcal{D}_n(k + k' -1),
  \end{equation*}
  where $j \circ_i j'$ is defined by inserting the codomain $\Int(D^n)$ of $j'$ into the $i$th copy of $\Int(D^n)$ in the domain of $j$.
\end{itemize}
We will write $\mathcal{D}_n$ for the data of the spaces $\mathcal{D}_n(k)$ for $k \geq 0$ with their symmetric group actions, the identity $1 \in \mathcal{D}_n(1)$, and the compositions $\circ_i$.  It satisfies the associativity axioms
\begin{align*}
  (j \circ_i j') \circ_{i + i' -1} j'' & = j \circ_i (j' \circ_{i'} j'')\\
  (j \circ_{i'} j') \circ_{i + k' -1} j'' & = (j \circ_i j') \circ_{i'} j''
\end{align*}
for $j \in \mathcal{D}_n(k)$, $j' \in \mathcal{D}_n(k')$, and $j'' \in \mathcal{D}_n(k'')$, as well unit axioms
\begin{align*}
  1 \circ_1 j & = j\\
  j \circ_i 1 & = j,
\end{align*}
and an axiom expressing how composition interacts with the action of symmetric groups.  Together, these axioms precisely express that $\mathcal{D}_d$ is an \emph{operad}\footnote{See e.g.\ \cite[chapter 2]{FresseI} or \cite{Markl} for the axioms given here, and a discussion of how they are equivalent to May's axioms.} in spaces.  This operad is the \emph{little $n$-disk operad}.

Another example of an operad is the \emph{endomorphism operad} $\mathrm{End}_X$ of a space $X$.  Its $k$th space is
\begin{equation*}
  \mathrm{End}_X(k) = \Map(X^k,X),
\end{equation*}
the symmetric group acts by permuting factors in $X^k$, the identity $1 \in \mathrm{End}_X(1)$ is the identity map, and the composition $f \circ_i f'$ is defined by inserting the codomain $X$ of $f'$ into the $i$th factor in the domain of $f$, i.e.\
\begin{equation*}
  (f \circ_i f')(x_1, \dots, x_{k + k'-1}) = (x_1, \dots, x_{i-1},f(x_i,\dots, x_{i + k'-1}),x_i,\dots, x_{k + k'-1}).
\end{equation*}

\begin{definition}
  An \emph{algebra} for $\mathcal{D}_n$ is a space $X$ together with an operad map $\mathcal{D}_n \to \mathrm{End}_X$, i.e.\ continuous maps
  \begin{equation*}
    \mu_k: \mathcal{D}_n(k) \to \mathrm{End}_X(k) = \Map(X^k,X),
  \end{equation*}
  for each $k \in \Z_{\geq 0}$, preserving all the structure (symmetric group action, identity, compositions $\circ_i$).

  A \emph{morphism} of $\mathcal{D}_n$-algebras is a continuous map $f: X \to Y$ with the property that the diagrams
  \begin{equation*}
    \xymatrix{
      \mathcal{D}_n(k) \times X^k \ar[r]^-{\mu_k} \ar[d]_{\mathrm{id} \times f^k} & X \ar[d]^f\\
      \mathcal{D}_n(k) \times Y^k \ar[r]^-{\mu_k} & Y
    }
  \end{equation*}
  commute for all $k \in \Z_{\geq 0}$.
\end{definition}

\begin{proposition}\label{prop:associate-symmetric-to-En}
  For $n \geq 3$ there is a functorial way to associate a symmetric monoidal structure on $\pi_1(X)$ to a space $X$ with $E_n$ structure.
\end{proposition}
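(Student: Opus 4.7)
My plan is to apply the fundamental groupoid construction $\pi_1(-)$ levelwise to the little disks operad $\mathcal{D}_n$ and to its action on $X$, and to observe that for $n \geq 3$ the resulting operad in groupoids is of the type whose algebras are exactly symmetric monoidal groupoids.

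First, I would argue that, since $\pi_1$ preserves finite products, the operad structure maps of $\mathcal{D}_n$ (composition $\circ_i$, unit $1 \in \mathcal{D}_n(1)$, symmetric group actions) assemble into an operad $\pi_1(\mathcal{D}_n)$ in small groupoids. Applying $\pi_1$ to the action maps $\mathcal{D}_n(k) \times X^k \to X$ and using $\pi_1(X^k) \cong \pi_1(X)^k$ then yields functors
\begin{equation*}
  \bar{\mu}_k \colon \pi_1(\mathcal{D}_n(k)) \times \pi_1(X)^k \longrightarrow \pi_1(X)
\end{equation*}
making $\pi_1(X)$ into a $\pi_1(\mathcal{D}_n)$-algebra in groupoids, functorially in morphisms of $\mathcal{D}_n$-algebras.

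Second, the key input is that for $n \geq 3$ each space $\mathcal{D}_n(k)$ is simply connected. It deformation retracts onto the configuration space $\mathrm{Conf}_k(\R^n)$, and the Fadell--Neuwirth fiber sequence
\begin{equation*}
  \R^n \setminus \{(k{-}1)\text{ points}\} \longrightarrow \mathrm{Conf}_k(\R^n) \longrightarrow \mathrm{Conf}_{k-1}(\R^n),
\end{equation*}
combined with the fact that a complement of finitely many points in $\R^n$ is homotopy equivalent to a wedge of $(n-1)$-spheres and hence $(n-2)$-connected, shows by induction on $k$ that $\mathrm{Conf}_k(\R^n)$ is $(n-2)$-connected. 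Thus each $\pi_1(\mathcal{D}_n(k))$ is a nonempty contractible groupoid with an $S_k$-action, i.e., $\pi_1(\mathcal{D}_n)$ is an $E_\infty$-operad in groupoids (of Barratt--Eccles type).

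Third, an algebra in groupoids over such an $E_\infty$-operad unwinds to precisely the data of a symmetric monoidal groupoid: any object of $\pi_1(\mathcal{D}_n(2))$ gives a tensor functor $\otimes \colon \pi_1(X) \times \pi_1(X) \to \pi_1(X)$; the unique isomorphism in $\pi_1(\mathcal{D}_n(3))$ between the two iterated composites $(-\otimes -)\otimes -$ and $-\otimes(-\otimes -)$ supplies the associator; the nontrivial element of $S_2$ acting on $\pi_1(\mathcal{D}_n(2))$ produces the symmetry; $\pi_1(\mathcal{D}_n(0))$ designates a unit object, and the unique morphism in $\pi_1(\mathcal{D}_n(1))$ between the identity and the composition with the unit furnishes the unitor. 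The pentagon, hexagon and unit coherences reduce to equalities of parallel morphisms in the contractible groupoids $\pi_1(\mathcal{D}_n(k))$ for $k \leq 5$, and such equalities hold automatically.

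The main obstacle is the connectivity statement for $\mathrm{Conf}_k(\R^n)$; once $\mathcal{D}_n(k)$ is known to be simply connected for all $k$ when $n \geq 3$, the rest of the argument is a mechanical translation between operad algebras and symmetric monoidal structures. (Note that the hypothesis $n \geq 3$ is sharp at the level of this argument: for $n = 2$ the groupoids $\pi_1(\mathcal{D}_2(k))$ have nontrivial automorphisms and one instead obtains only a braided monoidal structure.)
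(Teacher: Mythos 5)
Your proposal is correct and follows essentially the same route as the paper: the key input is the same Fadell--Neuwirth/configuration-space argument showing $\mathcal{D}_n(k)$ is simply connected for $n\geq 3$ (the paper's Lemma~\ref{lem:simple-connectivity}), and the symmetric monoidal data (tensor from a chosen point of $\mathcal{D}_n(2)$, associator and symmetry from unique isomorphisms in the fundamental groupoids, coherences automatic by simple connectivity) is built exactly as in the paper's proof. The only difference is packaging: you phrase it as $\pi_1(\mathcal{D}_n)$ being an $E_\infty$-operad in groupoids acting on $\pi_1(X)$, whereas the paper unwinds the same choices directly.
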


\begin{lemma}\label{lem:simple-connectivity}
  The space $\mathcal{D}_n(k)$ is homotopy equivalent to the \emph{ordered configuration space}
  \begin{equation*}
    \mathrm{Conf}_k(\R^n) = \{(x_1, \dots, x_k) \in (\R^n)^k \mid \text{$x_i \neq x_j$ for $i \neq j$}\}.
  \end{equation*}
  In particular it is simply connected for all $k$ when $n \geq 3$.
\end{lemma}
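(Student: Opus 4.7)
The plan is to compare $\mathcal{D}_n(k)$ with $\mathrm{Conf}_k(\R^n)$ via the \emph{centers} map, and then to prove simple connectivity by an induction using the Fadell--Neuwirth fibration.

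First I would define the center map $\mathrm{cen}: \mathcal{D}_n(k) \to \mathrm{Conf}_k(\mathrm{int}(D^n))$ sending an embedding $(i,x) \mapsto v_i + t_i x$ to the tuple $(v_1,\dots,v_k)$. This is well defined and continuous because the disjointness of the images forces $v_i \neq v_j$, and the open inclusion $\mathcal{D}_n(k) \subset ((0,1) \times \mathrm{int}(D^n))^k$ identifies $\mathrm{cen}$ with (the restriction of) a coordinate projection. Since $\mathrm{int}(D^n) \cong \R^n$, there is a homeomorphism $\mathrm{Conf}_k(\mathrm{int}(D^n)) \cong \mathrm{Conf}_k(\R^n)$, so it suffices to show $\mathrm{cen}$ is a homotopy equivalence.

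For this I would exploit convexity of the fibers: over a configuration $(v_1,\dots,v_k)$, the set of admissible radius tuples $(t_1,\dots,t_k) \in (0,1)^k$ is defined by the linear inequalities $|v_i - v_j| > t_i + t_j$ (for $i \neq j$) and $|v_i| + t_i < 1$, hence is a non-empty convex open subset of $\R^k$. I would then construct an explicit continuous section $\sigma$, for instance by setting each $t_i$ equal to a fixed small multiple of $\min_{j \neq i} |v_i - v_j|$ together with $1 - |v_i|$, which lies in every fiber. A straight-line homotopy $((1-s)t_i + s\sigma(\mathrm{cen}(j))_i)$ stays in the (convex) fiber throughout, so $\sigma \circ \mathrm{cen} \simeq \mathrm{id}$; since $\mathrm{cen} \circ \sigma = \mathrm{id}$ automatically, this exhibits $\mathrm{cen}$ as a homotopy equivalence.

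For the simple connectivity statement with $n \geq 3$, the plan is to use the Fadell--Neuwirth fibration
\begin{equation*}
  \R^n \setminus \{p_1,\dots,p_{k-1}\} \longrightarrow \mathrm{Conf}_k(\R^n) \xrightarrow{\ \mathrm{forget}\ } \mathrm{Conf}_{k-1}(\R^n),
\end{equation*}
given by forgetting the last point. For $n \geq 3$ the fiber $\R^n \setminus \{p_1,\dots,p_{k-1}\}$ has the homotopy type of a wedge of $(n-1)$-spheres, which is simply connected. I would then induct on $k$: the base case $\mathrm{Conf}_1(\R^n) = \R^n$ is contractible, and the inductive step follows from the relevant piece of the long exact sequence
\begin{equation*}
  \pi_1(\text{fiber}) \longrightarrow \pi_1(\mathrm{Conf}_k(\R^n)) \longrightarrow \pi_1(\mathrm{Conf}_{k-1}(\R^n))
\end{equation*}
in which the outer two groups vanish.

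The main obstacle is the first part: namely producing a clean, globally continuous section $\sigma$, and being careful that the straight-line homotopy of radii actually remains inside the fibers (this is exactly where convexity of the defining inequalities is used). Once that is in place, the rest is standard fibration bookkeeping.
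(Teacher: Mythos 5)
Your proposal is correct and follows essentially the same route as the paper: project to the centers to get a map onto $\mathrm{Conf}_k(\mathrm{int}(D^n)) \cong \mathrm{Conf}_k(\R^n)$ with non-empty convex fibers of radii, conclude a homotopy equivalence (the paper cites the general fact that a submersion with contractible fibers is an equivalence, where you instead build an explicit section and fiberwise straight-line homotopy, which is a fine and if anything more self-contained variant), and then run the same Fadell--Neuwirth induction for simple connectivity when $n \geq 3$. One cosmetic point: since the little disks are \emph{open}, disjointness and containment are the weak inequalities $|v_i - v_j| \geq t_i + t_j$ and $|v_i| + t_i \leq 1$ rather than the strict ones you wrote, but the fiber is still convex and your section still lies in it, so nothing in the argument changes.
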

\begin{proof}[Proof sketch]
  Compose the inclusion $\mathcal{D}_n(k) \subset ((0,1) \times \mathrm{int}(D^n))^k$ with the projection to $(\mathrm{int}(D^n))^k$.  Its image is the set of $k$-tuples of distinct elements of $\mathrm{int}(D^n) \subset \R^n$, and the fiber over a $k$-tuple of distinct elements is a non-empty convex open subset of $(0,1)^k$.  After choosing a diffeomorphism $\mathrm{int}(D^n) \to \R^n$, we get a submersion $\mathcal{D}_n(k) \to \mathrm{Conf}_k(\R^n)$ with contractible fibers, which implies being a homotopy equivalence.

  Forgetting the last point gives a map $\mathrm{Conf}_{k+1}(\R^n) \to \mathrm{Conf}_k(\R^n)$ which is a fiber bundle whose fiber over $(x_1,\dots, x_k)$ is
  \begin{equation*}
    \R^n \setminus \{x_1, \dots, x_k\} \simeq \bigvee^k S^{n-1}.
  \end{equation*}
  This fiber is simply connected for $n \geq 3$, so in this case the long exact sequence of homotopy groups gives simple connectivity of $\mathrm{Conf}_k(\R^n)$ by induction on $k$.
\end{proof}

\begin{proof}[Proof of Proposition~\ref{prop:associate-symmetric-to-En}]
  Applying $\pi_1$ to the structure maps gives
  \begin{equation}\label{eq:7}
    \pi_1(\mathcal{D}_n(k)) \to \mathrm{Fun}(\pi_1(X) \times \dots \times \pi_1(X),\pi_1(X)),
  \end{equation}
  associating functors $(\pi_1(X))^k \to \pi_1(X)$ to objects of $\pi_1(\mathcal{D}_n(k))$ and natural isomorphisms to isomorphisms in $\pi_1(\mathcal{D}_n(k))$.
  
  Choose\footnote{In case the reader feels unease about this choice, let us remind them that defining e.g.\ the disjoint union functor $\amalg: \Sets \times \Sets \to \Sets$ also involves a choice, viz.\ of how to replace two sets by ``disjoint copies''.  Here and there, any two choices give naturally isomorphic functors.} an object of $\pi_1(\mathcal{D}_n(2))$, i.e.\ a point in the space $\mathcal{D}_n(2)$, and let $\otimes: \pi_1(X) \times \pi_1(X) \to \pi_1(X)$ be the image of $m$ under~(\ref{eq:7}).

  The two points $m \circ_1 m$ and $m \circ_2 m$ of $\mathcal{D}_n(3)$ are likely not equal, but there's a unique isomorphism between them in $\pi_1(\mathcal{D}_n(3))$ by Lemma~\ref{lem:simple-connectivity}.  Since the structure maps preserve $\circ_1$ and $\circ_2$, the image under~(\ref{eq:7}) of this isomorphism $m \circ_1 m \to m \circ_2 m$ in $\pi_1(\mathcal{D}_n(3))$ is a natural transformation between the two functors
  \begin{align*}
    \pi_1(X) \times \pi_1(X) \times \pi_1(X) & \to \pi_1(X)\\
    (x,y,z) \quad \quad & \mapsto
                          \begin{cases}
                            x \otimes (y \otimes z)\\
                            (x \otimes y) \otimes z,
                          \end{cases}
  \end{align*}
  and we take this to be the associator.

  The action of the transposition $(1 2) \in S_2$ on $m$ gives a new point $(1 2).m \in \mathcal{D}_n(2)$ but since $\mathcal{D}_n(2)$ is simply connected (in fact it is homotopy equivalent to $S^{n-1}$) there is a unique isomorphism $m \to (1 2).m$ in $\pi_1(\mathcal{D}_n(2))$.  Take the image under~(\ref{eq:7}) of this isomorphism as the symmetry.  Finally, as monoidal unit we take the image of the unique point in $\mathcal{D}_n(0)$
  under~(\ref{eq:7}).

  It remains to verify that the axioms are satisfied, but that is in fact easy.  For example, the pentagon axiom asserts that two natural transformations of functors $(\pi_1(X))^4 \to \pi_1(X)$ are equal.  The two natural transformations come from the two ways of using the associator to give a natural isomorphism $x \otimes(y \otimes (z \otimes w)) \to ((x \otimes y) \otimes z) \otimes w$.  Tracing definitions, we see that they are the image under~(\ref{eq:7}) of two particular isomorphisms $m \circ_2 (m\circ_2 m) \to (m \circ_1 m) \circ_1 m$ in  $\pi_1(\mathcal{D}_n(4))$.  But since $\mathcal{D}_n(4)$ is simply connected, \emph{any} two isomorphisms between two objects in its fundamental groupoid must be equal.  The other axioms of a symmetric monoidal category are proved by the same method.
\end{proof}

\begin{remark}
  A similar argument gives a \emph{monoidal structure} on the fundamental groupoid of an $E_1$ space, and a \emph{braided monoidal structure} on the fundamental groupoid of an $E_2$ space.
\end{remark}


\begin{example}\label{ex:loop-space}
  Finally, let us give the \emph{standard example} of an $E_n$ algebra, which was in fact the historical motivation for the development of operads.

  Any embedding $j: \{1, \dots, k\} \times \mathrm{int}(D^n) \to \mathrm{int}(D^n)$ may be one-point compactified to a based map in the other direction
  \begin{equation*}
    S^n \to S^n \vee \dots \vee S^n,
  \end{equation*}
  where we regard $S^n$ as the one-point compactification of $\Int(D^n)$.  Hence, if $Y$ is any based space we obtain a map
  \begin{equation*}
    j_*: \Omega^n Y \times \dots \Omega^n  Y\to \Omega^n Y,
  \end{equation*}
  depending continuously on $j \in \mathcal{D}_n(k)$.  It is not hard to verify that this construction makes $X = \Omega^n Y$ into a $\mathcal{D}_n$-algebra.  Moreover, it is \emph{group-like} (the monoidal category $\pi_1(X)$ is rigid).

  More surprisingly, it turns out that up to homotopy the space $Y$ may be reconstructed from $X$ and its $E_d$ structure, at least if $Y$ is $(n-1)$-connected.  There is a functorial procedure, May's \emph{two-sided bar construction}, which to an $E_n$-space $X$ associates a pointed space $B(S^n,\mathcal{D}_n,X)$.  For $X = \Omega^n Y$ there is a canonical weak equivalence between $B(S^n,\mathcal{D}_n,X)$ and the $(n-1)$-connected cover of $Y$.  Conversely, if $X$ is any group-like $E_n$ space and we define $Y = B(S^n,\mathcal{D}_n,X)$, then there is a zig-zag of weak equivalences which are $E_n$-maps, between $X$ and $\Omega^n Y$.
\end{example}

\subsection{Structure on embedded cobordism categories}
\label{sec:structure-on-embedded}

We can ``almost'' give the space $B\mathcal{C}_d^V$ an $E_n$ structure with $n = \dim(V)$, as follows.  Any embedding $j: \{1, \dots, k\} \times V \to V$ gives a continuous functor
\begin{equation}\label{eq:8}
  j_*: (\mathcal{C}_d^V)^k \to \mathcal{C}_d^V,
\end{equation}
by taking images of objects $M_1 \subset V$ and morphisms $W \subset [0,t] \times V$ under $j$, and hence a continuous map
\begin{equation*}
  B (j_*): (B\mathcal{C}_d^V)^k \to B \mathcal{C}_d^V.
\end{equation*}
Choose a diffeomorphism $\phi: V \approx \mathrm{int}(D^n)$ and use it to define maps
\begin{align*}
  \mathcal{D}_n(k) \hookrightarrow \Emb(\{1, \dots, k\} \times \mathrm{int}(D^n),\mathrm{int}(D^n)) &\xrightarrow{\phi} \Emb(\{1, \dots, k\} \times V,V)\\
  & \to \Map((B\mathcal{C}_d^V)^k,(B\mathcal{C}_d^V),
\end{align*}
where the last map is $j \mapsto B(j_*)$.  It is easy to verify that these maps preserve the operad structure: symmetric group actions, identity, and compositions $\circ_i$.  Unfortunately they are not continuous, because the functor~(\ref{eq:8}) does not ``depend continuously'' on $j$.  The problem is that $\mathcal{C}_d^V$ has a set (as opposed to a space) of objects, and hence $N_0\mathcal{C}_d^V$ has the discrete topology in which the action is not continuous.

The obvious fix, which is the approach we shall take, is to topologize $\mathrm{Ob}(\mathcal{C}_d^V)$.  In fact this set has a very natural topology, which does make the actions explained above be continuous.  The result is no longer a topologically enriched category, though.

\subsection{Topological categories}
\label{sec:topological-categories}

A \emph{topological category}, or a \emph{category internal to topological spaces} is a category $\overline{C}$ together with a topology on $\mathrm{Ob}(\overline{C})$ and on the total set $\mathrm{Mor}(\overline{C})$ of all morphisms, such that the four structure maps
\begin{align*}
  \text{identity}: \mathrm{Ob}(\overline{C}) & \to \mathrm{Mor}(\overline{C})\\
  \text{source, target}: \mathrm{Mor}(\overline{C}) & \to \mathrm{Ob}(\overline{C})\\
  \text{composition}: \mathrm{Mor}(\overline{C}) \times_{\Ob{\overline{C}}} \mathrm{Mor}(\overline{C}) & \to \mathrm{Mor}(\overline{C})
\end{align*}
are all continuous.  The nerve $N\overline{C}$ is then a simplicial topological space, with $N_0 \overline{C} = \mathrm{Ob}(\overline{C})$, $N_1 \overline{C} = \mathrm{Mor}(\overline{C})$, etc.  There is also a notion of non-unital topological categories, which have no identities; the nerve is then a semi-simplicial topological space (no degeneracy maps).

There is an easy way to extract a topologically enriched category $C$ out of a topological category $\overline{C}$, namely take the underlying set of $\mathrm{Ob}(\overline{C})$, and let $C(x,y) = \{f \in  \mathrm{Mor}(\overline{C}) \mid \text{source}(f) = x, \text{target}(f) = y\}$ with the subspace topology.  Composition and identity are as in $\overline{C}$.  There is an induced map of simplicial topological spaces $NC \to N\overline{C}$.  If we assume that $(d_0,d_1): N_1\overline{C} \to (N_0 \overline{C})^2$ is a Serre fibration, then it follows that the diagrams
\begin{equation}\label{eq:16}
  \begin{aligned}
  \xymatrix{
    N_p C \ar[r] \ar[d] & N_p \overline{C}\ar[d]\\
    (N_0 C)^{p+1} \ar[r] & (N_0 \overline{C})^{p+1}
  }
\end{aligned}
\end{equation}
are homotopy pullbacks for all $p$.  In this case
not much is lost by this operation, from a homotopy theoretic point of view\footnote{In the unital case a concise statement is the map $\Sing(NC) \to \Sing(N\overline{C})$ is a weak equivalence in the \emph{complete Segal space} model structure \cite{Rezk} on simplicial spaces.  Hence they may be viewed as two models for the same homotopy type of $(\infty,1)$-categories.}.  For example it follows from the Bousfield--Friedlander theorem that the induced map of classifying spaces $BC \to B\overline{C}$ is a weak equivalence.  It is also a bijection, so $\pi_1(BC) \to \pi_1(B\overline{C})$ is an isomorphism of groupoids.  If $hC$ is unital (but $C$ possibly not), then the canonical functors
\begin{equation}\label{eq:17}
  hC \xrightarrow{\gamma} \pi_1 BC \xrightarrow{\cong} \pi_1(B\overline{C})
\end{equation}
induces an isomorphism $(hC)[(hC)^{-1}] \cong \pi_1(B\overline{C})$.

We may define a version of the embedded cobordism category which is a topological category, as follows.  The set of objects of $\mathcal{C}_d^V$ is in bijection with
\begin{equation*}
  \coprod_M \Emb(M,V)/\Diff(M),
\end{equation*}
where the disjoint union ranges over closed $(d-1)$-manifolds, one in each diffeomorphism class.  Hence we may topologize it in the quotient topology from the $C^\infty$ topology on the embedding space.  Keeping the same underlying category as $\mathcal{C}_d^V$, there is a compatible way to topologize the total set of morphisms, where each connected component is topologized as a subspace of $\R_{>0} \times \Emb_\partial(W,[0,1] \times V)/\Diff_\partial(W)$.  See \cite{GMTW} for more details on such a definition.  Let us temporarily write $\mathscr{C}_d^V$ for the version as a topological category.

\subsection{Conclusion}
\label{sec:conclusion}

Since $\mathcal{C}_d^V$ and $\mathscr{C}_d^V$ have the same underlying (non-topologized) category, the discussion in \S\ref{sec:structure-on-embedded} applies equally well to the topological category $\mathscr{C}_d^V$, but now the functor
\begin{equation*}
  j_*: (\mathscr{C}_d^V)^k \to \mathscr{C}_d^V
\end{equation*}
depends continuously on $j \in \Emb(\{1, \dots, k\} \times V,V)$ and we get continuous maps
\begin{align*}
  \mathcal{D}_n(k) \hookrightarrow \Emb(\{1, \dots, k\} \times \mathrm{int}(D^n),\mathrm{int}(D^n)) &\xrightarrow{\phi} \Emb(\{1, \dots, k\} \times V,V)\\
  & \to \Map((B\mathscr{C}_d^V)^k,(B\mathscr{C}_d^V)),
\end{align*}
which satisfy the axioms of a map of operads.  Hence they make the space $B\mathscr{C}_d^V$ into an $E_n$-algebra (depending on the diffeomorphism $V \approx \mathrm{int}(D^n)$).  For $n = \dim(V) \geq 3$ the fundamental groupoid $\pi_1(B\mathscr{C}_d^V)$ therefore inherits a symmetric monoidal structure, from a choice of $m \in \mathcal{D}_n(2)$.  This choice also promotes $h\mathcal{C}_d^V$ to a symmetric monoidal category and the universal functor (as in~(\ref{eq:17}))
\begin{equation*}
  h\mathcal{C}_d^V \xrightarrow{\gamma} \pi_1(B\mathscr{C}_d^V)
\end{equation*}
to a symmetric monoidal functor, factoring over a symmetric monoidal equivalence $h\mathcal{C}_d^V[(h\mathcal{C}_d^V)^{-1}] \simeq \pi_1(B\mathscr{C}_d^V)$.

By Example~\ref{ex:loop-space}, the space $\Omega^V T_{d,\R \oplus V} \approx \Omega^n T_{d,\R \oplus V}$ also canonically has the structure of an $E_n$ algebra.  Hence it makes sense to ask whether the weak equivalence from the main theorem, between $\Omega^V T_{d,\R \oplus V}$ and $B \mathcal{C}_d^V \simeq B \mathscr{C}_d^V$ preserves this structure.  The proof of this equivalence goes through a rather long zig-zag of weak equivalences.  The original proofs of the main theorem did not discuss compatibility with $E_n$-structures, but more recently C.\ Schommer-Pries \cite[Chapter 4]{1712.08029} has shown how to find a zig-zag of maps, in our notation between  $B\mathscr{C}_d^V$ and $\Omega^V T_{d,\R \oplus V}$, which are both maps of $E_n$ spaces and weak equivalences of underlying spaces.  (Nguyen \cite{1505.03490} had shown this earlier in the limiting case $n \to \infty$, using Segal's $\Gamma$-spaces as a model for $E_\infty$ spaces.)  More concisely, he constructed a weak equivalence of $E_n$ spaces $B\mathscr{C}_d^V \simeq \Omega^V T_{d,\R \oplus V}$.

Combining everything, we get symmetric monoidal functors (for $\dim(V) \gg d$)
\begin{equation*}
  \Cob_d \xleftarrow{\simeq} h\mathcal{C}_d^V \to h\mathcal{C}_d^V [(h\mathcal{C}_d^V)^{-1}] \xrightarrow{\simeq} \pi_1(B\mathscr{C}_d^V) \xleftrightarrow{\simeq} \pi_1(\Omega^n T_{d,\R \oplus V}),
\end{equation*}
where the symmetric monoidal structure on $\pi_1(\Omega^n T_{d,\R \oplus V})$ arises from the $E_n$-structure on $\Omega^n T_{d,\R \oplus V}$, as explained in \S\ref{ex:loop-space}.

\begin{remark}\label{remark:postnikov-for-spaces}
  Let $X$ be a (based) space and let $n \geq 3$.  Let $X \to \tau_{\leq n} X$ denote the Postnikov truncation\footnote{See Exercise~\ref{Postnikov}, especially (d).}, and $\tau_{> n} X = \tau_{\geq n+1} X \to X$ its homotopy fiber over the base point.  Then the two maps $\tau_{\leq n+1}\tau_{\geq n} X \to \tau_{\leq n+1}X \leftarrow X$ induce symmetric monoidal equivalences
  \begin{equation*}
    \pi_1 \Omega^n(\tau_{\leq n+1}\tau_{\geq n} X) \xrightarrow{\simeq} \pi_1 \Omega^n (\tau_{\leq n+1} X ) \xleftarrow{\simeq} \pi_1 \Omega^n X,
  \end{equation*}
  so the symmetric monoidal groupoid $\pi_1(\Omega^n X)$ depends only on the homotopy type of the based space $\tau_{\leq n+1}\tau_{\geq n} X$.  This space is a ``two-stage Postnikov tower'', meaning that it has only two non-zero homotopy groups, namely $A = \pi_n(X,\ast)$ and $B = \pi_{n+1}(X,\ast)$.  Moreover, it is canonically homotopy equivalent to the homotopy fiber of a map of Eilenberg--MacLane spaces
  \begin{equation*}
    K(A,n) \xrightarrow{k} K(B,n+2),
  \end{equation*}
  whose homotopy class in $[K(A,n),K(B,n+2)] \cong H^{n+2}(K(A,n);B)$ is one of the \emph{$k$-invariants} of $X$.  It is a measure of how far $\tau_{\leq n+1}\tau_{\geq n} X$ is from being homotopy equivalent to the product $K(A,n) \times K(B,n+1)$.

  To summarize, the universal symmetric monoidal groupoid equipped with a symmetric monoidal functor from $\Cob_d$ is entirely encoded by the two groups $A = \pi_n(T_{d,\R^{n+1}},\ast)$ and $B = \pi_{n+1}(T_{d,\R^{n+1}},\ast)$ as well as the $k$-invariant\footnote{See also Exercise~\ref{exerc:k-invariant}.} connecting them.  A similar discussion applies to all the other flavors of cobordism categories (e.g.\ with extra tangential structure).
\end{remark}

With only a little more work, the classification of symmetric monoidal functors $\Cob_d \to D$ with values in a rigid symmetric monoidal groupoid $D$ can be turned entirely into homotopy theory of spaces.  The classical ``infinite loop space machines'' allow us to write the classifying space $BD$ as an iterated loop space
\begin{equation*}
  BD \simeq \Omega^n (B^{n+1} D),
\end{equation*}
for any $n$, and for $n \geq 3$ such a delooping in turn determines the symmetric monoidal structure on $D \simeq \pi_1(BD)$, as explained in
\S\ref{sec:little-disks}.  Using the main theorem with $V = \R^n$ for $n \gg d$ there is then an equivalence of groupoids
\begin{equation*}
  \mathrm{Fun}^\otimes(\Cob_d,D) \simeq \pi_1\Map_*(\tau_{\geq n} T_{d,\R^{n+1}},B^{n+1} D),
\end{equation*}
where the domain denotes the category of symmetric monoidal functors and monoidal natural transformations between them and the codomain is the fundamental groupoid of the space of pointed maps.

\section*{Lecture 4: Cobordism classes and characteristic classes}\refstepcounter{section}
\label{sec:lecture4}
\addcontentsline{toc}{section}{Lecture 4: Cobordism classes and characteristic classes}

As explained in Lecture 2, each path component of a morphism space in $\mathcal{C}_d$ may be interpreted as a classifying space for bundles of manifolds.  In this section we shall explain how the main result stated in Lecture 2 may be used to study the cohomology of these spaces, i.e.\ characteristic classes of bundles of manifolds.  This will make use of more of the homotopy type of $B\mathcal{C}_d$, beyond its fundamental groupoid.

\subsection{Stable homology of moduli spaces of surfaces}

Let us discuss the case $d=2$.  In this case, classification of surfaces asserts that a compact connected non-orientable smooth 2-manifold $W$ with $\partial W \approx \amalg_1^k S^1$ must be diffeomorphic to
\begin{equation*}
  W_{h,k}= (h \R P^2) \# (k D^2),
\end{equation*}
the connected sum of $h \in \Z_{\geq 1}$ copies of the real projective plane and $k \in \Z_{\geq 0}$ copies of the 2-disk.  For $h' \geq h$, $k \geq 1$, and $k' \geq 0$, there exist embeddings
\begin{equation*}
  W_{h,k} \stackrel{j}\hookrightarrow W_{h',k'}
\end{equation*}
and any such embedding gives rise to a continuous group homomorphism
\begin{equation*}
  j_*: \Diff_\partial(W_{h,k}) \to \Diff_\partial(W_{h',k'}),
\end{equation*}
defined by ``extension by the identity'': for $\varphi \in \Diff_\partial(W_{h,k})$ the diffeomorphism $j_*\varphi$ is given by $j \circ \varphi \circ j^{-1}$ on the image of $j$, and by the identity on the complement of the image of $j$.  The resulting map $j_* \varphi$ is again a diffeomorphism, because $\varphi$ acts as the identity near $\partial W_{h,k}$.  Taking induced map of classifying spaces gives a map of spaces $B\Diff(W_{h,k}) \to B\Diff(W_{h',k'})$ and hence a map on homology
\begin{equation}\label{eq:11}
  H_i(B\Diff(W_{h,k})) \to H_i(B\Diff(W_{h',k'})).
\end{equation}
The following \emph{homological stability} theorem was proved by Nathalie Wahl\footnote{Wahl's paper works with the discrete groups $\pi_0(\Diff_\partial(W_{h,k}))$, but that point of view is equivalent by the earlier result \cite[Th\'eor\`eme 1]{Gramain}.}.
\begin{theorem}[\cite{Wahl}]
  For any choice of embedding $j: W_{h,k} \hookrightarrow W_{h',k'}$ the homomorphism~(\ref{eq:11}) is an isomorphism for $h \gg i$.
\end{theorem}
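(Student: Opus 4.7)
The plan is to follow the classical Quillen-style strategy for homological stability, adapted to non-orientable surfaces. The first step is to reduce from diffeomorphism groups to discrete mapping class groups $\Gamma_{h,k} = \pi_0 \Diff_\partial(W_{h,k})$: by the cited theorem of Gramain, each component of $\Diff_\partial(W_{h,k})$ is contractible since $\partial W_{h,k} \neq \emptyset$, and so $B\Diff_\partial(W_{h,k}) \simeq B\Gamma_{h,k}$. It then suffices to prove isomorphism of group homology $H_i(\Gamma_{h,k}) \to H_i(\Gamma_{h',k'})$ for $h \gg i$. A second reduction factors any embedding $j$ into elementary pieces: a sequence of ``glue on a holed $\RP^2$'' operations (increasing $h$ by one), together with pair-of-pants and disk-capping operations that modify only $k$. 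Independence of $j$ then follows from the fact that these elementary moves commute up to isotopy, so it is enough to prove that each elementary inclusion induces an isomorphism on $H_i$ for $h \gg i$.

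To handle the essential case $\Gamma_{h,k} \to \Gamma_{h+1,k}$, I would introduce a simplicial complex $\mathcal{X}(W_{h,k})$ on which $\Gamma_{h,k}$ acts: the $p$-simplices are collections of $p+1$ isotopy classes of disjoint ``admissible'' arcs in $W_{h,k}$ (with endpoints on a fixed boundary component), where admissible means, in the non-orientable setting, that the arc is one-sided and cutting along it yields a surface of the form $W_{h-1,k+1}$ (so that cutting brings us to a smaller mapping class group of the family). The key properties to establish about this complex are: (i) the stabilizer of a $p$-simplex is, up to a finite-index subgroup or small-kernel extension, isomorphic to $\Gamma_{h-p-1,k'}$ for an explicit $k'$; and (ii) the complex is highly connected in terms of $h$.

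Given (i) and (ii), the argument concludes via the equivariant homology spectral sequence associated with the action of $\Gamma_{h,k}$ on $\mathcal{X}(W_{h,k})$, which converges to $H_*(\Gamma_{h,k})$ in a range of degrees determined by the connectivity in (ii) and whose $E^1$-page is built from homologies of stabilizers as in (i). Running the same spectral sequence for $\Gamma_{h+1,k}$ and comparing via the stabilization map, the inductive hypothesis (applied to the smaller groups $\Gamma_{h-p-1,k'}$) makes the comparison map of $E^1$-pages an isomorphism in a range, and a standard spectral sequence chase promotes this to an isomorphism on the abutments in the same range. The induction is on $h$, with small values handled by inspection (or vacuously).

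The main obstacle is (ii), the high-connectivity statement: one must prove that $\mathcal{X}(W_{h,k})$ is roughly $\lfloor (h-c)/c' \rfloor$-connected for explicit constants. The standard technique, going back to Harer and refined by Ivanov, Hatcher--Vogtmann, and Wahl, is a surgery-and-induction argument: represent an element of $\pi_n(\mathcal{X})$ by a simplicial map from a triangulated $S^n$, put it in general position with respect to a chosen ``basepoint arc,'' and then successively surger intersections to reduce to a configuration that extends over a disk. The non-orientability is what makes this delicate, because one must distinguish one-sided and two-sided arcs and control how cutting changes the topological type; in particular, matching the connectivity range to the stability slope in the spectral sequence requires careful combinatorial bookkeeping. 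Once the connectivity of $\mathcal{X}(W_{h,k})$ is established with a linear bound, the spectral sequence step is essentially formal.
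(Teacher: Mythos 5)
You should first be aware that the paper does not prove this statement at all: it is quoted, with attribution, from Wahl's paper \cite{Wahl} (with Gramain's theorem invoked in a footnote to pass between $\Diff_\partial$ and its group of path components, and with Randal-Williams' later improvement of the range noted). So there is no internal proof to compare your argument against; the only meaningful comparison is with Wahl's published proof.

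Measured against that, your outline reproduces the broad Harer--Ivanov--Wahl strategy correctly (reduction to mapping class groups via Gramain, action on a highly connected complex whose simplex stabilizers are smaller groups in the same family, relative spectral sequence and induction), but as written it is a plan rather than a proof: your items (i) and (ii) are exactly the substantive content of Wahl's paper and are left unestablished, and you acknowledge that the connectivity argument in the non-orientable case is the delicate point. Two concrete inaccuracies are worth flagging. First, the bookkeeping in your definition of ``admissible'' arcs is off: cutting $W_{h,k}$ along an arc raises the Euler characteristic by one, so an arc whose cut surface is $W_{h-1,k+1}$ cannot exist ($\chi(W_{h-1,k+1})=\chi(W_{h,k})$); the one-sided-type arcs you want cut $W_{h,k}$ down to $W_{h-1,k}$. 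Second, the complex Wahl actually uses for the crosscap stabilization is not a complex of arcs alone but (in essence) of one-sided simple closed curves tethered to the boundary by arcs --- equivalently, embedded M\"obius bands with an arc to the basepoint --- precisely so that the stabilizer of a vertex is the mapping class group of a surface with one fewer crosscap; proving the high connectivity of that complex, and matching its slope to the spectral-sequence argument, is where the real work and the novelty over the orientable case lie. So your proposal is a reasonable road map toward Wahl's theorem, but it neither fills in nor replaces the proof, and the paper itself offers none to lean on.
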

In fact Wahl proved that $h \geq 4i + 5$ is sufficient, which was later improved  by Randal-Williams to $h \geq 3i + 3$ being sufficient.

For any connected, compact, and non-orientable surface $W$, the homology of $B\Diff_\partial(W)$ is therefore ``independent of $W$'' at least up to homological degree $\frac{h-3}{3}$, where $h$ is the largest natural number for which there exists an embedding $j: W_{h,1} \hookrightarrow W$.  (This number may be characterized homologically as the dimension of the cokernel of the natural map $H_1(\partial W;\F_2) \to H_1(W;\F_2)$.)
By the homological stability theorem the group $H_i(B\Diff_\partial(W);A)$ may be identified with the colimit of the direct system
\begin{equation*}
  \dots \to H_i(B\Diff_\partial(W_{h,1});A) \to H_i(B\Diff_\partial(W_{h+1,1});A) \to \dots,
\end{equation*}
which is manifestly independent\footnote{The diligent reader and writer ought to raise the question of whether the isomorphism is independent of choice of $j: W_{h,1} \hookrightarrow W$.  This is indeed the case: assuming the image of $j$ is contained in $W \setminus \partial W$, as may be arranged after isotoping $j$, it may be deduced from the classification of surfaces that the homomorphisms of diffeomorphism groups arising from two choices of $j$ are conjugate, and hence the maps of classifying spaces are homotopic.  By a similar argument the maps in the direct system are independent of choice of $W_{h,1} \hookrightarrow W_{h+1,1}$.} of $W$.

The range appearing in Wahl's theorem (improved by Randal-Williams) is known as the \emph{stable range}.  In this range the homology with coefficients in an abelian group $A$ may be identified with the \emph{stable homology} groups
\begin{equation}\label{eq:14}
  \colim_{h \to \infty} H_*(B\Diff_\partial(W_{h,1});A).
\end{equation}
In this lecture we shall discuss how cobordism categories may be used to determine the stable homology.  We shall not directly employ the ``field theory'' language, although the interested reader may interpret some of the constructions in this lecture in terms of invertible field theories.

\begin{remark}
  A similar homological stability result for \emph{oriented} compact connected 2-manifolds was established earlier by Harer in \cite{H} (see the survey \cite{MR3135444} for a geodesic proof).  Surfaces with other tangential structures were considered later by Randal-Williams in \cite{R-WResolution}.  Similar homological stability results are known for manifolds of dimension higher than 2, as long as the dimension is even, see \cite{GR-W3,GR-W4}.
  
  Several homological stability results for moduli spaces of odd dimensional manifolds have been established by Perlmutter \cite{PerlmutterProdSpheres,PerlmutterLink,PerlmutterStabHand}.
\end{remark}


\subsection{Cohomology of morphism spaces}
\label{sec:morphism-spaces}

The connection between cobordism categories and the question of determining $H^*(B\Diff_\partial(W);A)$ for a (non-orientable, connected, compact) surface $W$ is that the space $B\Diff_\partial(W)$ appears, up to homotopy, as a path component of a morphism space in $\mathcal{C}_2$.  Indeed, if we pick an embedding $\partial W \hookrightarrow V \subset \R^\infty$ and denote its image by $M$, then $M$ is an object of $\mathcal{C}_2$, and one of the path components of the space $\mathcal{C}_2(M,\emptyset)$ is weakly equivalent to $B\Diff_\partial(W)$, as explained in Section~\ref{sec:bundles}.  It also appears as a path component of $\mathcal{C}_2(M_0,M_1)$ if $M_0 \amalg M_1 \approx \partial W$.  We wish to deduce information about the cohomology of $B\Diff_\partial(W)$ from this appearance as a path component of morphism spaces in $\mathcal{C}_2$, and the knowledge provided by the main theorem cited above, which gives the weak equivalence
\begin{equation*}
  \Omega B\mathcal{C}_2 \simeq \Omega^\infty MTO(2) = \colim_V \Omega^V T_{2,V}.
\end{equation*}
This homotopy equivalence should be regarded as a calculation of the left hand side and therefore, at least implicitly, gives information about $\mathcal{C}_2$.  The right hand side is a priori more well understood---for example, standard arguments imply that each path component of this space has rational cohomology ring
\begin{equation}\label{eq:15}
  \Q[x_1, x_2, \dots],
\end{equation}
a polynomial ring on generators $x_i$ of cohomological degree $|x_i| = 4i$.

Let us first discuss what we are trying to do, in the abstract setting of a topologically enriched category $C$.
There is a continuous map
\begin{equation}\label{eq:12}
  C(x,y) \to \Omega_{x,y} BC,
\end{equation}
where the codomain denotes the space of paths $[0,1] \to BC$ starting at $x \in N_0 C \subset BC$ and ending at $y$, in the compact-open topology.  Standard arguments, generalizing the discussion in \S\ref{sec:categories-and-groupoids}, show that this map is a weak equivalence for all $x$ and $y$, if and only if $hC$ is a groupoid.  Without further assumptions the map~(\ref{eq:12}) may be very far from an equivalence; for example if $C$ is a partially ordered set the domain is either a singleton or empty, while the space $BC$ may have any homotopy type whatsoever.

The map~(\ref{eq:12}) is not quite a natural transformation in the strict sense: a morphism $f: y \to y'$ induces a map $C(x,y) \to C(x,y')$ by postcomposing with $f$, and a map $\Omega_{x,y} BC \to \Omega_{x,y'} BC$ by concatenating with the path given by the 1-simplex $f \in N_1(C)$, but the square
\begin{equation*}
  \xymatrix{
    C(x,y) \ar[r]\ar[d]^{f \circ -} & \Omega_{x,y} BC\ar[d]\\
    C(x,y') \ar[r] & \Omega_{x,y'} BC
  }
\end{equation*}
does not commute on the nose.  It does commute up to homotopy though, which is sufficient to make
\begin{equation*}
  H_*(C(x,y);A) \to H_*(\Omega_{x,y} BC;A)
\end{equation*}
a natural transformation of functors of $(x,y) \in (hC)^\mathrm{op} \times (hC)$, for any abelian group $A$.  Since concatenation with a path is a homotopy equivalence, the codomain will send any morphism $y \to y'$ to an isomorphism $H_*(\Omega_{x,y} BC;A) \to H_*(\Omega_{x,y'} BC;A)$, and similarly for $x' \to x$.

If we have a filtered category $J$, an object $j_0 \in J$, and a functor $y: J \to C$, we therefore get an isomorphism
\begin{equation*}
  H_*(\Omega_{x,y(j_0)} BC;A) \xrightarrow {\cong} \colim_{j \in J} H_*(\Omega_{x,y(j)} BC;A)
\end{equation*}
and an induced map
\begin{equation}\label{eq:18}
  \colim_{j \in J} H_*(C(x,y(j));A) \to \colim_{j \in J} H_*(\Omega_{x,y(j)} BC;A) \cong H_*(\Omega_{x,y(j_0)} BC;A)
\end{equation}
The following result is a version\footnote{An arguably more conceptual point of view on the map~(\ref{eq:12}) involves the \emph{derived} version of the universal functor $\gamma: C \to C[C^{-1}]$ mentioned earlier for plain categories.  Explicit models in simplicially enriched categories were given by Dwyer and Kan, \cite{MR579087,MR578563}, who also established that morphism spaces in the derived localization are path spaces in the classifying space.  Another model is the localization of quasi-categories \cite[\S5.2.7]{MR2522659}.  Either of these settings promotes~(\ref{eq:12}) to a natural transformation of (representable) functors of $y$.  From this derived point of view, it may also be more natural to replace the ``colimits of homology'' by ``homotopy colimits of chains'' in Proposition~\ref{prop:group-completion}; in such a formulation the assumption that $J$ be filtered may be weakened to $BJ$ being contractible.} of the ``group completion'' theorem for categories.
\begin{proposition}\label{prop:group-completion}
  Let $C$ and $y: J \to C$ be as above, let $A$ be an abelian group, and assume the functor
  \begin{equation}\label{eq:19}
    \begin{aligned}
    hC^\mathrm{op} &\to \mathrm{Ab}\\
    x &\mapsto \colim_{j \in J} H_*(C(x,y(j));A)
  \end{aligned}
  \end{equation}
  sends all morphisms to isomorphisms.  Then the map~(\ref{eq:18}) is an isomorphism, in fact a natural isomorphism of functors of $x \in hC^\mathrm{op}$.
\end{proposition}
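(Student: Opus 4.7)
The plan is to adapt the McDuff--Segal group completion theorem (equivalently, a homological version of Quillen's Theorem~B) to this enriched categorical setting. The strategy is to build a bar-construction model for the based path space of $BC$ out of the morphism spaces of $C$, take the filtered colimit over $J$, and use the hypothesis on the functor~\eqref{eq:19} to read off its $A$-homology as the left-hand side of~\eqref{eq:18}.

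First I would introduce, for $x, y \in \Ob(C)$, the semi-simplicial topological space $B_\bullet(x, C, y)$ whose $p$-simplices are composable chains of morphisms in $C$ of the form $x \to c_0 \to c_1 \to \cdots \to c_{p-1} \to y$, ranging over intermediate objects $c_0, \dots, c_{p-1} \in \Ob(C)$, with face maps induced by composition in $C$. A direct analogue of the identification $|N_\bullet C| = BC$ shows that its thick realization $\|B_\bullet(x, C, y)\|$ is a model for the space of paths in $BC$ from $x$ to $y$, hence weakly equivalent to $\Omega_{x, y(j_0)} BC$ after concatenation with any path from $y$ to $y(j_0)$. The inclusion of $0$-simplices $C(x, y) \hookrightarrow \|B_\bullet(x, C, y)\|$ recovers the map~\eqref{eq:12}, and the whole construction is strictly functorial in $x \in C^{\mathrm{op}}$ via precomposition in the outermost factor $C(x, c_0)$, giving the naturality claim.

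Next I would replace $y$ by the filtered system $y(j)$ and take the colimit over $j \in J$. Because $J$ is filtered, this colimit of path spaces is weakly equivalent (as a telescope) to $\Omega_{x, y(j_0)} BC$, and because $H_*(-;A)$ commutes with both filtered colimits and thick geometric realization of semi-simplicial spaces, the skeletal filtration yields a first-quadrant spectral sequence
\[
  E^1_{p, q} \;=\; \colim_{j \in J} H_q\bigl(B_p(x, C, y(j)); A\bigr) \;\Longrightarrow\; H_{p+q}\bigl(\Omega_{x, y(j_0)} BC; A\bigr).
\]
The $d^1$ differential is the alternating sum of the face maps, each induced by composition in $C$. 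I would then argue that this spectral sequence collapses onto the $p = 0$ column by an extra-degeneracy style argument: the system of morphisms $y(j) \to y(j')$ for $j \leq j'$ in $J$ acts by postcomposition at the $y$-end and, together with precomposition by morphisms in $C$, provides candidate sections for the face maps at the two ends; the hypothesis on~\eqref{eq:19} is exactly what is needed to promote these to a chain contraction of $E^1_{p \geq 1, \bullet}$ after $\colim_j H_*(-;A)$. What survives is $E^2_{0, \bullet} = \colim_j H_\bullet(C(x, y(j)); A)$, and the edge homomorphism to $H_\bullet(\Omega_{x, y(j_0)} BC; A)$ is precisely the map~\eqref{eq:18}.

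The principal technical obstacle is the first step: establishing rigorously that $\|B_\bullet(x, C, y)\| \simeq P_{x, y}(BC)$ for a topologically enriched and possibly non-unital category with compactly generated morphism spaces, and verifying compatibility with~\eqref{eq:12}. The cleanest route is probably to pass to a simplicially enriched or quasi-category model (for instance by applying $\Sing$ to morphism spaces as in Example~\ref{ex:Sing-real} and invoking the Dwyer--Kan style localization results mentioned in the footnote), where mapping spaces in the derived localization are computed by precisely such bar constructions and the compatibility comes for free. Once that is in place the extra-degeneracy / chain contraction argument in the final step is a routine variant of the classical McDuff--Segal proof.
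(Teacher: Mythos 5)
Your first step is the fatal one. The semi-simplicial space you call $B_\bullet(x,C,y)$, with $p$-simplices $\coprod C(x,c_0)\times C(c_0,c_1)\times\cdots\times C(c_{p-1},y)$, does \emph{not} model the path space $\Omega_{x,y}BC$: because the coefficient at the $y$-end is the representable module $C(-,y)$, the augmented object carries an extra degeneracy (insert the identity, or an approximate identity, of $y$ at the last slot), so its thick realization is weakly equivalent to $C(x,y)$ itself. The one-object sanity check already shows this: for the discrete monoid $\bN$ one gets $\lVert B_\bullet(\ast,\bN,\ast)\rVert\simeq\bN$, whereas $\Omega B\bN\simeq\bZ$. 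Your claimed identification $\lVert B_\bullet(x,C,y)\rVert\simeq\Omega_{x,y}BC$ is essentially equivalent to the assertion that the map~\eqref{eq:12} is a weak equivalence, which holds when $hC$ is a groupoid and fails badly in general (the poset example in the text); it is exactly the failure that the group-completion statement is designed to circumvent. The Dwyer--Kan results you appeal to do identify mapping spaces in the \emph{localization} with path spaces of $BC$, but those mapping spaces are built from hammocks (zig-zags), not from this one-directional bar construction, precisely for this reason.

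As a consequence the rest of the argument is circular rather than merely incomplete: the ``extra-degeneracy chain contraction'' you use to collapse the spectral sequence onto the $p=0$ column is the same contraction that collapses the entire bar construction onto $\colim_j H_*(C(x,y(j));A)$, so both the $E^\infty$-page and the claimed abutment are the left-hand side of~\eqref{eq:18}; the group $H_*(\Omega_{x,y(j_0)}BC;A)$ never genuinely enters, and the hypothesis on~\eqref{eq:19} is never really used. A correct proof has to confront the discrepancy between $C(x,y)$ and $\Omega_{x,y}BC$ head-on; the standard routes are the McDuff--Segal homology-fibration criterion (show that a suitable map over $N_\bullet C$, obtained by stabilizing along the system $y(j)$ and \emph{not} capping off with a representable at the $y$-end, is a levelwise homology fibration --- this is where the assumption that~\eqref{eq:19} inverts all morphisms does its work), or the derived-localization comparison indicated in the footnote to Proposition~\ref{prop:group-completion}. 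Note also that the paper itself does not prove this proposition but cites it as a version of the group-completion theorem, so your write-up cannot lean on it implicitly either.
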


This abstract result (with $A = \Z$) is quite close to what we need: the stable homology groups~(\ref{eq:14}) are (filtered)  colimits of homology groups of path components of morphism spaces in the cobordism category $\mathcal{C}_2$, and we understand the space $\Omega B\mathcal{C}_2 \simeq \Omega^\infty MTO(2)$ well, including the (rational co)homology of its path components~(\ref{eq:15}).  Unfortunately the proposition does not apply on the nose to $C = \mathcal{C}_2$, because the many path components of $\mathcal{C}_2(x,y)$ corresponding to disconnected cobordisms prevents the  assumption of the proposition from being satisfied.

\begin{remark}
  Even though we shall not need it in the rest of this lecture, let us make a brief remark about how Proposition~\ref{prop:group-completion} fits into the ``field theory'' paradigm, under the extra assumption that $C$ is symmetric monoidal, $y(j_0) = 1$ is the monoidal unit of $C$, and $A$ is a commutative ring.  Then the functor
  \begin{align*}
    hC^\mathrm{op} &\to \text{graded $A$-modules}\\
    x &\mapsto \colim_{j \in J} H_*(C(x,y(j);A) \cong H_*(\Omega_{x,1} BC;A)
  \end{align*}
  may be promoted to a lax symmetric monoidal functor, where the codomain is symmetric monoidal with respect to $\otimes_A$ and the usual symmetry $x \otimes y \mapsto (-1)^{|x||y|} y \otimes x$.  The monoidality of the functor arises from applying the lax monoidal functor $H_*(-;A)$ to
  \begin{equation*}
    \Omega_{x,1} (BC) \times \Omega_{x',1}(BC) \xrightarrow{\cong} \Omega_{(x,x'),(1,1)} (BC \times BC) \to \Omega_{x \otimes x',1 \otimes 1} BC \xrightarrow{\simeq} \Omega_{x \otimes x',1} BC,
  \end{equation*}
  where the middle arrow is $\Omega_{(x,x'),(1,1)} (B \otimes)$ and the last arrow is concatenating with the path corresponding to the isomorphism $1 \otimes 1 \to 1$.
  
  Since the resulting natural transformation 
  \begin{equation*}
H_*(\Omega_{x,1} BC;A) \otimes_A H_*(\Omega_{x',1} BC;A) \to H_*(\Omega_{x \otimes x',1} BC;A)
\end{equation*}
 and also the
 unit $A \to H_*(\Omega_1 BC;A)$ are likely not isomorphisms,  we  have only constructed a \emph{lax} monoidal functor.  But if $BC$ is path connected it lifts to a (strong) monoidal functor into the category of modules over $H_*(\Omega_1 BC;A)$, endowed with tensor product over that graded-commutative ring.  The module structure is induced by the map of spaces $\Omega_{x,1} BC \times \Omega_1 BC \to \Omega_{x,1} BC$ defined by concatenation of paths, and the monoidality is defined as before but factoring over the tensor product over $H_*(\Omega_1 BC;A)$.  It is not hard to check that this is well defined and gives a strong monoidal functor if $BC$ is path connected\footnote{A more elaborate construction also works for disconnected $BC$, at least when the commutative monoid structure on $\pi_0(BC)$ induced by $\otimes$ is a group.  Then the functor should take values in a symmetric monoidal category of $\pi_0(BC)$-graded $H_*(\Omega_1 BC;A)$-modules.  The symmetry isomorphism involves a ``generalized sign'' which is an anti-symmetric function $c: \pi_0(BC) \times \pi_0(BC) \to \pi_1(BC,1) \subset H_*(\Omega_1 BC;A)^\times$, and the associator involves a normalized 3-cocycle $h: (\pi_0(BC))^3 \to \pi_1(BC,1)$.}.
\end{remark}

\subsection{Cobordisms with connectivity restrictions}
\label{sec:restrictions}

It is a popular endeavour to study and/or define field theories whose domain is \emph{larger} than $\Cob_d$.  The general flavor is to allow objects to have boundary as well: instead of only allowing closed $(d-1)$-manifolds $M$ we could allow $M$ to be a cobordism between $(d-2)$-manifolds.  Cobordisms $M_0 \leadsto M_1$ would then be $d$-manifolds with corners.
If we stop there, the corresponding notion of field theory is a ``$(d-2,d-1,d)$-theory''.  Going for maximal generality, there is a notion of \emph{fully extended} field theory.  See \cite{1712.08029} for a classification result for invertible field theories in this setting or \cite{Lurie} for a non-invertible classification result.

In this section we shall do the \emph{opposite}: study functors defined on something \emph{smaller} than $\mathcal{C}_d$.  Such subcategories turn out to be very useful for understanding homology of components of the morphism spaces $\mathcal{C}_d(M_0,M_1)$ which, as we have seen, have the homotopy type of $B\Diff_\partial(W)$.  For $d=2$ the goal is, roughly speaking, to find a subcategory $C \subset \mathcal{C}_2$ which is large enough to contain all the $B\Diff_\partial(W_{h,k})$ and small enough that Proposition~\ref{prop:group-completion} applies to it.

\begin{definition}
  Let $d \geq 0$ and $k \geq -1$.  For objects $M_0, M_1 \in \Cob_d$, let
  \begin{equation*}
    \Cob_d^k(M_0,M_1) \subset \Cob_d(M_0,M_1)
  \end{equation*}
  be the subset consisting of (diffeomorphism classes of abstract) cobordisms $W$, such that the pair $(W,M_1)$ is $k$-connected.  For $k = -1$ this is no condition, for $k = 0$ it means that $\pi_0(M_1) \to \pi_0(W)$ is surjective, and for $k > 0$ it additionally means that the relative homotopy groups $\pi_i(W,M_1,x)$ vanish for all $i \leq n$ and all basepoints $x \in M_1$.
\end{definition}
As you will show in Exercise~\ref{exercise:cob-cat-with-connectivity}, the connectivity property is preserved under composition of cobordisms, so that we have defined a subcategory $\Cob_d^k \subset \Cob_d$.

Since we have functors
\begin{equation*}
  h\mathcal{C}_d^V \to h\mathcal{C}_d \xrightarrow{\simeq} \Cob_d,
\end{equation*}
we get corresponding subcategories of $\mathcal{C}_d$ and $\mathcal{C}_d^V$ by taking the inverse image.  Similarly in the presence of a tangential structure $\Theta$.  Following \cite{GR-W2} we shall write $\mathcal{C}_d^k \subset \mathcal{C}_d$ for the inverse image of $\Cob_d^k \subset \Cob_d$.
\begin{lemma}
  $\Cob_d^k \subset \Cob_d$ is a symmetric monoidal subcategory.  Relatedly, for $n = \dim(V)$, the $E_n$ structure on $\mathscr{C}_d^V$ restricts to an action on the topological subcategory defined as the inverse image of $\Cob_d^k$.
\end{lemma}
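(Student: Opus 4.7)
The plan is to verify the three defining properties of a symmetric monoidal subcategory---closure under composition and identities, and closure under the tensor product together with its coherence data---and then transport the result to the topologically enriched/operadic setting.

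First, $\Cob_d^k$ contains all identities: the cylinder $[0,1]\times M$, viewed as a cobordism $M\leadsto M$, has outgoing boundary $\{1\}\times M$, which is a deformation retract of $[0,1]\times M$. Hence $\pi_0(M)\to\pi_0([0,1]\times M)$ is a bijection and $\pi_i([0,1]\times M,\{1\}\times M,x)$ vanishes for all $i\ge 1$, so the pair is $k$-connected for every $k\ge -1$. Combined with the exercise cited above, asserting that composition preserves $k$-connectedness of the pair $(W,M_{\mathrm{out}})$, this makes $\Cob_d^k$ a subcategory of $\Cob_d$.

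Second, $\Cob_d^k$ is closed under disjoint union. If $W:M_0\leadsto M_1$ and $W':M_0'\leadsto M_1'$ have $k$-connected outgoing pairs, then $(W\amalg W',M_1\amalg M_1')$ splits as a disjoint union of pairs: surjectivity on $\pi_0$ holds componentwise, and a relative homotopy group based at $x\in M_1\amalg M_1'$ coincides with the corresponding relative homotopy group of $(W,M_1)$ or of $(W',M_1')$ according to the component of $x$. Hence $\amalg$ preserves $k$-connectedness. The monoidal unit $\emptyset$ lies in $\Cob_d^k$ trivially, and the associators, symmetries, and unitors of $(\Cob_d,\amalg)$ are all represented by cylinders on disjoint unions of $(d-1)$-manifolds, so they lie in $\Cob_d^k$ by the identity step applied componentwise.

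Third, for the topological version $\mathscr{C}_d^V$, the subcategory obtained as the preimage of $\Cob_d^k$ is well-posed because $k$-connectedness of $(W,M_{\mathrm{out}})$ is a diffeomorphism invariant of the cobordism class, and because the restriction is a union of path components of each morphism space---so it inherits a topology from $\mathscr{C}_d^V$. It remains to check that for each embedding $j\in\Emb(\{1,\ldots,r\}\times V,V)$, the induced functor $j_*:(\mathscr{C}_d^V)^r\to\mathscr{C}_d^V$ (after equalizing the time parameters of the inputs by precomposing with identity cylinders) produces a cobordism diffeomorphic, as a cobordism, to the disjoint union of its inputs. Since $k$-connectedness is a diffeomorphism invariant and is preserved under disjoint union by the previous paragraph, $j_*$ restricts to an endofunctor of the subcategory. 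The $E_n$-action on $B\mathscr{C}_d^V$ factors through these $j_*$, so it restricts continuously to an $E_n$-action on the classifying space of the inverse image subcategory.

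The main obstacle is the bookkeeping in the last step: one must check carefully that the convention used to equalize the time parameters in the $E_n$-action yields a cobordism that, as a cobordism relative to its collars, is diffeomorphic to the honest disjoint union $W_1\amalg\cdots\amalg W_r$---and in particular has outgoing boundary $M_1^{\mathrm{out}}\amalg\cdots\amalg M_r^{\mathrm{out}}$---so that the componentwise relative connectivity argument in the second step applies verbatim.
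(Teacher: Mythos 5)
Your proposal is correct and its core step is exactly the paper's proof: the paper simply observes that if $(W,M_1)$ and $(W',M_1')$ are $k$-connected then so is $M_1\amalg M_1'\hookrightarrow W\amalg W'$, deferring closure under composition to Exercise~\ref{exercise:cob-cat-with-connectivity} and leaving the $E_n$/coherence bookkeeping implicit. Your additional verifications (cylinders, coherence isomorphisms, diffeomorphism-invariance for the $j_*$ action) are just a more explicit spelling-out of what the paper leaves to the reader.
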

\begin{proof}
  Let $W \in \Cob_d(M_0,M_1)$ and $W' \in \Cob_d(M_0',M_1')$.  If $(W,M_1)$ and $(W',M'_1)$ are both $k$-connected, then the inclusion
  \begin{align*}
    M_1 \amalg M'_1 \hookrightarrow W \amalg W'
  \end{align*}
  is also $k$-connected.
\end{proof}
The case $k = 0$ is especially easy to visualize: the condition on a cobordism $W \in \mathcal{C}_d(M_0,M_1)$ is then that each path component of $W$ must have non-empty outgoing boundary.  (Symmetric monoidal) functors from $\mathcal{C}_d^k$, or from other subcategories of cobordism categories, are sometimes called \emph{restricted field theories}.  
\begin{example}
  For $d=1$ the subcategory $\Cob_1^0 \subset \Cob_1$ has a very simple structure.  Its objects are finite sets, while morphisms from a finite set $M_0$ to another finite set $M_1$ may be viewed as injections $M_0 \hookrightarrow M_1$ together with matchings (equivalence relations where each equivalence class has cardinality 2) on the complement of the image.  Symmetric monoidal functors $\Cob_1^0 \to \Mod_R$ are therefore uniquely given, up to natural isomorphism, by an object $X \in \Mod_R$ together with an element $\omega \in X \otimes_R X$ that is fixed under swapping the factors.  For most choices of $X$ and $\omega$ it is \emph{not} possible to extend to a symmetric monoidal functor on all of $\Cob_1$.  In fact, it can be shown\footnote{This is the 1-dimensional \emph{cobordism hypothesis}, \cite{BaezDolan,Lurie}.} that such an extension exists if and only if $X$ is finitely generated and projective, and the adjoint $\omega: X \to X^\vee$ is an isomorphism.  
\end{example}

Similar examples may be given of symmetric monoidal functors $\Cob_2^0 \to \Mod_R$ which do not extend to $\Cob_2$.


The inclusion $\mathcal{C}_d^k \hookrightarrow \mathcal{C}_d$ induces a map of spaces
\begin{equation}\label{eq:5}
  B\mathcal{C}_d^k \hookrightarrow B\mathcal{C}_d
\end{equation}
which, surprisingly, sometimes turns out to be a weak equivalence.
It is easy to see that~(\ref{eq:5}) can't always be a weak equivalence though.  For example if $k > d$, then we only allow cobordisms $W \in \Cob_d(M_0,M_1)$ for which the inclusion $M_1 \to W$ is a homotopy equivalence.  This prevents the map~(\ref{eq:5}) from inducing a bijection on $\pi_0$, much less being a weak equivalence.  It is perhaps more surprising that it is \emph{ever} a weak equivalence for $k \geq 0$.  The following result was proved in \cite[\S3]{GR-W2} (see also \cite[\S6]{GMTW} for the case $d=2$).
\begin{theorem}\label{thm:surgery-on-morphisms}
  Let $k \leq \frac d2 -1$.  Then the map~(\ref{eq:5}) is a weak equivalence.

  More generally let $\Theta$ be a space with $\mathrm{GL}_d(\R)$ action, let $\mathcal{C}_{\Theta}$ be the corresponding cobordism category of manifolds with $\Theta$-structure, as in Remark~\ref{rem:tangential}, and let $\mathcal{C}_\Theta^k$ be the inverse image of $\Cob_d^k$.  Then the inclusion induces a weak equivalence
  \begin{equation}\label{eq:5bis}
    B\mathcal{C}_\Theta^k \hookrightarrow B\mathcal{C}_\Theta
  \end{equation}
  for $k \leq \frac d2 -1$.
\end{theorem}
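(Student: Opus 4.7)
My plan is to prove this by a surgery-on-morphisms argument implemented parametrically over the nerve. The key geometric mechanism is that any cobordism $W \colon M_0 \leadsto M_1$ in $\mathcal{C}_\Theta$ can be made $k$-connected relative to $M_1$ by a finite sequence of surgeries on embedded disks $(D^{i+1}, \partial D^{i+1}) \hookrightarrow (W, M_1)$ with $i \leq k$, and such disks exist generically whenever $2(i+1) \leq d$, which is exactly the hypothesis $k \leq d/2 - 1$. I would implement this as a zig-zag of weak equivalences between $B\mathcal{C}_\Theta$ and $B\mathcal{C}_\Theta^k$ mediated by an auxiliary semi-simplicial space of surgery data.

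Concretely, for each $p$-simplex $\sigma = (W_1, \dots, W_p) \in N_p \mathcal{C}_\Theta$ I would introduce a semi-simplicial space $R_\bullet(\sigma)$ whose $q$-simplices are ordered tuples of $(q+1)$ pairwise disjoint framed surgery handles $\phi \colon D^{i+1} \times D^{d-i-1} \hookrightarrow \mathrm{int}(W_r)$ with attaching sphere in the outgoing boundary of $W_r$ and $i \leq k$, together with a chosen extension of the $\Theta$-structure across each surgery trace; the face maps forget a handle. Assembling these gives a bi-semi-simplicial space $Y_{\bullet, \bullet}$ with an augmentation $\epsilon \colon |Y_{\bullet,\bullet}| \to B\mathcal{C}_\Theta$ that forgets the surgery data. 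Simultaneous surgery along a point of $R_\bullet(\sigma)$ replaces $\sigma$ by a new composable chain in $N\mathcal{C}_\Theta$ which is closer to being $k$-connected, and iterating this defines a ``surgery flow'' on $Y_{\bullet,\bullet}$ whose eventual range is the sub-bi-semi-simplicial space $Y^k_{\bullet,\bullet}$ sitting over $N\mathcal{C}_\Theta^k$.

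The central geometric step is to show that the fibre $R_\bullet(\sigma)$ is contractible for every $\sigma$, in the range $k \leq d/2 - 1$. This splits into three inputs: existence of embedded disks $D^{i+1} \hookrightarrow W$ realizing a prescribed class of $\pi_i(W, \partial_{\mathrm{out}} W)$ for $i \leq k$, which holds by Whitney in the range $2(i+1) \leq d$; the possibility of making any finite collection of such disks pairwise disjoint and of connecting any two configurations by a path, again by general position; and existence-and-essential-uniqueness of a lift of the $\Theta$-structure across each surgery trace, whose obstructions vanish in the same range. With fibrewise contractibility in hand, the realization lemma for semi-simplicial spaces implies that $\epsilon$ is a weak equivalence, and likewise $|Y^k_{\bullet, \bullet}| \simeq B\mathcal{C}_\Theta^k$. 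The inclusion $|Y^k_{\bullet, \bullet}| \hookrightarrow |Y_{\bullet, \bullet}|$ is a weak equivalence because the surgery flow deformation-retracts the latter onto the former, and composing the three equivalences gives the theorem.

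The main obstacle, as I see it, is the combinatorial and parametric coherence rather than the point-set surgery. One must arrange that surgery data on each $W_r$ interact correctly with the face maps of $N\mathcal{C}_\Theta$ (which glue consecutive cobordisms along their boundaries) by keeping handles strictly inside collar-complements, and that the $\Theta$-structure extensions on individual traces splice together naturally under composition. A secondary subtle point is uniform finiteness of the surgery flow: over a compact family of $(\sigma, \rho)$ one needs a uniform bound on the number of surgery steps required to reach $\mathcal{C}_\Theta^k$. Both are where the real work lies; the bound $k \leq d/2 - 1$ is used twice, first for geometric existence of surgery data, and second for essential uniqueness of $\Theta$-extensions, and it is the latter that lets the surgery flow be defined coherently up to contractible choice.
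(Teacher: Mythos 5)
The paper does not actually prove Theorem~\ref{thm:surgery-on-morphisms}: it refers to \cite[\S 3]{GR-W2} (and \cite[\S 6]{GMTW} for $d=2$), so I am comparing your outline with those arguments, which do indeed proceed by resolving the classifying space by auxiliary (bi-)semi-simplicial spaces of surgery data and proving contractibility of the data spaces in the range $k\le \tfrac d2-1$. At that level your plan is the right one, but two of its steps are genuinely broken, and they are not the ``coherence'' issues you flag. The main one is structural. Your mechanism is supposed to carry a chain $(W_1,\dots,W_p)$ into $N\mathcal{C}_\Theta^k$ by surgeries performed inside the $W_r$ (handles in the interiors, kept away from the gluing collars), i.e.\ keeping the objects of the chain fixed; the ``surgery flow'' deformation-retracting $|Y_{\bullet,\bullet}|$ onto $|Y^k_{\bullet,\bullet}|$ requires exactly this. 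No such replacement exists in general: already for $k=0$ one has $\Cob_d^k(M_0,\emptyset)=\emptyset$ whenever $M_0\neq\emptyset$, since the condition demands that $\pi_0(\emptyset)\to\pi_0(W)$ be surjective, forcing $W=\emptyset$; yet $\Cob_d(S^{d-1},\emptyset)$ contains the disc. So a simplex containing such a morphism can never be moved into $N\mathcal{C}^k_\Theta$ without changing its objects, and in $B\mathcal{C}^k_\Theta$ the objects $S^{d-1}$ and $\emptyset$ are joined only by a zig-zag through other objects (this is precisely the point of Exercise~\ref{exercise:cob-cat-with-connectivity}(c)). If, on the other hand, you allow the surgeries to change objects (note your data is already self-contradictory: a handle with attaching sphere in the outgoing boundary is not contained in $\mathrm{int}(W_r)$, and handle subtraction along it surgers the outgoing object), then ``performing the surgery'' is no longer an operation on the single morphism $W_r$: it must simultaneously modify the intermediate object and the neighbouring morphism to preserve composability, the ``flow'' on a fixed bi-semi-simplicial space no longer makes sense, and this bookkeeping is exactly where the content of the cited proofs lies (they subdivide morphisms and do surgery on the new level sets, i.e.\ on objects, or work in a model where outgoing boundaries are allowed to move). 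Relatedly, the iteration you propose, with its acknowledged uniform-finiteness problem, is avoided in the standard set-up by making each $0$-simplex of surgery data \emph{complete} (containing all handles needed at once) and comparing the two augmentations by a single explicit homotopy rather than by a retraction.

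The second gap is the general-position claim. At the top index $i=k=\tfrac d2-1$ (for $d$ even) the cores $D^{i+1}$ are exactly half-dimensional in $W$: generic maps are then not embeddings, and two such discs cannot generically be made disjoint, so ``exists generically by Whitney whenever $2(i+1)\le d$'' is valid only for strict inequality and fails precisely in the cases the theorem is wanted for ($d=2$, $k=0$; $d=2n$, $k=n-1$). The arguments you are implicitly following sidestep this by arranging that only the attaching data --- spheres of dimension $\le k<\tfrac d2$ --- has to be disjointly embedded in the manifolds, the rest of each handle being embedded in the ambient $[0,t]\times V$ with $\dim V\gg 0$; some device of this kind is needed and is absent from your sketch. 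Finally, since $\Theta$ is an arbitrary $\mathrm{GL}_d(\R)$-space, there is no range in which ``obstructions to $\Theta$-extensions vanish'': such obstructions would lie in homotopy groups of $\Theta$, about which nothing is assumed, so the second use you assign to the bound $k\le\tfrac d2-1$ cannot be correct. In a correct argument the $\Theta$-structure on a handle is simply part of the surgery data, inherited from the given structures, and no hypothesis on $\Theta$ enters this step.
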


\subsection{Cohomology of morphism spaces}
\label{sec:cohomology}

Let us now return to the topic of characteristic classes of smooth manifold bundles.  For concreteness we will again first explain the case of non-orientable surfaces.

This is in fact just a matter of collecting together all the ingredients outlined above.  Indeed, let us we apply Proposition~\ref{prop:group-completion} to the topologically enriched category $C = \mathcal{C}_2$ with $J = (\N,\leq)$ and $y: J \to \mathcal{C}_2$ given on objects by letting each $y(n)$ be the image of some embedding $S^1 \hookrightarrow V$ and on the generating morphisms $n < n+1$ as an embedded cobordism diffeomorphic to $([n,n+1] \times S^1)\# \R P^2$.  Then Wahl's theorem implies that the assumption of Proposition~\ref{prop:group-completion} is satisfied, and the proposition implies an isomorphism between the cohomology of $B\Diff_{\partial}(W_{h,k})$ in the stable range, and the cohomology of a path component of $\Omega^\infty MTO(2)$.  We deduce a map of graded $\Q$-algebras
\begin{equation*}
  \Q[x_1,x_2,\dots] \to H^*(B\Diff_\partial(W_{h,k});\Q),
\end{equation*}
which is an isomorphism in the stable range (cohomological degrees $\leq (h-3)/3$, by Randal-Williams' improved range).

\begin{remark}
  The corresponding statement for oriented surfaces is the celebrated Madsen--Weiss theorem \cite{MW}, establishing the Mumford conjecture.  It may be proved using cobordism categories of oriented surfaces by an argument similar to the one outlined above, replacing Wahl's theorem with Harer's theorem.  This method of using cobordism categories to prove Madsen and Weiss' theorem was first used in \cite{GMTW}.  The insight that restricted cobordism categories are useful for studying stable homology goes back to Ulrike Tillmann's paper \cite{Tillmann}, in the surface case.
\end{remark}
\begin{remark}
  The homological stability results established in \cite{GR-W3,GR-W4} for manifolds of even dimension $d = 2n$ may be used in a similar way to Harer's and Wahl's results, and the restricted cobordism categories $\mathcal{C}_{2n}^{n-1}$ may be used in a similar way, to determine the stable homology.  See op.\ cit.\ for precise statements and detailed proofs.
  
  The situation for odd dimensional manifolds is currently more mysterious.  While Perlmutter's homological stability results \cite{PerlmutterProdSpheres,PerlmutterLink,PerlmutterStabHand} give explicit stable ranges, we do not currently know the corresponding stable homology.  The statement in Theorem~\ref{thm:surgery-on-morphisms} is not quite strong enough in the odd dimensional case $d = 2n+1$: Proposition~\ref{prop:group-completion} does not seem to apply to $\mathcal{C}_{2n+1}^{n-1}$.
\end{remark}


\markboth{\exauthors}{Exercises for \bettershorttitle}

\section*{Exercises (by A.\ Debray, S.\ Galatius, M.\ Palmer)}\refstepcounter{section}
\addcontentsline{toc}{section}{Exercises (by A.\ Debray, S.\ Galatius, M.\ Palmer)}
\label{sec:exercises}

The four lectures at the summer school were accompanied by three exercise sessions, TA'd by Arun Debray\footnote{University of Texas, Austin.  \href{mailto:a.debray@math.utexas.edu}{a.debray@math.utexas.edu}} and Martin Palmer\footnote{University of Bonn, Germany.  Current affiliation: Mathematical Institute of the Romanian Academy.  \href{mpanghel@imar.ro}{mpanghel@imar.ro}}.  The exercises below are edited versions of the problem sets: a few have been omitted and some have been rewritten for clarity.  We thank the participants in the exercise sessions for their feedback on the original versions.

We have mostly preserved the ordering used at the summer school (but added a few suggestions for doing certain exercises out of order).  Exercises labeled with an asterisk are likely more challenging.

\subsection{Exercise set 1}

\begin{exercise}\label{cob-groups}
Recall that $\mathfrak{N}_d$ denotes the abelian group of smooth, closed $d$-manifolds up to cobordism, with respect to the operation of disjoint union.
\begin{itemize}
\item[(a)] Show that the Klein bottle is nullbordant.
\item[(b)] Prove that the disjoint union of two manifolds of positive dimension is cobordant to their connected sum.  (The connected sum depends on a choice of chart in each manifold, but the claim is true for any choice.)  
\item[(c)] Deduce that every cobordism class except $0 \in \mathfrak{N}_0$ contains a connected manifold.
\item[(d)] Prove, without invoking the theorem of Thom (but you may use the classification of surfaces), that the abelian groups $\mathfrak{N}_0$, $\mathfrak{N}_1$ and $\mathfrak{N}_2$ are isomorphic to $\bZ/2\bZ$, $0$ and $\bZ/2\bZ$, respectively.  
\item[] (Hint: to show that $\mathfrak{N}_2 \neq 0$, check that Euler characteristic modulo 2 defines a well defined surjection $\mathfrak{N}_2 \to \bZ/2\bZ$.)
\end{itemize}
Write $\Omega_d$ for the analogous oriented cobordism group: the abelian group of smooth, closed, \emph{oriented} $d$-manifolds up to \emph{oriented} cobordism, with respect to the operation of disjoint union.
\begin{itemize}
\item[(e)] Prove that the abelian groups $\Omega_0$, $\Omega_1$ and $\Omega_2$ are isomorphic to $\bZ$, $0$ and $0$ respectively.
\end{itemize}
\end{exercise}

\begin{exercise}\label{eift}
In this exercise we construct two elementary ``invertible field theories'' (in the naive sense, ignoring symmetric monoidal structures). Recall that a (naive) invertible field theory is a functor from the abstract cobordism category $\mathrm{Cob}_d$ to a groupoid; we will construct two examples where the target is a group.
\begin{itemize}
\item[(a)] Construct a functor
\[
F_d \colon \mathrm{Cob}_d \longrightarrow \mathfrak{N}_d
\]
(where the group $\mathfrak{N}_d$ is considered as a category with one object) such that the restriction to $\mathrm{End}_{\mathrm{Cob}_d}(\varnothing)$ sends a diffeomorphism class of $d$-manifolds to its cobordism class.
\item[] (\emph{Hint}: note that $\mathrm{Cob}_d$ is a disjoint union (coproduct) of subcategories indexed by the elements of $\mathfrak{N}_{d-1}$, so it will suffice to define $F$ on the full subcategory of $\mathrm{Cob}_d$ on nullbordant $(d-1)$-manifolds.)
\item[(b)] Construct a functor
\[
E_d \colon \mathrm{Cob}_d \longrightarrow \bZ
\]
(where the group $\bZ$ is considered as a groupoid with one object) such that the restriction to $\mathrm{End}_{\mathrm{Cob}_d}(\varnothing)$ sends a diffeomorphism class of $d$-manifolds to its Euler characteristic.
\item[(c)] $(*)$ In the case $d=2$, using the classification of surfaces, show that the functor
\[
\mathrm{Cob}_2[\mathrm{Cob}_2^{-1}] \longrightarrow \bZ,
\]
induced by $E_2$, is an equivalence.\footnote{This is not so easy to do by hand, in fact it is a research paper \cite[Theorem 3.7]{JuerTillmann}.  This is a more reasonable exercise if you do Exercise~\ref{exerc:5.3.1} first.}
\end{itemize}
\end{exercise}



\begin{exercise}
  In this exercise as in the text, the fundamental groupoid of an (unbased) space $X$ is denoted $\pi_1(X)$.
\begin{itemize}
\item[(a)] Let $C$ be the poset of all non-empty, proper subsets of $\{0, 1, 2, 3\}$, considered as a category. Prove that $BC$ is homeomorphic to $S^2$ and hence that $\pi_1(BC)$ is equivalent to a trivial category.
\item[(b)] Let $C$ be the category with exactly two objects $a$, $b$ and exactly two non-identity morphisms $f$, $g$, which both have source $a$ and target $b$. Prove that $BC$ is homeomorphic to $S^1$ and describe explicitly an equivalence of groupoids $\pi_1(BC) \to \bZ$.
\item[(c)] Combining the above ideas, find a finite category $C$ such that the abelian group $\pi_2(BC,x)$ is infinitely-generated for any object $x$.
\end{itemize}
\end{exercise}

\begin{exercise}$(*)$  Another variant of the cobordism group is the \emph{group of homotopy $d$-spheres} $\Theta_d$.  To define this, let us first denote by $\mathcal{M}_d$ the abelian monoid of isomorphism classes of smooth, closed, connected and oriented $d$-manifolds, up to oriented diffeomorphism, under the operation of connected sum (which is well defined up to oriented diffeomorphism).  Let $\mathcal{S}_d$ be the subset of diffeomorphism classes of $d$-manifolds that are homotopy equivalent to the sphere $S^d$, which we call \emph{homotopy spheres}.
\begin{itemize}
\item[(a)] Prove that, if $M$ and $N$ are homotopy spheres, so is their connected sum.
\end{itemize}
Hence $\mathcal{S}_d$ is a submonoid. Two manifolds $M,N \in \mathcal{M}_d$ are called \emph{$h$-cobordant} if there is a cobordism $W$ between them such that the inclusions $c_{\mathrm{in}}$ and $c_{\mathrm{out}}$ are both homotopy equivalences.
\begin{itemize}
\item[(b)] Prove that $h$-cobordism $\sim_h$ induces an equivalence relation on $\mathcal{S}_d$.
\item[(c)] Prove that this equivalence relation is compatible with the connected sum operation.
\end{itemize}
Hence we have a well-defined quotient monoid $\Theta_d \coloneqq \mathcal{S}_d/{\sim_h}$ with respect to the operation of connected sum.
\begin{itemize}
\item[(d)] Prove that $\Theta_d$ is a group.\footnote{This is Theorem 1.1 of the influential paper \cite{KervaireMilnor}, which also studies the group $\Theta_d$ in depth.  It is always a finite abelian group, and the smallest $d$ for which it is non-trivial is $\Theta_7 \cong \bZ/28\bZ$.  In fact, except possibly for $d=4$, the quotient $\mathcal{S}_d \to \Theta_d$ is an isomorphism, in which case $\mathcal{S}_d$ is also a (finite) group. }
\end{itemize}
\end{exercise}

\subsection{Exercise set 2}

\begin{exercise}\label{diffeomorphism}
  This exercise studies the difference between being diffeomorphic and being diffeomorphic as cobordisms.
  \begin{itemize}
  \item[(a)] Give examples of the following data: two closed $(d-1)$-manifolds $M_0$ and $M_1$ and two cobordisms $(W,\cin,\cout)$ and $(W',\cin',\cout')$, both between $M_0$ and $M_1$, which are \emph{not} diffeomorphic as cobordisms, yet $W$ is diffeomorphic to $W'$.  (Hint: start with $d=1$.)
  \item[(b)] Let $M = M_0 \amalg M_1$ be the disjoint union.  Construct a bijection $\Cob_d(M_0,M_1) \cong \Cob_d(M,\emptyset)$, by sending a triple $(W,\cin,\cout)$ to a cobordism of the form $(W,?,\emptyset)$.  To study the difference between being diffeomorphic and being diffeomorphic as cobordisms, it therefore suffices to consider the case $M_1 = \emptyset$.
  \item[(c)] Let $\Diff(M)$ denote the group of diffeomorphisms $M \to M$.  Construct an action of $\Diff(M)$ on the set $\Cob_d(M,\emptyset)$ by ``reparametrizing the boundary''.  Show that two cobordisms $M \leadsto \emptyset$ are diffeomorphic (as abstract manifolds) if and only if they represent elements of $\Cob_d(M,\emptyset)$ which are in the same $\Diff(M)$-orbit.
  \end{itemize}
\end{exercise}

\begin{exercise}\label{groupoid}
Let $D$ be the groupoid defined abstractly as $\mathrm{Ob}(D) = \bZ/2\bZ$, morphism sets
\begin{equation*}
D(a,b) = \begin{cases}
\bZ & a=b \\
\varnothing & a \neq b,
\end{cases}
\end{equation*}
and composition given by addition in $\bZ$. As explained in Example~\ref{example:formerly-2-19} in the notes, this groupoid is equivalent to the fundamental groupoid of $\Omega T_{1,\mathbb{R}^2} \cong \Omega \mathbb{R}\bP^2$, so it should also be the target of a universal functor from $h \mathcal{C}_1^{\mathbb{R}}$ to a discrete groupoid.  The goal of this exercise is to verify this directly, by geometric constructions.

\begin{itemize}
\item[(a)] Construct a functor
\begin{equation*}
f \colon h \mathcal{C}_1^{\mathbb{R}} \longrightarrow D,
\end{equation*}
defined by sending an object (finite subset of $\mathbb{R}$) to its cardinality modulo $2$, and a morphism $W \subset [0,t] \times \mathbb{R}$ from $M_0 \subset \mathbb{R}$ to $M_1 \subset \mathbb{R}$ to the integer
\begin{equation*}
\chi(X) - \chi(X \cap (\{0\} \times \mathbb{R})),
\end{equation*}
where $X \subset [0,t] \times \mathbb{R}$ is the union of components of the complement $([0,t] \times \mathbb{R}) \smallsetminus W$ obtained by coloring them green or red in an alternating way, starting with green for the unbounded component in the $[0,t] \times \{-\infty\}$ direction, and setting $X$ to be the union of the red components.
\item[] More rigorously: a component $C$ of $([0,t] \times \mathbb{R}) \smallsetminus W$ is included in the union $X$ if and only if a ray starting from the interior of $C$, intersecting $W$ transversely and asymptotically equal to $s \mapsto (t/2,-s)$, has an odd number of intersections with $W$. For example, in the following picture, the shaded region is $X$ and its boundary is $W$:
\begin{center}
\begin{tikzpicture}
[x=1mm,y=1mm]
\draw[fill,black!20] (0,10) -- (0,20) .. controls (10,20) and (20,10) .. (30,20) -- (30,15) .. controls (20,15) and (10,10) .. (0,10);
\draw[fill,black!20] (20,5) circle (4);
\draw[fill,white] (20,6) circle (2);
\draw[fill,white] (7,15) circle (3);
\draw (0,0) rectangle (30,30);
\draw (0,10) .. controls (10,10) and (20,15) .. (30,15);
\draw (0,20) .. controls (10,20) and (20,10) .. (30,20);
\draw (20,5) circle (4);
\draw (20,6) circle (2);
\draw (7,15) circle (3);
\end{tikzpicture}
\end{center}
\item[] Check that this indeed defines a functor as claimed.
\item[(b)] Verify by pictures that $f$ factors over an equivalence $h \mathcal{C}_1^{\mathbb{R}}[(h \mathcal{C}_1^{\mathbb{R}})^{-1}] \simeq D$.  
\item[] (Hint: it is recommended to first look at Exercise~\ref{exerc:5.3.1} below.  For a solution to this exercise see the end of these notes.)
\end{itemize}
\end{exercise}

\begin{exercise}[Cobordism categories with tangential structures.]\label{tangent}
\hspace{0pt}

As mentioned in the lectures, there are versions of the cobordism category for manifolds equipped with tangential structures, where a \emph{tangential structure} is a space $\Theta$ equipped with a continuous action of $\mathrm{GL}_d(\mathbb{R})$. A \emph{$\Theta$-structure} on a real vector bundle $E \to X$ is a $\mathrm{GL}_d(\mathbb{R})$-equivariant map $\mathrm{Fr}(E) \to \Theta$, where $\mathrm{Fr}(E)$ is the total space of the frame bundle of the vector bundle. As explained in the lectures, the topological cobordism category $\mathcal{C}_\Theta^V$ is defined similarly to $\mathcal{C}_d^V$, except that each object $M$ is equipped with a $\Theta$-structure on $\R \oplus TM$ and each morphism $W$ is equipped with a $\Theta$-structure on $TW$.

\begin{itemize}
\item[(a)] Unwind the definition of $\Theta$-structure in the cases:
  \begin{itemize}
  \item[(i)] $\Theta = \{*\}$,
  \item[(ii)] $\Theta = \{ \pm 1 \}$, where an element $A$ of $\mathrm{GL}_d(\mathbb{R})$ acts by multiplication by $\mathrm{det}(A)/\lvert \mathrm{det}(A) \rvert$,
  \item[(iii)] $\Theta = \mathrm{GL}_d(\mathbb{R})$, where $\mathrm{GL}_d(\mathbb{R})$ acts on itself by left-multiplication,
 \item[(iv)] $\Theta = \mathrm{GL}_{n}(\mathbb{R})/O(n)$, 
   \item[(v)] $\Theta = \mathrm{GL}_{2n}(\mathbb{R})/\mathrm{GL}_n(\bC)$, when $d=2n$ is even,
  \item[(vi)] $\Theta = Z$, where $Z$ is any topological space, and $\mathrm{GL}_d(\mathbb{R})$ acts trivially on $Z$.
  \end{itemize}
\item[(b)] Fill in the details of the definition of $\mathcal{C}_\Theta^V$ above. As before, $\mathcal{C}_\Theta$ is then defined as the colimit of $\mathcal{C}_\Theta^V$ over all finite-dimensional linear subspaces $V \subset \mathbb{R}^\infty$.
\item[(c)] Define the abstract cobordism category $\mathrm{Cob}_\Theta$ and show that it is equivalent to $h\mathcal{C}_\Theta$.
\item[(d)] Describe explicitly the categories $\mathrm{Cob}_{\{\pm 1\}}$ and $\mathrm{Cob}_Z$ for $d=1$ ($\mathrm{GL}_1(\mathbb{R})$ acts trivially on $Z$).
\end{itemize}

\end{exercise}

\begin{exercise}\label{discrete}
Let $\widetilde{\mathcal{C}}_d^V$ be the (ordinary) category obtained by giving the morphism spaces of $\mathcal{C}_d^V$ the discrete topology. What can you say about the connectivity of the map
\[
B\widetilde{\mathcal{C}}_d^V \longrightarrow B\mathcal{C}_d^V?
\]
\end{exercise}

\begin{exercise}\label{Postnikov}
  Let $F: \Top \to \Top$ be a functor from spaces\footnote{In this exercise it is somewhat important that ``spaces'' means ``compactly generated topological spaces''.} to spaces.
  Applying $F$ to the projections $\pi_0: X_0 \times X_1 \to X_0$ and $\pi_1: X_0 \times X_1 \to X_1$ gives a canonical map $F(X_0 \times X_1) \to F(X_0) \times F(X_1)$.  We say that the functor \emph{preserves finite products} if this canonical map is a homeomorphism for all spaces $X_0$ and $X_1$, and $F$ of a one-point space is again a one-point space.

  If $F$ admits a left adjoint then it preserves all small limits and in particular finite products, but there are interesting examples not of that form.
  \begin{itemize}
  \item[(a)] If $C$ is a topologically enriched category and $F: \Top \to \Top$ preserves finite products, explain how to define a new topologically enriched category $FC$, with the same object set as $C$, but with $(FC)(x,y) = F(C(x,y))$.
  \item[(b)] If $F \to F'$ is a natural transformation between functors preserving finite products, explain how to define an induced topologically enriched functor $FC \to F'C$.
  \item[(c)] Let $\tau_{\leq \infty}(X) = |\Sing(X)|$ denote the geometric realization of the total singular complex.  Recall why $\tau_{\leq \infty}: \Top \to \Top$ preserves finite products, and why the evaluation map $|\Sing(X)| \to X$ defines a natural transformation from $\tau_{\leq \infty}$ to the identity functor, which is always a weak equivalence.  This justifies some details of Example~\ref{ex:Sing-real}, giving a DK equivalence
    \begin{equation*}
      \tau_{\leq \infty} C \xrightarrow{\simeq} C
    \end{equation*}
    to any topologically enriched category $C$ from one whose morphism spaces are all CW complexes.
  \item[(d)] Let $\mathrm{Cosk}_{n}\Sing(X)$ be the simplicial set whose $p$-simplices are continuous maps $\mathrm{sk}_{n}(\Delta^p) \to X$ from the $n$-skeleton of $\Delta^n$, and let $\tau_{\leq n} X = |\mathrm{Cosk}_{n+1}\Sing(X)|$.  Recall why the canonical map $\tau_{\leq \infty} X \to \tau_{\leq n} X$ is a model for \emph{Postnikov truncation}: an $(n+1)$-connected map whose codomain has vanishing homotopy groups above degree $n$, with any basepoint.

    Explain that we get topologically enriched functors
    \begin{equation*}
      C \xleftarrow{\simeq} \tau_{\leq \infty} C \to \tau_{\leq n} C.
    \end{equation*}
    This gives a way of ``Postnikov truncating each morphism space of $C$'', retaining a topologically enriched category.
  \item[(e)]  For a space $X$, regard $\pi_0(X)$ as a topological space using the discrete topology (not the quotient topology from $X$).  Prove that the resulting functor $\pi_0: \Top \to \Top$ preserves finite products.  Construct a natural weak equivalence $\tau_{\leq 0}(X) \to \pi_0(X)$ and deduce a DK-equivalence
    \begin{equation*}
      \tau_{\leq 0} C \xrightarrow{\simeq} hC
    \end{equation*}
    for any topologically enriched category $C$.  In this way the construction $C \mapsto hC$ may be regarded as a special case of the Postnikov truncations $C \mapsto \tau_{\leq n} C$.
  \end{itemize}
\end{exercise}

\begin{exercise}\label{adjoints}
\hspace{0pt}
\begin{itemize}
\item[(a)] Prove that the ``constant simplicial set'' functor $\mathrm{Sets} \to \mathrm{sSets}$ is right adjoint to the functor $\pi_0 \colon \mathrm{sSets} \to \mathrm{Sets}$.
\item[(b)] Let us temporarily write $\Top^\mathrm{lpc}$ for the category of locally path connected topological spaces and continuous maps between such.  Prove that $\pi_0: \Top^\mathrm{lpc} \to \mathrm{Sets}$ is left adjoint to the functor $\mathrm{Sets} \to \Top^\mathrm{lpc}$ assigning a set its discrete topology.
\item[(c)] Prove that the ``discrete topology'' functor $\mathrm{Sets} \to \mathrm{Top}$ does not admit a left adjoint.
\item[] (\emph{Hint}: any functor which admits a left adjoint must preserve all small limits and in particular infinite products.)
\end{itemize}
\end{exercise}

\subsection{Exercise set 3}

\begin{exercise}
  \label{rigidsymm}
  Let $D$ be a rigid symmetric monoidal groupoid, let $1$ denote the monoidal unit, and let $x \in D$ be any object.  Prove that
  \begin{align*}
    \mathrm{End}_D(1) &\to \mathrm{End}_D(1 \otimes x)\\
    f & \mapsto f \otimes \mathrm{id}_x
  \end{align*}
  is an isomorphism of groups.  Using the unitor and its inverse, the codomain may be identified with $\mathrm{End}_D(x)$.

  Does this say anything useful when $D = \mathrm{Cob}_d[\mathrm{Cob}_d^{-1}]$?  (Hint: first show that $D$ is rigid.)
\end{exercise}

\begin{exercise}\label{exerc:5.3.1}
  Let $C$ be a small category and let $\gamma: C \to C[C^{-1}]$ be the universal functor to a groupoid.  In particular we have, for each object $x \in C$,
  \begin{equation}\label{eq:gamma}
    \gamma: \mathrm{End}_C(x) \to \mathrm{End}_{C[C^{-1}]}(x),
  \end{equation}
  a monoid homomorphism into a group.  The purpose of this exercise is to work out some useful rules for determining the group $\mathrm{End}_{C[C^{-1}]}(x)$, under the additional assumption on $C$ that for any other object $y$, both $C(x,y)$ and $C(y,x)$ are non-empty.  Unless stated otherwise,  we shall in the rest of this exercise make this assumption on $C$ and $x$.
  \begin{itemize}
  \item[(a)]  Prove that the image of~(\ref{eq:gamma}) generates.
  \item[(b)]  Let $y \in C$ be any object, let $w_1, w_2 \in C(y,x)$ and $w_3,w_4 \in C(x,y)$ be any morphisms, and define $a, b, c, d \in \mathrm{End}_C(x)$ by
    \begin{equation*}
      a = w_1 \circ w_3, \quad b = w_2 \circ w_3\quad
      c = w_1 \circ w_4, \quad 
      d = w_2 \circ w_4.
    \end{equation*}
    Then $\gamma(a)$, \dots, $\gamma(d)$ are elements of the group $\mathrm{End}_{C[C^{-1}]}(x)$.  Prove that\footnote{See \cite[Section 6]{BokstedtSvane} for further discussion.}
    \begin{equation}\label{eq:relation}
      \gamma(a) \circ (\gamma(b))^{-1} = \gamma(c) \circ (\gamma(d))^{-1}.
    \end{equation}
  \end{itemize}
  Let us now specialize to $C \subset h\mathcal{C}_d^V$ the full subcategory on those objects admitting a morphism from $\emptyset$, and set $x = \emptyset$.  To avoid special cases let us also take $\dim(V) > 0$ (see Remark~\ref{rmk:dimension-zero} for a discussion of that special case).
  \begin{itemize}
  \item[(c)] Prove that this $C$ and $x$ satisfy the assumption above.
  \item[(d)] Convince yourself that the domain of~(\ref{eq:gamma}) is a commutative monoid, and deduce that the codomain is an abelian group.
  \item[(e)] Convince yourself that, moreover, the domain of~(\ref{eq:gamma}) is a \emph{free} commutative monoid\footnote{An easier exercise is to prove this under the assumption $\dim(V) > 2d+1$ where the domain of~(\ref{eq:gamma}) is free on the set of classes of path connected manifolds.  For a solution in the general case see the end of these notes.}.
  \item[(f)] Use the tools developed above to show that, in $\mathrm{Cob}_2[(\mathrm{Cob}_2)^{-1}]$, the endomorphisms of $\emptyset$ given by the torus, by the Klein bottle and by the empty surface are equal.
  \item[(g)] Return to Exercise \ref{eift}(c) and Exercise \ref{groupoid}(b).
  \end{itemize}
\end{exercise}



\begin{exercise}\label{exercise:cob-cat-with-connectivity}
  In this exercise we shall study the subcategories $\Cob_d^k \subset \Cob_d$ and $\mathcal{C}_d^k$ from Section~\ref{sec:restrictions}.
\begin{itemize}
\item[(a)] Verify that $\Cob_d^k \subset \Cob_d$ is indeed a subcategory.  Deduce that $\mathcal{C}_d^k$ is a subcategory of $\mathcal{C}_d$.
\end{itemize}
In the lectures it was stated that the inclusion $\mathcal{C}_d^k \hookrightarrow \mathcal{C}_d$ induces a weak homotopy equivalence of classifying spaces
\begin{equation}\label{inclusion}
B\mathcal{C}_d^k \longrightarrow B\mathcal{C}_d
\end{equation}
as long as $k \leq \frac{d-2}{2}$. The purpose of this exercise is to investigate, by more elementary means, when the map \eqref{inclusion} induces a bijection on $\pi_0$.
\begin{itemize}
\item[(b)] Prove ``by hand'' that, when $d\geq 2$ and $k=0$, the map \eqref{inclusion} induces a bijection on $\pi_0$.
\item[(c)] Rephrase bijectivity of \eqref{inclusion} on $\pi_0$ as the statement that, given any cobordism $W \colon M_0 \rightsquigarrow M_1$ in $\mathcal{C}_d$, there is a zig-zag of cobordisms between $M_0$ and $M_1$ that each satisfy the $k$-connectivity condition on the outgoing boundary.
\item[(d)] Prove bijectivity of \eqref{inclusion} on $\pi_0$ more generally whenever $k < d/2$. 
\item[] (\emph{Hint}: if you haven't already, learn about \emph{elementary cobordisms} e.g.\ from Milnor's book on the $h$-cobordism theorem.)
\end{itemize}
\end{exercise}

\begin{exercise}
  Let $D$ be the groupoid with one object $\ast$ and $\mathrm{End}_D(\ast) = \bZ$.  Define
  \begin{align*}
    E: \mathrm{Cob}_d \to D    
  \end{align*}
  by sending any object to $\ast$ and a morphism $W: M_0 \leadsto M_1$ to $\chi(W) - \chi(M_0) \in \bZ$.
  \begin{itemize}
    
  \item[(a)] Briefly explain why this is a functor.
  \item[] (You may have already done this in Exercise \ref{eift}(b).)
  \item[(b)] Prove that $E$ is isomorphic to the trivial functor (sending any object to $\ast$ and any morphism to 0) when $d$ is odd.
  \item[(c)] Prove that $E$ is not isomorphic to the trivial functor when $d$ is even.
  \item[(d)] Can you promote $E$ to a symmetric monoidal functor?  (Hint: use addition in $\bZ$ as $\otimes$.  In this case the associator, symmetry, and unitor can all be taken to be the identity.) 
  \end{itemize}
  This is sometimes called the ``Euler TQFT''.
  \begin{itemize}
  \item[(e)] Let $V = \mathbb{R}^m$, let $e \colon T_{d,\mathbb{R}^{m+1}} \to K(\bZ,m+1)$ be a based map, and let $\Omega^m T_{d,\mathbb{R}^{m+1}} \to \Omega^m K(\bZ,m+1)$ be the $m$-fold loop of $e$. Prove that the fundamental groupoid of $\Omega^m K(\bZ,m+1)$ is equivalent to $D$, and explain how any such map $e$ gives rise to a symmetric monoidal invertible field theory $\mathrm{Cob}_d \to D$ if $m \geq 3$.
  \item[(f)] $(*)$ For $d$ even, let $e \in \widetilde{H}^{m+1}(T_{d,\mathbb{R}^{m+1}})$ be the cohomology class corresponding to the Euler class of the universal bundle under Thom isomorphism
    \begin{equation*}
      H^d(\Gr_d(\R \oplus V);\widetilde\bZ) \xrightarrow{\cong} H^{m+1}(T_{d,\R^{m+1}};\Z),
    \end{equation*}
    where $\widetilde{\bZ}$ denotes the local system corresponding to the orientation character of the canonical bundle (this same local coefficient system shows up in both the Thom isomorphism and in the Euler class).  Use the same notation $e: T_{d,\mathbb{R}^{m+1}} \to K(\bZ,m+1)$ for a pointed map classifying this cohomology class, and prove that the TQFT defined in (e) agrees with the Euler TQFT.  (Hint: in this part of the exercise it may be necessary to know a geometric description of the map $\mathcal{C}_d(\emptyset,\emptyset) \to \Omega^{n+1} T_{d,\R^{n+1}}$.  This is the \emph{Pontryagin--Thom construction}, see e.g.\ \cite[p.~197]{GMTW}.)
  \end{itemize}
\end{exercise}

\begin{exercise}\label{exerc:k-invariant}
  If $X$ is a rigid symmetric monoidal groupoid, it is determined up to equivalence by three pieces of data:
  $\pi_0X$ (the abelian group of isomorphism classes of objects), $\pi_1X = \mathrm{Aut}(1_X)$, and something
  called the \emph{$k$-invariant}, which we proceed to define (see Remark~\ref{remark:postnikov-for-spaces} for the related notion of $k$-invariants of spaces). Given any $x\in X$, there is a canonical isomorphism
  $\text{--}\otimes\mathrm{id}_x \colon \mathrm{Aut}(1_X)\to\mathrm{Aut}(x)$. The $k$-invariant of $X$ is the map
  $\pi_0X\otimes\mathbb Z/2\to\pi_1 X$ which to $x\in\pi_0X$ assigns the image of the
  symmetry $\sigma\colon x\otimes x\to x\otimes x$ in $\mathrm{Aut}(x\otimes x)\cong\mathrm{Aut}(1_X) = \pi_1X$.

  Compute $\pi_0$, $\pi_1$, and $k$ for the following rigid symmetric monoidal groupoids.
  \begin{itemize}
  \item[(a)] The category $\mathrm{Vect}_k^\sim$ of invertible vector spaces over a field $k$.
  \item[(b)] Assuming $\mathrm{char}(k)\ne 2$, the category $\mathrm{sVect}_k^\sim$ of invertible \emph{super
  vector spaces}, i.e.\ the category of invertible $\mathbb Z/2$-graded vector spaces with the symmetry $a\otimes b \mapsto (-1)^{\deg a\deg b}b\otimes a$.
  \item[(c)] $\mathrm{Cob}_1[\mathrm{Cob}_1^{-1}]$.
  \item[(d)] The same as in (c), but with the oriented $1$-dimensional cobordism category.
  \end{itemize}
\end{exercise}

\begin{exercise}
Let us denote by $h\mathring{\mathcal{C}}_d^V$ the oriented version of the category $h\mathcal{C}_d^V$, where both $(d-1)$-manifolds and cobordisms are equipped with compatible orientations.\footnote{This was called $h\mathcal{C}_{\{\pm 1\}}^V$ in Exercise \ref{tangent}.} There is a functor
\[
F_{d,V} \colon h\mathring{\mathcal{C}}_d^V \longrightarrow h\mathcal{C}_d^V
\]
that forgets all orientations.
\begin{itemize}
\item[(a)] When $d=\mathrm{dim}(V)$ or $d=0$, construct a section of $F_{d,V}$.    (``Section'' means a functor $G$ in the other direction, such that $F_{d,V} \circ G$ is naturally isomorphic to the identity.)
\item[(b)] When $0<d<\mathrm{dim}(V)$, prove that the functor $F_{d,V}$ does not admit a section.
\end{itemize}
(\emph{Suggestion}: first consider the cases $(d,V) = (1,\mathbb{R})$ and $(d,V) = (1,\mathbb{R}^2)$, and look at the picture in Exercise \ref{groupoid}.)
\end{exercise}



\section*{Solutions to selected exercises (by A.\ Debray, S.\ Galatius, M.\ Palmer)}\refstepcounter{section}
\addcontentsline{toc}{section}{Solutions to selected exercises (by A.\ Debray, S.\ Galatius, M.\ Palmer)}

\subsection{Solution to Exercise~\ref{groupoid}(b)}
\label{sec:5-2-2-b}

In part (a) of the exercise, we defined a functor $f\colon \cC_1^\bR\to D$ by sending a $0$-manifold $M\subset\bR$
to $\# M\bmod 2$ and a bordism $W$ to $\chi(X) - \chi(X\cap \bR\times\set 0)\in\bZ$. This is well-defined because
if $M$ and $N$ are bordant, their cardinalities mod 2 agree, so $\Hom_D(f(M), f(N)) = \bZ$; then, the additivity
formula for Euler characteristic implies that $f$ is a functor.

Since $D$ is a discrete category, $f$ factors as a functor $f\colon\hC\to D$; since $D$ is a groupoid, $f$ factors
further into a functor $f\colon\ghC\to D$, where $f(W^{-1})\coloneqq f(W)^{-1}$ for a bordism $W$. For part (b),
we show $f\colon \ghC\to D$ is an equivalence of groupoids, meaning that it is fully faithful and essentially
surjective.  Essential surjectivity, i.e.\ that $f$ induces a surjection on the sets of isomorphism classes of
objects, is clear: the empty set in $\ghC$ maps to $0\in\mathrm{Ob}(D)$, and a single point in $\ghC$ maps to
$1\in\mathrm{Ob}(D)$.

First we reduce to considering the empty manifold, using an argument similar to Exercise~\ref{rigidsymm}.
\begin{lemma}
\label{just_auts}
Let $f\colon X\to Y$ be a map of small groupoids. If $\pi_0 f\colon \pi_0X\to\pi_0Y$ (i.e.\ the induced map on sets of
isomorphism classes) is a bijection, then $f$ is fully faithful if and only it induces isomorphisms $\Aut_X(x) \to \Aut_Y(f(x))$ for all $x \in X$.
\end{lemma}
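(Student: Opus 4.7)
The plan is to prove both directions separately, with the forward direction being essentially tautological and the backward direction the one that uses the $\pi_0$ hypothesis.

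For the forward implication, I note that $\mathrm{Aut}_X(x) = \mathrm{Hom}_X(x,x)$ and $\mathrm{Aut}_Y(f(x)) = \mathrm{Hom}_Y(f(x),f(x))$ by definition, so if $f$ is fully faithful then the map on automorphism groups is a bijection, and in fact a group isomorphism because $f$ preserves composition and identities.

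For the backward implication, fix objects $x, y \in X$; I want to show $f$ induces a bijection $\mathrm{Hom}_X(x,y) \to \mathrm{Hom}_Y(f(x), f(y))$. I would split into two cases depending on whether $x$ and $y$ lie in the same isomorphism class of $X$. If they do not, then $\mathrm{Hom}_X(x,y) = \emptyset$ (since $X$ is a groupoid), and by the assumed bijectivity of $\pi_0 f$, $f(x)$ and $f(y)$ also lie in distinct isomorphism classes of $Y$, so $\mathrm{Hom}_Y(f(x), f(y)) = \emptyset$ as well; the map is vacuously a bijection.

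In the remaining case, I choose some isomorphism $g \colon x \to y$ in $X$. Post-composition with $g$ gives a bijection $g_\ast \colon \mathrm{Aut}_X(x) \to \mathrm{Hom}_X(x,y)$, with inverse $(g^{-1})_\ast$, and similarly post-composition with $f(g)$ gives a bijection $f(g)_\ast \colon \mathrm{Aut}_Y(f(x)) \to \mathrm{Hom}_Y(f(x), f(y))$. Functoriality of $f$ makes the evident square commute, so the map on $\mathrm{Hom}$ sets factors as the composition of three bijections: the chosen $g_\ast^{-1}$, the isomorphism $\mathrm{Aut}_X(x) \to \mathrm{Aut}_Y(f(x))$ supplied by hypothesis, and $f(g)_\ast$. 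I expect no real obstacle here; the only thing to be mindful of is phrasing the argument so that the groupoid assumption (needed to invert $g$ and to conclude emptiness of $\mathrm{Hom}$ sets between non-isomorphic objects) is explicitly used.
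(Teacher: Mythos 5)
Your proposal is correct and follows essentially the same route as the paper's proof: the nontrivial direction is handled by choosing an isomorphism $x\to y$, using post-composition bijections on $\Hom$-sets together with the commutative square from functoriality of $f$, and disposing of the case of non-isomorphic objects via bijectivity of $\pi_0 f$ (with the forward direction being tautological). No gaps.
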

\begin{proof}
If $\psi\colon x\to x'$ is any map in $X$, composing with $\psi$ is an isomorphism $\Aut_X(x)
\overset\cong\to\Hom_X(x, x')$. In the diagram
\begin{equation}
\begin{gathered}
	\xymatrix{
		\Aut_{X}(x)\ar[r]^-\psi_-\cong\ar[d]^f & \Hom_X(x,x')\ar[d]^f\\
		\Aut_Y(f(x))\ar[r]^-{f(\psi)}_-\cong & \Hom_Y(f(x), f(x')),
	}
\end{gathered}
\end{equation}
the horizontal arrows, given by composing with $\psi$ and $F(\psi)$ respectively, are bijections. Hence if the
left vertical arrow is an isomorphism, so is the right vertical arrow. This suffices: there cannot be any $x$ and
$x'$ such that $\Hom_X(x, x') = \varnothing$ and $\Hom_Y(f(x), f(x'))\ne\varnothing$ because $\pi_0f$ is an
isomorphism.
\end{proof}
\begin{lemma}
\label{reduce_to_unit}
Suppose the map $\Aut_{\ghC}(\varnothing)\to \Aut_D(0)$ induced by $f$ is an isomorphism. Then $f$ is fully
faithful.
\end{lemma}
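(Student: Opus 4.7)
The plan is to apply Lemma~\ref{just_auts}. I first observe that $f$ induces a bijection on isomorphism classes: two finite subsets of $\R$ are isomorphic in $\ghC$ precisely when they are cobordant, which happens iff they have equal cardinality modulo $2$ (pair the points by arcs), so both $\pi_0(\ghC)$ and $\pi_0(D)$ are $\Z/2$ and $f$ identifies them. It thus remains to show that $f_y\colon \Aut_{\ghC}(y)\to \Aut_D(f(y))$ is an isomorphism for every object $y$, given that $f_\varnothing$ is.

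The main tool is the symmetric monoidal structure on $\ghC$ coming from disjoint union (with unit $\varnothing$), and the corresponding structure on $D$ given by addition in $\Z/2$ on objects and by addition in $\Z$ on morphisms. I would verify that the formula defining $f$ makes it into a symmetric monoidal functor: on objects this is just $\#(M\sqcup N)=\#M+\#N \pmod 2$, and on morphisms it reduces to additivity of the two Euler-characteristic terms in the definition under placing two cobordisms side by side. Every object $y$ of $\ghC$ is invertible with inverse itself, because $y\sqcup y$ is the boundary of a disjoint union of arcs; hence the endofunctor $F_y := -\sqcup y \colon \ghC \to \ghC$ is an equivalence of categories, and likewise $G_y := -+f(y) \colon D \to D$.

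Monoidality of $f$ furnishes a natural isomorphism $f\circ F_y \cong G_y \circ f$. Evaluating at $\varnothing$, using the unitor identifications $\varnothing\sqcup y\cong y$ and $0+f(y)=f(y)$, and passing to automorphism groups of $\varnothing$ yields a commutative square
\begin{equation*}
\xymatrix{
\Aut_{\ghC}(\varnothing) \ar[r]^-{\cong} \ar[d]_{f_\varnothing} & \Aut_{\ghC}(y) \ar[d]^{f_y}\\
\Aut_D(0) \ar[r]^-{\cong} & \Aut_D(f(y))
}
\end{equation*}
in which the horizontal arrows are isomorphisms because $F_y$ and $G_y$ are equivalences of categories, and the left vertical arrow is an isomorphism by hypothesis. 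Consequently $f_y$ is an isomorphism as well, and Lemma~\ref{just_auts} then gives fully faithfulness.

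The one substantive point to check carefully is the symmetric monoidality of $f$; the rest of the argument is purely categorical. This verification ultimately comes down to additivity of Euler characteristic under disjoint unions, which is where I expect the only real bookkeeping to occur — in particular making sure that, when two cobordisms are placed side by side in $[0,t]\times\R$, the coloring rule used to define the shaded region $X$ is compatible with disjoint union so that both $\chi(X)$ and $\chi(X\cap(\{0\}\times\R))$ split as sums over the two pieces.
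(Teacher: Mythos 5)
Your overall skeleton (reduce via Lemma~\ref{just_auts} to automorphism groups, then transport the known isomorphism at $\varnothing$ to the other isomorphism class by an equivalence of the form ``disjoint union with a point'') is the right shape, and it is close to what the paper does. But the step you flag as ``only bookkeeping'' --- that $f$ is (symmetric) monoidal for the side-by-side tensor, with monoidality reducing to additivity of the Euler-characteristic terms --- is in fact false, and this is a genuine gap. The coloring rule defining $X$ uses a \emph{downward} ray, so when two cobordisms are stacked in the $\R$-direction the colors of the upper block get flipped whenever the lower block meets a generic vertical line in an odd number of points. Concretely, let $W_1$ be a single strand $[0,t]\times\{p\}$ (a morphism $\{p\}\to\{p\}$, with $f(W_1)=0$) and $W_2$ a single circle (a morphism $\varnothing\to\varnothing$, with $f(W_2)=1$). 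If the circle is placed \emph{above} the strand, the red region $X$ is the component above the strand minus the closed disc bounded by the circle, so $\chi(X)=0$ and $\chi(X\cap(\{0\}\times\R))=1$, giving $f=-1\neq f(W_1)+f(W_2)=1$; placing the circle below gives $+1$. Since in this example the source and target objects of the tensored morphism coincide with those of $W_1$ (up to the unitor with $\varnothing$), the naturality square for any candidate monoidality constraint $\eta$ forces exact additivity, so no choice of structure maps repairs this: $f$ admits no monoidal structure for the stacking tensor, with either ordering convention. (This is not an accident: with $V=\R$ one only has an $E_1$-type structure, ``above'' and ``below'' are genuinely different, and $f$ detects the difference.) Incidentally, even the existence of the symmetric monoidal structure on $\ghC$ that you invoke is not free for $\dim V=1$; the paper's construction of symmetric structures requires $\dim V\geq 3$.

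The argument can be salvaged, and the repair is essentially the paper's proof: instead of full monoidality one only needs the single endofunctor $F$ of $\hC$ that squeezes everything into $(-\infty,0)$ and adds one strand \emph{on top}, together with two facts: (i) $F'$ is an equivalence of $\ghC$ --- which the paper gets not from invertibility of objects in a monoidal structure but by exhibiting a natural transformation $T\colon F^2\Rightarrow\mathrm{id}$ (capping off the two new points), automatically a natural isomorphism since $\ghC$ is a groupoid; and (ii) the compatibility $f(W)=f(F'_*(W))$, which the paper needs only for $W\in\Aut_{\ghC}(\varnothing)$, where the parity problem above does not occur (adding a top strand then increases $\chi(X)$ and $\chi(X\cap(\{0\}\times\R))$ by the same amount). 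If you want to keep your ``commuting square'' formulation, the correct statement to prove is the weaker one that adding a single strand above everything preserves $f$ on morphisms (true, but requiring the parity analysis), not monoidality of $f$; as written, your proposal rests on a false lemma.
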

\begin{proof}
By Lemma~\ref{just_auts}, we have only to consider automorphism groups of objects in $\ghC$; there are only two
isomorphism classes of objects, given by the empty manifold and a single point, and the assumption on $\pi_0f$ is
satisfied. We will define an endofunctor $F'$ of $\ghC$ and show that it induces an isomorphism
$\Aut(\varnothing)\to\Aut(\pt)$.

First, we define an endofunctor $F$ of $\cC_1^\R$. The idea of $F$ is to disjoint union all objects with a point,
and all bordisms with an interval regarded as a bordism between the new points. Fix a diffeomorphism
$\phi\colon\R\to (-\infty, 0)$ isotopic to the identity on $\R$, and let $\Phi\colon
[0,t]\times\R\to[0,t]\times(-\infty, 0)$ denote $(s,x)\mapsto
(s, \phi(x))$.
\begin{itemize}
	\item Given an object $M\subset\R$, $F(M)\coloneqq \phi(M)\cup \set 1$.
	\item Given a bordism $W\subset [0,t]\times\R$, let $F(W)\coloneqq \Phi(W)\cup [0,t]\times\set 1$.
\end{itemize}
This is continuous on spaces of objects and morphisms, hence passes to an endofunctor of $\hC$, and this descends to
an endofunctor $F'$ of $\ghC$ by $F'(x^{-1})\coloneqq F(x)^{-1}$.

Choose an isotopy $H\colon [0,1]\times\R\to\R$ from $\phi\circ\phi$ to the identity. We define a natural
transformation $T\colon F^2\Rightarrow\id$, where $F$ is regarded in $\hC$, by ``capping off'' the two new points.
For every object $x$ of $C_1^\R$, we produce a bordism $T_x\colon F^2 x\to x$, defined to be the union of
$\set{H(s, p): s\in[0,1], p\in x}$ and a semicircle in $[0,1]\times\R$ containing the two new points of
$F^2 x$ as its boundary, as in the following picture.
\newcommand{\stretchamt}{1.8}
\begin{equation}
\wg{\begin{tikzpicture}
	\begin{scope}
		\clip (0, 0) rectangle (1.5, 1.5);
		\foreach \y in {0.2, 0.425, 0.65} {
			\draw (0, \y) to[out=0, in=180] (1.5, \stretchamt*\y);
		}
		\draw (0, 1.17) circle (0.17);
	\end{scope}
	\foreach \y in {0.2, 0.425, 0.65} {
		\fill (0, \y) circle (0.05);
		\fill (1.5, \stretchamt*\y) circle (0.05);
	}
	\foreach \x in {0, 1} {
		\draw (1.5*\x, 0) -- (1.5*\x, 1.5);
	}
	\fill (0, 1) circle (0.05);
	\fill (0, 1.34) circle (0.05);
	\node[below] at (0, 0) {$F^2x$};
	\node[below] at (1.5, 0) {$\vphantom{F^2}x$};
\end{tikzpicture}}
\end{equation}
To check that this is in fact a natural transformation after passing to $\hC$, we need that for every bordism
$W\colon x\to y$, the diagram
\begin{equation}
\begin{gathered}
\xymatrix{
	F^2x\ar[r]^-{T_x}\ar[d]_{F^2W} & x\ar[d]^W\\
	F^2y\ar[r]^-{T_y} & y
}
\end{gathered}
\end{equation}
commutes. The picture proof is that the following two bordisms are equal in $\hC$.
\begin{equation}
\wg{\begin{tikzpicture}
	\begin{scope} 
		\clip (0, 0) rectangle (1.5, 1.5);
		\foreach \y in {0.2, 0.425, 0.65} {
                  \draw (0, \y) to[out=0, in=180] (1.5, \stretchamt*\y);
                      }
		\draw (0, 1.17) circle (0.17);
	\end{scope}
	\begin{scope} 
		\clip (1.5, 0) rectangle (3, 1.5);
		\draw (1.5, \stretchamt*0.2) -- (3, \stretchamt*0.2);
		\draw (1.5, \stretchamt*0.5375) circle (\stretchamt*0.1125);
		\draw (3, \stretchamt*0.6) circle (\stretchamt*0.15);
	\end{scope}
	\foreach \y in {0.2, 0.425, 0.65} {
		\fill (0, \y) circle (0.05);
		\fill (1.5, \stretchamt*\y) circle (0.05);
	}
	\foreach \x in {0, 1} {
		\draw (1.5*\x, 0) -- (1.5*\x, 1.5);
	}
	\draw (3, 0) -- (3, 1.5);
	\fill (0, 1) circle (0.05);
	\fill (0, 1.34) circle (0.05);
	\foreach \y in {\stretchamt*0.2, \stretchamt*0.45, \stretchamt*0.75} {
		\fill (3, \y) circle (0.05);
	}
	\node[below] at (0, 0) {$F^2x$};
	\node[below] at (1.5, 0) {$\vphantom{F^2}x$};
	\node[below] at (3, 0) {$\vphantom{F^2}y$};
\end{tikzpicture}}\qquad =\qquad \wg{\begin{tikzpicture}
	\begin{scope} 
		\clip (0, 0) rectangle (1.5, 1.5);
		\draw (0, 0.2) -- (1.5, 0.2);
		\draw (0, 0.5375) circle (0.1125);
		\draw (1.5, 0.6) circle (0.15);
		\foreach \y in {1, 1.34} {
			\draw (0, \y) -- (1.5, \y);
		}
	\end{scope}
	\begin{scope} 
		\clip (1.5, 0) rectangle (3, 1.5);
		\draw (1.5, 0.2) to[out=0, in=180] (3, \stretchamt*0.2);
		\draw (1.5, 1.17) circle (0.17);
	\end{scope}
	\foreach \y in {0.45, 0.75} {
		\draw (1.5, \y) to[out=0, in=180] (3, \stretchamt*\y);
		\fill (1.5, \y) circle (0.05);
		\fill (3, \stretchamt*\y) circle (0.05);
	}
	\foreach \x in {0, 1.5, 3} {
		\draw (\x, 0) -- (\x, 1.5);
	}
		\fill (0, 0.2) circle (0.05);
		\fill (1.5, 0.2) circle (0.05);
		\fill (3, \stretchamt*0.2) circle (0.05);
	\fill (0, 0.425) circle (0.05);
	\fill (0, 0.65) circle (0.05);
	\foreach \x in {0, 1.5} {
		\foreach \y in {1, 1.34} {
			\fill (\x, \y) circle (0.05);
		}
	}
	\node[below] at (0, 0) {$F^2x$};
	\node[below] at (1.5, 0) {$F^2y$};
	\node[below] at (3, 0) {$\vphantom{F^2}y$};
\end{tikzpicture}}.
\end{equation}
(Though we drew a specific bordism $x\to y$, the argument works in general.)

$T$ also descends to a natural transformation $T'\colon (F')^2\Rightarrow\id$, and every natural transformation
between functors into a groupoid is a natural isomorphism. Thus applying $F'$ defines an isomorphism
\begin{equation}
	F'_*\colon \Aut_{\ghC}(\varnothing)\to\Aut_{\ghC}(\pt).
\end{equation}
This is compatible with the identifications $\Aut_D(0) = \bZ = \Aut_D(1)$, in that if
$W\in\Aut_{\ghC}(\varnothing)$, then $f(W) = f(F_*'(W))$, because applying $F$ increases the Euler characteristics
of $X$ and $X\cap(\set 0\times\R)$ by the same amount. Therefore if
\begin{equation}
	f_*\colon \Aut_{\ghC}(\varnothing)\to\Aut_D(0)
\end{equation}
is an isomorphism, so is
\begin{equation}
	f_*\colon \Aut_{\ghC}(\pt)\to\Aut_D(1).
	\qedhere
\end{equation}
\end{proof}
To show $\Aut_{\ghC}(\varnothing)\to\Aut_D(0) = \bZ$ is surjective, it suffices to produce a preimage of the
generator $1\in\Aut_D(0)$, and a single circle, interpreted as a bordism $W\colon \varnothing\to\varnothing$,
works: $X$ is a disc and does not intersect the incoming boundary, so $f(W) = 1$.

Then, we will show that for every morphism
$W\colon \varnothing\to\varnothing$ in $\hC$, the image of $W$ in $\ghC$ is equal to a composition of $n$ circles,
where $n\in\bZ$ (here $n < 0$ is interpreted as a composition of $-n$ formal inverses of circles). By
Exercise~\ref{exerc:5.3.1}, parts~(a) and~(c), $\Aut_{\ghC}(\varnothing)$ is generated by such $W$, so it will follow that $\Aut_{\ghC}(\varnothing)$ is cyclic.  Since every surjective homomorphism from a cyclic group to $\bZ$ is an
isomorphism, this suffices for Lemma~\ref{reduce_to_unit} to apply and conclude the proof.

For any bordism $W\colon\varnothing\to\varnothing$, the complement of $W$ in $[0,t]\times\R$ has exactly one
unbounded path component $A$: there is at least one because $[0,t]\times\R$ is unbounded, and at most one because
$W$ does not intersect $\partial([0,t]\times\R)$ and is compact, hence is contained in some disc in
$(0,t)\times\R$. Let $\overline A$ denote the closure of $A$ in $(0,t)\times\R$. Then $\overline A$ is a
$2$-manifold with boundary and $\partial\overline A$ is bounded in $[0,t]\times\R$, so $\partial\overline A$ is a
compact $1$-manifold with empty boundary, and therefore a disjoint union of $k$ circles. Call $W$ simple if $k =
1$.
\begin{lemma}
\label{simplecomp}
Every bordism $W\colon\varnothing\to\varnothing$ in $\hC$ is a composition of simple bordisms.
\end{lemma}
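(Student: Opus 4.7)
The approach is to use the freedom to isotope $W$ within $\hC$: any ambient isotopy of $[0,t]\times\R$ rel $\partial([0,t]\times\R)$ gives a continuous path in $\cC_1^{\R}(\varnothing,\varnothing)$, so isotopic cobordisms agree in $\hC$. I will produce such an isotopy carrying the outermost boundary circles of $\overline A$ into disjoint time slabs, after which cutting at times between these slabs gives the required decomposition into simple pieces.

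Let $C_1,\dots,C_k$ be the circles comprising $\partial\overline A$, and let $D_i\subset[0,t]\times\R$ denote the closed disc bounded by $C_i$ on the side opposite $A$. First I observe that the $D_i$ are pairwise disjoint and that $W\subset\bigcup_{i=1}^k D_i$. Indeed, if some $C_j$ lay in $\mathrm{int}(D_i)$, then any path from infinity to $C_j$ in the complement of $W$ would have to cross $C_i\subset W$, contradicting $C_j\subset\partial\overline A$; similarly, any circle of $W$ is reached from infinity by a path whose first crossing with $W$ lies in $\partial\overline A$, placing that circle inside some $D_i$.

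Next, fix real numbers $0=s_0<s_1<\cdots<s_k=t$ and pairwise disjoint closed discs $E_i\subset(s_{i-1},s_i)\times\R$. Because the interior of $[0,t]\times\R$ is a connected $2$-manifold, the space of ordered embeddings of $\coprod_{i=1}^k D^2$ into its interior is path-connected; combining such a path of embeddings with the isotopy extension theorem produces an ambient isotopy $\phi$ of $[0,t]\times\R$, fixing a neighborhood of $\partial([0,t]\times\R)$, with $\phi_0=\mathrm{id}$ and $\phi_1(D_i)\subset E_i$ for each $i$. Set $W':=\phi_1(W)$, so that $[W]=[W']$ in $\hC$. Since $W'\subset\bigcup_i E_i$, the slices $\{s_i\}\times\R$ for $1\le i\le k-1$ are disjoint from $W'$, so $W'$ decomposes as a composition $W_k\circ\cdots\circ W_1$ in $\hC$ with $W_i:=W'\cap([s_{i-1},s_i]\times\R)$, each a bordism $\varnothing\leadsto\varnothing$. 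Inside the strip $[s_{i-1},s_i]\times\R$, the only unbounded component of the complement of $W_i$ is the complement of $\phi_1(D_i)$, and its closure meets $W_i$ in the single circle $\phi_1(C_i)$; hence each $W_i$ is simple.

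The main obstacle is executing the ambient-isotopy step carefully: one must verify that two ordered configurations of $k$ disjointly embedded discs in the interior of a connected surface are ambient isotopic, through configurations avoiding $\partial([0,t]\times\R)$. This is standard, combining path-connectedness of the ordered configuration space of disjoint discs in a connected open surface with the isotopy extension theorem, but it is the only non-trivial input. The degenerate cases $k=0$ (so $W=\varnothing$, the empty composition) and $k=1$ (so $W$ is already simple) are immediate.
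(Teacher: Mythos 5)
Your argument is correct and takes essentially the same route as the paper's proof: identify the circles $C_1,\dots,C_k$ of $\partial\overline A$, use (smooth Schoenflies) that they bound pairwise disjoint closed discs whose union contains $W$, move those discs into disjoint time slabs by a boundary-fixing ambient isotopy/diffeomorphism of $[0,t]\times\R$, and cut at intermediate time slices to factor $W$ into simple bordisms. One small imprecision: the space of ordered embeddings of $k$ discs into a connected orientable surface is \emph{not} path-connected (each disc's orientation behavior is an isotopy invariant), but since you are free to choose the target embeddings into the $E_i$ with matching orientations, this does not affect the argument.
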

\begin{proof}
Let $C_1,\dotsc,C_k$ be the components of $\partial\overline A$.
Each $C_i$ is diffeomorphic to a circle, so by the smooth Schoenflies theorem, the complement of $C_i$ in
$(0,t)\times\R$ has a unique bounded component $D_i$, and $\overline{D_i}$ is diffeomorphic to a closed disc.
Then $Y \coloneqq \overline{D_1}\cup\dots\cup\overline{D_k}$ is the complement of $A$ in $[0,t]\times\R$, so $W$ is
contained in $Y$. We can now apply a diffeomorphism of $[0,t]\times\R$ fixing the boundary and such that
$\overline{D_i}$ lands in $((i-1)t/k, it/k)\times\R$, factoring $W$ as the composition of the simple bordisms
$W\cap\overline{D_i}$.
\end{proof}
Therefore we are done if we can show that for every simple bordism $W\colon \varnothing\to\varnothing$ in $\hC$,
its image in $\ghC$ is equal to a composition of $n$ circles for some $n\in\bZ$. Assume $W$ has $m$ path components;
$m = 0$ would not be simple, and if $m = 1$ we are already done. We induct on $m$. A general simple bordism $W$
with $m > 1$ is of the form
\begin{equation}
\begin{gathered}
\begin{tikzpicture}
	\draw (0, 0) circle (1.5);
	\draw (0, 0) circle (1);
	\draw[thick, dashed, gray, fill=\lgray] (-1, 0) circle (0.3);
	\node at (-1, 0) {\stuff};
\end{tikzpicture}
\end{gathered}
\end{equation}
where the \textstuff{} indicates more components of $W$ that may be present. We use Exercise~\ref{exerc:5.3.1} to
simplify this picture. Recall that Exercise~\ref{exerc:5.3.1} tells us for any object $y$ of $\hC$ admitting a map
from $x\coloneqq \varnothing$ and any morphisms $w_1,w_2\colon y\to x$ and $w_3,w_4\colon x\to y$ in $\hC$, that in
$\Aut_{\ghC}(y)$,
\begin{equation}
\label{w1w2w3w4}
	w_1\circ w_3\circ (w_2\circ w_3)^{-1} = w_1\circ w_4\circ (w_2\circ w_4)^{-1}.
\end{equation}
Apply this to $y \coloneqq \pt^{\amalg 4}$ and the four morphisms
\begin{equation}
w_1 = 
	\wg{\begin{tikzpicture}
		\begin{scope}
			\clip (-1, -1) rectangle (0, 1);
			\draw (0, 0) circle (0.75);
			\draw (0, 0) circle (0.4);
			\draw[thick, dashed, gray, fill=\lgray] (-0.4, 0) circle (0.225);
			\node at (-0.4, 0) {\stuff};
		\end{scope}
		\draw (0, -1) -- (0, 1);
	\end{tikzpicture}},\quad
w_2 = \wg{\tworightsemicircles{2}},\quad
w_3 = \wg{\leftmacaroni{2}},\quad
w_4 = \wg{\twoleftsemicircles{2}}.
\end{equation}
Then~\eqref{w1w2w3w4} tells us
\begin{equation}
	\underbracket{\wg{\begin{tikzpicture}
		\draw (0, 0) circle (0.75);
		\draw (0, 0) circle (0.4);
		\draw[thick, dashed, gray, fill=\lgray] (-0.4, 0) circle (0.225);
		\node at (-0.4, 0) {\stuff};
	\end{tikzpicture}}\circ \wg{\singledisc{2}}^{-1}}_{w_1\circ w_3\circ (w_2\circ w_3)^{-1}} =
	\underbracket{
		\wg{\begin{tikzpicture}
			\draw (0, 0) circle (0.6);
			\draw[thick, dashed, gray, fill=\lgray] (0.5, 0) circle (0.35);
			\node at (0.5, 0) {\stuff};
		\end{tikzpicture}}\circ \wg{\twodiscs{2}}^{-1}
	}_{w_1\circ w_4\circ (w_2\circ w_4)^{-1}}.
\end{equation}
Precomposing with a single circle,
\begin{equation}
\wg{\begin{tikzpicture}
		\draw (0, 0) circle (0.75);
		\draw (0, 0) circle (0.4);
		\draw[thick, dashed, gray, fill=\lgray] (-0.4, 0) circle (0.225);
		\node at (-0.4, 0) {\stuff};
	\end{tikzpicture}}
	= 	\underbracket{
		\wg{\begin{tikzpicture}
			\draw (0, 0) circle (0.6);
			\draw[thick, dashed, gray, fill=\lgray] (0.5, 0) circle (0.35);
			\node at (0.5, 0) {\stuff};
		\end{tikzpicture}}}_{w_1\circ w_4} \circ \wg{\singledisc{2}}^{-1}.
\end{equation}
The bordism $w_1\circ w_4$ has fewer path components than $W$. It is not necessarily true that $w_1\circ w_4$ is
simple, but by Lemma~\ref{simplecomp} it is a composition of simple bordisms, and each has fewer path components
than $W$, so we induct and conclude that every simple bordism can be written as a union of circles and their
inverses.

\subsection{Solution to Exercise~\ref{rigidsymm}(e)}
\label{sec:5-3-1-e}

\renewcommand{\coloneqq}{=}
\renewcommand{\subseteq}{\subset}

The goal of this exercise is to show that the monoid $h\mathcal{C}_d^V(\emptyset,\emptyset)$ is \emph{free commutative} when $\mathrm{dim}(V) > 0$. The fact that it is commutative (which was Exercise 5.3.1(d)) follows from Corollary \ref{coro:model} below, and the fact that it is \emph{free} commutative is Corollary \ref{coro:freely-generating}. Remark \ref{rmk:tangential} describes the corresponding statements when manifolds are equipped with tangential structures and Remark \ref{rmk:Em-algebras} comments on whether the space $\mathcal{C}_d^V(\emptyset,\emptyset)$ is free as an $E_m$-algebra.

\subsubsection*{Description of the monoid.}

Let us write $U = \bR \times V$ and define $m = \mathrm{dim}(U) = \mathrm{dim}(V) + 1$. Note that, unwinding the definitions, we may see that $h\mathcal{C}_d^V(\emptyset,\emptyset)$ is isomorphic to the monoid
\begin{equation*}
\begin{split}
\mathcal{M}_d^V = \left\lbrace (W,\varphi) \Biggm| \begin{matrix}
W \text{ is a smooth, closed $d$-manifold} \\
\varphi \text{ is a smooth embedding } W \hookrightarrow U
\end{matrix} \right\rbrace / {\sim},
\end{split}
\end{equation*}
where $(W,\varphi) \sim (W',\varphi')$ if and only if
\[
[\varphi' \circ \theta] = [\varphi] \in \pi_0 \left( \mathrm{Emb}(W,U)/\mathrm{Diff}(W) \right)
\]
for a diffeomorphism $\theta \colon W \cong W'$, and the operation in $\mathcal{M}_d^V$ is given by
\begin{equation}
\label{eq:operation}
[(W_1,\varphi_1)] \cdot [(W_2,\varphi_2)] = [(W_1 \sqcup W_2 , i \circ (\varphi_1 \sqcup \varphi_2))],
\end{equation}
where $i = j \times \mathrm{id}_V$ for any orientation-preserving embedding $j \colon \bR \sqcup \bR \hookrightarrow \bR$. In other words, $\mathcal{M}_d^V$ is the monoid of isotopy classes of (unparametrized) closed, $d$-dimensional submanifolds of $U$, under the operation of ``placing two submanifolds side-by-side'' in $U$.

\subsubsection*{Embedding and diffeomorphism results.}

For the solution, we will need the following facts. For smooth manifolds $M,N$, recall that $\mathrm{Emb}(M,N)$ denotes the space of smooth embeddings of $M$ into $N$ and $\mathrm{Diff}(N)$ the topological group of self-diffeomorphisms of $N$, both equipped with the smooth Whitney topology. We also write $\mathrm{Diff}_c(N)$ for the subgroup of $\mathrm{Diff}(N)$ of those diffeomorphisms $\varphi$ such that $\{ x \in N \mid \varphi(x) \neq x \} \subset K$ for some compact $K \subset N$ (topologized as a colimit over larger and larger compact $K \subset N$) and $\mathrm{Diff}_c(N)_1$ for the path-component of the identity in $\mathrm{Diff}_c(N)$.
\begin{proposition}[\emph{Isotopy extension theorem}]
\label{prop:isotopy-extension}
Let $M,N$ be smooth manifolds without boundary, where $M$ is compact and $\mathrm{dim}(M) < \mathrm{dim}(N)$, and let $i \colon M \hookrightarrow N$ be a smooth embedding. Then the map
\begin{equation}
\label{eq:restriction}
\mathrm{Diff}_c(N)_1 \longrightarrow \mathrm{Emb}(M,N)/\mathrm{Diff}(M),
\end{equation}
defined by $\Phi \mapsto [\Phi \circ i]$, is a Serre fibration, and thus in particular admits path-lifting.
\end{proposition}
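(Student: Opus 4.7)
The plan is to verify directly the homotopy lifting property for $(D^n, D^n \times \{0\})$-pairs, using the classical technique of integrating parametrized time-dependent vector fields to produce ambient isotopies. The assumption $\dim(M) < \dim(N)$ is used to guarantee a tubular neighborhood of positive codimension around any embedding, and compactness of $M$ is used to get uniform control on its size.

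First I would reduce to a lifting problem in the embedding space. The action of $\mathrm{Diff}(M)$ on $\mathrm{Emb}(M,N)$ is free (for $M$ compact and $\dim(M) < \dim(N)$), and a classical result (Cerf, Lima, Palais) gives that the quotient map $\mathrm{Emb}(M,N) \to \mathrm{Emb}(M,N)/\mathrm{Diff}(M)$ is itself a principal $\mathrm{Diff}(M)$-bundle, in particular a Serre fibration. Hence, given a commutative square testing the fibration property of~\eqref{eq:restriction}, its bottom edge lifts to a continuous family of embeddings $i_{s,t}: M \hookrightarrow N$ parametrized by $(s,t) \in D^n \times [0,1]$ with $i_{s,0} = f(s) \circ i$. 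The task is now to produce $\Phi_{s,t} \in \mathrm{Diff}_c(N)_1$ with $\Phi_{s,0} = f(s)$ and $\Phi_{s,t} \circ i = i_{s,t}$.

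Second, I would construct $\Phi_{s,t}$ by parametrized flows. Along each $i_{s,t}(M)$ there is a canonical time-dependent vector field given by the derivative $X_{s,t}(i_{s,t}(x)) = \partial_t i_{s,t}(x)$. Choosing a tubular neighborhood $\nu$ of $i(M)$ in $N$ and pushing it forward via $f(s)$, the parametrized tubular neighborhood theorem produces a continuous family of tubular neighborhoods of the submanifolds $i_{s,t}(M)$ for $(s,t)$ in a neighborhood of $D^n \times \{0\}$. Using these, together with a smooth cutoff function, extend $X_{s,t}$ to a compactly supported smooth vector field $\tilde{X}_{s,t}$ on $N$ depending continuously on parameters. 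Integrating $\tilde X_{s,t}$ from time $0$ produces $\Psi_{s,t} \in \mathrm{Diff}_c(N)_1$ with $\Psi_{s,0} = \mathrm{id}$ and $\Psi_{s,t}(i_{s,0}(x)) = i_{s,t}(x)$, and I would set $\Phi_{s,t} = \Psi_{s,t} \circ f(s)$. Continuity in the Whitney topology follows from smooth dependence of flows on parameters for compactly supported vector fields.

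The main obstacle is globalizing this: the parametrized tubular neighborhoods constructed above exist only on a neighborhood of $t=0$, so one must either iterate the construction in short time intervals (using compactness of $[0,1]$ and a continuity estimate to guarantee uniform size) or else argue via a covering of $D^n \times [0,1]$ by opens on which a local lift exists and then patch. The patching requires some care because the ambient isotopies are not unique and one must concatenate them compatibly; however, all ambiguity lives in $\mathrm{Diff}_c(N)_1$ acting on itself, and because that group is locally contractible in the Whitney topology the obstructions vanish. Finally, path lifting is the special case $n=0$ of the Serre fibration property.
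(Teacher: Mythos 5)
Your overall skeleton is the same as the paper's: factor \eqref{eq:restriction} through $\mathrm{Emb}(M,N)$, use that the quotient $\mathrm{Emb}(M,N)\to\mathrm{Emb}(M,N)/\mathrm{Diff}(M)$ is a principal $\mathrm{Diff}(M)$-bundle to lift the bottom homotopy to a continuous family of embeddings $i_{s,t}$ with $i_{s,0}=f(s)\circ i$, and then realize that family by ambient compactly supported diffeomorphisms. The paper simply cites both ingredients: the principal-bundle property of the quotient (Michor; Binz--Fischer; Kriegl--Michor) and the local triviality of the orbit/restriction map $\Phi\mapsto\Phi\circ i$ from $\mathrm{Diff}_c(N)_1$ to $\mathrm{Emb}(M,N)$ (Palais, Cerf, Lima). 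Where you diverge is in trying to reprove the second ingredient by integrating vector fields, and this is where there is a genuine gap: the family $i_{s,t}$ produced by the Serre-fibration lifting property is only \emph{continuous} in $(s,t)$ --- that is all a Serre fibration test supplies --- so the ``canonical time-dependent vector field $X_{s,t}=\partial_t i_{s,t}$'' need not exist. The velocity-field-plus-flow method proves isotopy extension only for families that are differentiable in $t$ (and, for continuity of the resulting diffeomorphisms in the Whitney topology, in $s$ as well). To salvage your route you would either have to smooth the lifted family in the parameters rel $t=0$ while keeping it a family of embeddings compatible with the given bottom map, which is essentially as delicate as the statement itself, or drop differentiation in $t$ entirely and instead construct, for every embedding $j$ in a $C^1$-neighborhood of a fixed $j_0$ in the orbit, a diffeomorphism $\Psi_j\in\mathrm{Diff}_c(N)_1$ with $\Psi_j\circ j_0=j$ depending continuously on $j$ (fibrewise translation in a tubular neighborhood of $j_0(M)$, cut off near its boundary). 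The latter is precisely a continuous local section of the orbit map, i.e.\ Palais's local triviality theorem, and once it is in hand the Serre fibration property follows formally, with no short-time iteration needed.

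Relatedly, your closing patching step is not yet an argument: covering $D^n\times[0,1]$ by opens over which lifts exist and appealing to local contractibility of $\mathrm{Diff}_c(N)_1$ does not by itself allow you to splice the lifts compatibly; the standard way to organize exactly that bookkeeping is to establish local triviality (local sections) of $r_1$ and then invoke the general fact that locally trivial maps are Serre fibrations. A small point of attribution: the principal-bundle statement you use in your first step is the Michor/Binz--Fischer/Kriegl--Michor result, while Palais, Cerf and Lima are the references for the local triviality of the restriction map that your second step is attempting to replace.
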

\begin{proof}
We may factor \eqref{eq:restriction} into two maps
\[
r_1 \colon \mathrm{Diff}_c(N)_1 \longrightarrow \mathrm{Emb}(M,N) \qquad\text{and}\qquad r_2 \colon \mathrm{Emb}(M,N) \longrightarrow \mathrm{Emb}(M,N)/\mathrm{Diff}(M).
\]
The first map $r_1$ is a fibre bundle, and hence a Serre fibration, by combining Theorems A and B of \cite{Palais} (see also Corollaire 2 of \S II.2.2.2 of \cite{Cerf} and \cite{Lima}). The second map $r_2$ is a principal $\mathrm{Diff}(M)$-bundle, and hence in particular a Serre fibration, by Theorem 13.11 of \cite{Michor} (see also \cite{MR613004} and Theorem 44.1 of \cite{KrieglMichor}).
\end{proof}

\begin{proposition}[\emph{Multi-disc theorem}]
\label{prop:multi-disc-theorem}
Let $N$ be a smooth, connected, oriented manifold of dimension $m\geq 2$, let $i_1,\ldots,i_r \colon \bD^m \hookrightarrow N$ be a finite collection of orientation-preserving smooth embeddings of the closed $m$-disc into $N$ with pairwise disjoint images, and let $j_1,\ldots,j_r$ be a second such collection. Then there exists $\Phi \in \mathrm{Diff}_c(N)_1$ such that $\Phi \circ i_s = j_s$ for all $s \in \{ 1,\ldots,r \}$.
\end{proposition}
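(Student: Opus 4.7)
The plan is to prove the multi-disc theorem by induction on $r$, reducing to the classical ($r=1$) disc theorem of Palais, which I will assume as known. This single-disc theorem asserts precisely that on a connected oriented manifold of dimension at least $2$, any two orientation-preserving embeddings of $\bD^m$ are related by an ambient isotopy through compactly supported diffeomorphisms isotopic to the identity. Equivalently, evaluation $\Phi\mapsto \Phi\circ i$ from $\mathrm{Diff}_c(N)_1$ to the space of orientation-preserving embeddings of $\bD^m$ into $N$ is surjective.

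The inductive step goes as follows. Suppose the result is known for $r-1$ embeddings. Given $\{i_s\}_{s=1}^r$ and $\{j_s\}_{s=1}^r$, apply the induction hypothesis to the first $r-1$ embeddings in each family to obtain $\Psi\in\mathrm{Diff}_c(N)_1$ such that $\Psi\circ i_s = j_s$ for $s=1,\dots,r-1$. Replace each $i_s$ by $\Psi\circ i_s$; it now suffices to find $\Phi\in\mathrm{Diff}_c(N)_1$ which fixes the image $K\coloneqq\bigcup_{s=1}^{r-1} j_s(\bD^m)$ pointwise and satisfies $\Phi\circ i_r = j_r$, since then $\Phi\circ\Psi$ will be the required diffeomorphism.

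To produce $\Phi$, I would restrict attention to the open oriented submanifold $N'\coloneqq N\setminus K$. The two embeddings $i_r,j_r$ have images disjoint from $K$ by hypothesis, hence factor through orientation-preserving embeddings $\bD^m\hookrightarrow N'$. The key geometric input is that $N'$ is connected: since $K$ is a finite disjoint union of closed codimension-zero disks in a connected manifold of dimension $m\geq 2$, one can always go around any small neighborhood of $K$, and any two points of $N'$ may be joined by a path in $N$ that is then perturbed off $K$ using $\dim K \leq \dim N - 0$ together with $m\geq 2$ (take the path transverse to $\partial K$ and remove the compact arcs inside $K$ by rerouting around each sphere $i_s(\partial\bD^m)$, which is possible because $S^{m-1}$ is connected). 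Applying the base case of the disc theorem to the connected oriented $m$-manifold $N'$ produces $\Phi'\in\mathrm{Diff}_c(N')_1$ with $\Phi'\circ i_r = j_r$, and extending $\Phi'$ by the identity on $K$ (and extending its isotopy to the identity the same way) yields the required $\Phi\in\mathrm{Diff}_c(N)_1$, identically equal to the identity on $K$.

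The main obstacle I anticipate is the connectedness of $N'=N\setminus K$. This is elementary for $m\geq 2$, but it is precisely where the dimension hypothesis is used, so it deserves careful treatment. Everything else is a bookkeeping argument: the induction is driven by the classical disc theorem, and the extension by the identity is straightforward because $\Phi'$ is compactly supported away from $K$, which also ensures that its isotopy to the identity (chosen compactly supported in $N'$) extends to a compactly supported isotopy in $N$.
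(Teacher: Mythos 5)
Your argument is correct, but it is organized differently from the paper's proof. The paper handles all $r$ discs simultaneously: after arranging (WLOG) that the $i$-discs are disjoint from the $j$-discs, it chooses for each $s$ a path $\gamma_s$ from $i_s(0)$ to $j_s(0)$ avoiding all the other discs, arranges the paths to be pairwise disjoint (this is where $m\geq 2$ enters, with a footnoted extra step for $m=2$), encloses $i_s(\bD^m)\cup\gamma_s([0,1])\cup j_s(\bD^m)$ in pairwise disjoint connected open sets $A_s$, applies Palais' disc theorem inside each $A_s$, and glues the resulting compactly supported diffeomorphisms. You instead induct on $r$: after matching the first $r-1$ discs by the inductive hypothesis, you apply Palais' theorem to the last pair inside the open manifold $N'=N\setminus K$, where $K$ is the union of the already-matched closed discs, and extend by the identity. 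Both routes rest on the same single-disc input, and the dimension hypothesis enters in comparable ways: for the paper it guarantees disjoint connecting arcs, for you it guarantees that $N\setminus K$ is connected (your rerouting sketch works; a slightly cleaner argument is that every component of $N\setminus K$ must accumulate on $\partial K$, hence contains the connected outer collar $S^{m-1}\times(0,1)$ of one of the boundary spheres, and the discs can be removed one at a time). Your approach avoids the mild $m=2$ fuss about disjoint arcs and does not need the preliminary reduction making the $i$- and $j$-discs disjoint, at the cost of the connectivity-of-complement lemma and of checking that the Palais isotopy in $\mathrm{Diff}_c(N')_1$ is supported in a single compact subset of $N'$ so that it extends by the identity — a routine point (the image of a path in $\mathrm{Diff}_c(N')$ lies in the diffeomorphisms supported in one compact set), but worth stating explicitly. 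The paper's simultaneous construction has the small additional virtue of exhibiting $\Phi$ as supported in a disjoint union of prescribed neighborhoods of the discs and arcs, whereas yours produces a composite $\Phi\circ\Psi$ with less explicit support.
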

\begin{proof}
When $r=1$ this is exactly the \emph{Disc theorem} of Palais \cite[Theorem~B and Corollary~1]{Palais-extending}. We show how this implies the result for general $r$. First, note that we may assume, without loss of generality, that the images of $i_1,\ldots,i_r$ are also disjoint from the images of the $j_1,\ldots,j_r$.

For each $s \in \{ 1,\ldots,r \}$ we may choose a smooth path $\gamma_s$ in $N$ that is disjoint from $i_t(\bD^m)$ and $j_t(\bD^m)$ for all $t \neq s$ and such that $\gamma_s(0) = i_s(0)$ and $\gamma_s(1) = j_s(0)$. We may also assume that the paths $\gamma_1,\ldots,\gamma_r$ have \emph{pairwise disjoint} images.\footnote{For $m\geq 3$ it suffices to ensure that the $\gamma_1,\ldots,\gamma_r$ are pairwise \emph{transverse}. For $m=2$, some extra care is needed: after ensuring that these smooth paths are pairwise transverse and therefore intersect in finitely many points, we apply finitely many isotopies to remove these intersection points one at a time by ``pushing them off the end of the arc''.} We may then choose pairwise disjoint, connected, open subsets $A_1,\ldots,A_r$ of $N$ such that
\[
i_s(\bD^m) \cup \gamma_s([0,1]) \cup j_s(\bD^m) \subseteq A_s
\]
for $s \in \{ 1,\ldots,r \}$. By the Disc theorem of Palais, we may find $\Phi_s \in \mathrm{Diff}_c(A_s)_1$ such that $\Phi_s \circ i_s = j_s$ for all $s \in \{ 1,\ldots,r \}$. We may then define $\Phi$ to be $\Phi_s$ on each $A_s$ and the identity elsewhere.
\end{proof}


\subsubsection*{Another description of the monoid.}

The next two lemmas give a more convenient description of $\mathcal{M}_d^V$.

\begin{lemma}
\label{lem:equivalence}
An alternative description of the equivalence relation in the definition of $\mathcal{M}_d^V$ is $(W,\varphi) \sim (W',\varphi')$ if and only if there is a diffeomorphism $\Phi \in \mathrm{Diff}_c(U)_1$ such that $\Phi(\varphi(W)) = \varphi'(W')$.
\end{lemma}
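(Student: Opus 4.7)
The plan is to prove the two implications separately, using Proposition~\ref{prop:isotopy-extension} for one direction and the definition of $\pi_0$ for the other.

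For the forward direction, suppose $(W,\varphi) \sim (W',\varphi')$ under the original definition, so there is a diffeomorphism $\theta \colon W \to W'$ with $[\varphi' \circ \theta]$ and $[\varphi]$ in the same path component of $\mathrm{Emb}(W,U)/\mathrm{Diff}(W)$. Choose a path in this quotient from $[\varphi]$ to $[\varphi' \circ \theta]$. Since $W$ is closed and $\dim(W) = d < m = \dim(U)$ (the case $\dim(V) > 0$ ensures the disc/isotopy machinery is available), Proposition~\ref{prop:isotopy-extension} says that the map $\mathrm{Diff}_c(U)_1 \to \mathrm{Emb}(W,U)/\mathrm{Diff}(W)$ sending $\Phi \mapsto [\Phi \circ \varphi]$ is a Serre fibration, so we may lift the path to a path in $\mathrm{Diff}_c(U)_1$ starting at the identity and ending at some $\Phi$. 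By construction $[\Phi \circ \varphi] = [\varphi' \circ \theta]$ in the quotient, i.e.\ there exists $\psi \in \mathrm{Diff}(W)$ with $\Phi \circ \varphi = \varphi' \circ \theta \circ \psi$. Taking images yields $\Phi(\varphi(W)) = \varphi'(\theta(\psi(W))) = \varphi'(W')$, as desired.

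For the reverse direction, suppose there is $\Phi \in \mathrm{Diff}_c(U)_1$ with $\Phi(\varphi(W)) = \varphi'(W')$. Since $\varphi'$ is an embedding, it is a diffeomorphism onto its image, and we may define
\begin{equation*}
\theta \;=\; (\varphi')^{-1} \circ \Phi \circ \varphi \colon W \longrightarrow W',
\end{equation*}
which is a diffeomorphism (as composition of a diffeomorphism onto $\varphi(W)$ followed by $\Phi$ restricted to $\varphi(W)$ followed by $(\varphi')^{-1}$). By construction $\Phi \circ \varphi = \varphi' \circ \theta$ as embeddings $W \hookrightarrow U$, so in particular they represent the same class in $\mathrm{Emb}(W,U)/\mathrm{Diff}(W)$. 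Now choose any path $\Phi_t$ in $\mathrm{Diff}_c(U)_1$ from $\Phi_0 = \mathrm{id}$ to $\Phi_1 = \Phi$; projecting through the continuous map $\Phi \mapsto [\Phi \circ \varphi]$ gives a path in $\mathrm{Emb}(W,U)/\mathrm{Diff}(W)$ from $[\varphi]$ to $[\Phi \circ \varphi] = [\varphi' \circ \theta]$. Hence $[\varphi' \circ \theta] = [\varphi]$ in $\pi_0(\mathrm{Emb}(W,U)/\mathrm{Diff}(W))$, which is the original equivalence relation.

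The only step requiring real input is the forward direction, where the existence of an ambient diffeomorphism (rather than just a family of embeddings) is extracted from the isotopy extension theorem; the reverse direction is essentially bookkeeping. The hypothesis $\dim(V) > 0$ is used implicitly to guarantee $\dim(W) < \dim(U)$ so that Proposition~\ref{prop:isotopy-extension} applies.
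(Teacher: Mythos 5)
Your proof is correct and follows essentially the same route as the paper: one direction defines $\theta = (\varphi')^{-1}\circ\Phi\circ\varphi$ and pushes a path $\mathrm{id}\rightsquigarrow\Phi$ through the map $\Phi\mapsto[\Phi\circ\varphi]$, and the other lifts a path in $\mathrm{Emb}(W,U)/\mathrm{Diff}(W)$ through the Serre fibration of Proposition~\ref{prop:isotopy-extension} to obtain the ambient $\Phi\in\mathrm{Diff}_c(U)_1$. The only difference is cosmetic (you spell out the precomposing $\psi\in\mathrm{Diff}(W)$ that the paper leaves implicit), so there is nothing to add.
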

\begin{proof}
Suppose first that there is a diffeomorphism $\Phi \in \mathrm{Diff}_c(U)_1$ such that $\Phi(\varphi(W)) = \varphi'(W')$. We obtain a diffeomorphism $\theta \colon W \cong W'$ by setting $\theta = (\varphi')^{-1} \circ \Phi \circ \varphi$. Choose a path $\mathrm{id} \rightsquigarrow \Phi$ in $\mathrm{Diff}_c(U)_1$ and consider its composition with \eqref{eq:restriction}, where we set $N=U$, $M=W$ and $i = \varphi$. This is a path in $\mathrm{Emb}(W,U)/\mathrm{Diff}(W)$ from $[\varphi]$ to $[\Phi \circ \varphi] = [\varphi' \circ \theta]$. Hence $(W,\varphi) \sim (W',\varphi')$.

Conversely, suppose that $(W,\varphi) \sim (W',\varphi')$, so we have a diffeomorphism $\theta \colon W \cong W'$ and a path $[\varphi] \rightsquigarrow [\varphi' \circ \theta]$ in $\mathrm{Emb}(W,U)/\mathrm{Diff}(W)$. By Proposition \ref{prop:isotopy-extension}, we may lift this to a path $\mathrm{id} \rightsquigarrow \Phi$ such that $[\Phi \circ \varphi] = [\varphi' \circ \theta]$, and hence $\Phi(\varphi(W)) = \varphi'(\theta(W)) = \varphi'(W')$.
\end{proof}

\begin{lemma}
\label{lem:operation}
Choose any smooth, orientation-preserving embedding $e \colon U \sqcup U \hookrightarrow U$. Then the operation in $\mathcal{M}_d^V$ may be defined using this embedding:
\begin{equation}
\label{eq:operation2}
[(W_1,\varphi_1)] \cdot [(W_2,\varphi_2)] = [(W_1 \sqcup W_2 , e \circ (\varphi_1 \sqcup \varphi_2))].
\end{equation}
Moreover, if $[(W_1,\varphi_1)],\ldots,[(W_r,\varphi_r)]$ is any finite collection of elements of $\mathcal{M}_d^V$ and $e$ is a smooth, orientation-preserving embedding of $r$ disjoint copies of $U$ into $U$, then we have
\begin{equation}
\label{eq:operation3}
[(W_1,\varphi_1)] \cdots [(W_r,\varphi_r)] = [(W_1 \sqcup \cdots \sqcup W_r , e \circ (\varphi_1 \sqcup \cdots \sqcup \varphi_r))].
\end{equation}
\end{lemma}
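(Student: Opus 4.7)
The plan is to deduce both parts of the lemma from Lemma~\ref{lem:equivalence} and Proposition~\ref{prop:multi-disc-theorem}. Write $i \colon U \sqcup U \hookrightarrow U$ for the specific embedding $j \times \mathrm{id}_V$ used in~\eqref{eq:operation}, and let $i_s, e_s \colon U \hookrightarrow U$ denote the restrictions of $i$ and $e$ to the $s$-th summand. By Lemma~\ref{lem:equivalence}, in order to prove~\eqref{eq:operation2} it is enough to exhibit a diffeomorphism $\Phi \in \mathrm{Diff}_c(U)_1$ satisfying $\Phi(i_s(\varphi_s(W_s))) = e_s(\varphi_s(W_s))$ as subsets of $U$ for $s = 1, 2$.

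To produce such a $\Phi$, I would first choose, for each $s$, a smooth orientation-preserving embedding $\iota_s \colon \bD^m \hookrightarrow U$ whose image is a closed $m$-disc containing the compact set $\varphi_s(W_s)$ in its interior; this is possible because $U \cong \bR^m$ is connected. The two finite collections $\{i_s \circ \iota_s\}_{s=1,2}$ and $\{e_s \circ \iota_s\}_{s=1,2}$ then consist of orientation-preserving disc embeddings $\bD^m \hookrightarrow U$ with pairwise disjoint images (inherited from the disjointness of the two summand images of $i$ and of $e$). Because $\dim(V) > 0$ forces $m = \dim(U) \geq 2$, Proposition~\ref{prop:multi-disc-theorem} applies and yields $\Phi \in \mathrm{Diff}_c(U)_1$ with $\Phi \circ i_s \circ \iota_s = e_s \circ \iota_s$ for $s=1,2$; restricting this equation to the preimages of $\varphi_s(W_s) \subset \iota_s(\bD^m)$ gives exactly what is needed.

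For~\eqref{eq:operation3} the strategy is identical, applied in one step to all $r$ factors. Using~\eqref{eq:operation2} together with associativity, any fixed orientation-preserving embedding $i \colon \sqcup_r U \hookrightarrow U$ computes the iterated binary product $[(W_1,\varphi_1)] \cdots [(W_r,\varphi_r)]$, and Proposition~\ref{prop:multi-disc-theorem} applied to the $r$-tuples $\{i_s \circ \iota_s\}_{s=1}^r$ and $\{e_s \circ \iota_s\}_{s=1}^r$ furnishes a single $\Phi \in \mathrm{Diff}_c(U)_1$ handling all $r$ components at once. I expect no serious obstacle: the real work is already done by Proposition~\ref{prop:multi-disc-theorem}, and the only things to attend to are the elementary packaging of each $\varphi_s(W_s)$ into a closed $m$-disc inside $U$ and the observation that $m \geq 2$, both of which follow immediately from the standing hypotheses.
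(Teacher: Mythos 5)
Your argument is correct and follows essentially the same route as the paper's proof: reduce via Lemma~\ref{lem:equivalence} to producing a compactly supported diffeomorphism isotopic to the identity that matches the two embedded images, then obtain it from the multi-disc theorem (Proposition~\ref{prop:multi-disc-theorem}) after enclosing each compact $\varphi_s(W_s)$ in disjoint embedded $m$-discs, using $m\geq 2$. The only cosmetic difference is that you compose with chosen disc embeddings $\iota_s$ rather than shrinking the representatives into the unit disc and restricting $e,e'$ to it, which is an equivalent maneuver.
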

\begin{proof}
The operation in $\mathcal{M}_d^V$ was defined exactly as in equation \eqref{eq:operation2} (see equation \eqref{eq:operation}), \emph{assuming} that the embedding $e$ is of a particular form (namely of the form $(\bR \sqcup \bR \hookrightarrow \bR) \times \mathrm{id}_V$). It follows that equation \eqref{eq:operation3} also holds, again \emph{assuming} that the embedding $e$ is of a particular form.\footnote{For example, one choice of conventions would lead to $e$ being of the following form: let us identify $U$ with $(0,1) \times V$ and, on the $i$th copy of $U$ in the disjoint union, we define $e$ to be the embedding $U \hookrightarrow U$ that acts by $t \mapsto (t+2^i-2)/2^i$ on the first coordinate and by the identity on $V$.} It will therefore suffice to prove that right-hand side of \eqref{eq:operation3} is independent of the choice of $e$, in other words, that
\[
(W_1 \sqcup \cdots \sqcup W_r , e \circ (\varphi_1 \sqcup \cdots \sqcup \varphi_r)) \sim (W_1 \sqcup \cdots \sqcup W_r , e' \circ (\varphi_1 \sqcup \cdots \sqcup \varphi_r))
\]
for any two smooth, orientation-preserving embeddings $e,e'$ of $r$ disjoint copies of $U$ into $U$. By Lemma \ref{lem:equivalence} this means that we need to find $\Phi \in \mathrm{Diff}_c(U)_1$ such that:
\begin{equation}
\label{eq:condition}
\Phi(e(\varphi_1(W_1) \sqcup \cdots \sqcup \varphi_r(W_r))) = e'(\varphi_1(W_1) \sqcup \cdots \sqcup \varphi_r(W_r)).
\end{equation}
Since $\varphi_s(W_s)$ is a compact subspace of $U = \bR^m$, we may assume that it is contained in the closed unit disc $\bD^m$.\footnote{If it is not, we go back and choose a different representative for $[(W_s,\varphi_s)]$ by ``shrinking'' $\varphi_s$ by an isotopy.} Let us write $i_s$ for the restriction of $e$ to the closed unit disc in the $s$th copy of $U$ for $s \in \{1,\ldots,r\}$. Similarly, write $j_s$ for the restriction of $e'$ to the closed unit disc in the $s$th copy of $U$. We may therefore rewrite \eqref{eq:condition} as:
\begin{equation}
\label{eq:condition2}
\Phi(i_1(\varphi_1(W_1))) \cup \cdots \cup \Phi(i_r(\varphi_r(W_r))) = j_1(\varphi_1(W_1)) \cup \cdots \cup j_r(\varphi_r(W_r)).
\end{equation}
Now Proposition \ref{prop:multi-disc-theorem} tells us that there exists $\Phi \in \mathrm{Diff}_c(U)_1$ such that $\Phi \circ i_s = j_s$ for all $s \in \{1,\ldots,r\}$, which implies \eqref{eq:condition2}.
\end{proof}

\begin{corollary}
\label{coro:model}
The monoid $h\mathcal{C}_d^V(\emptyset,\emptyset) = \mathcal{M}_d^V$ is isomorphic to the monoid whose underlying set is the set of smooth, closed, $d$-dimensional submanifolds of $U$, up to the natural left-action of $\mathrm{Diff}_c(U)_1$, and whose operation is given by
\[
([W_1],[W_2]) \mapsto [e_1(W_1) \cup e_2(W_2)],
\]
where $(e_1,e_2)$ is any choice of a pair of orientation-preserving self-embeddings $U \hookrightarrow U$ with disjoint images. In particular, this operation is commutative. Moreover, if $e_1,\ldots,e_r$ is any collection of orientation-preserving self-embeddings $U \hookrightarrow U$ with pairwise disjoint images, we have
\begin{equation}
\label{eq:operation4}
[W_1] \cdots [W_r] = [e_1(W_1) \cup \cdots \cup e_r(W_r)]
\end{equation}
in this monoid.
\end{corollary}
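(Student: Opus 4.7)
The plan is to construct an explicit isomorphism from $\mathcal{M}_d^V$ (the concrete description of $h\mathcal{C}_d^V(\emptyset,\emptyset)$ obtained just above) to the candidate monoid $\mathcal{N}_d^V$ described in the statement, and then to read off commutativity and the multi-factor product formula from what has already been proved. Concretely, I would define a map $\Psi\colon \mathcal{M}_d^V \to \mathcal{N}_d^V$ by $\Psi([(W,\varphi)]) = [\varphi(W)]$, where the bracket on the right denotes the orbit of the submanifold $\varphi(W)\subset U$ under the action of $\mathrm{Diff}_c(U)_1$.

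First I would verify that $\Psi$ is well defined: if $(W,\varphi) \sim (W',\varphi')$, then Lemma~\ref{lem:equivalence} provides $\Phi \in \mathrm{Diff}_c(U)_1$ with $\Phi(\varphi(W)) = \varphi'(W')$, which is exactly the statement that $[\varphi(W)] = [\varphi'(W')]$ in $\mathcal{N}_d^V$. Conversely, given any smooth closed $d$-dimensional submanifold $X \subset U$, the pair $(X,\iota)$ with $\iota$ the inclusion satisfies $\Psi([(X,\iota)]) = [X]$, so $\Psi$ is surjective; injectivity follows from the same invocation of Lemma~\ref{lem:equivalence} read in the opposite direction. Hence $\Psi$ is a bijection of underlying sets.

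Next I would check that $\Psi$ is a homomorphism. Given classes $[(W_1,\varphi_1)]$ and $[(W_2,\varphi_2)]$, choose a pair $(e_1,e_2)$ of orientation-preserving self-embeddings $U \hookrightarrow U$ with disjoint images. Applying Lemma~\ref{lem:operation} with the composite embedding $e = e_1 \sqcup e_2$ of two disjoint copies of $U$ gives
\[
\Psi([(W_1,\varphi_1)] \cdot [(W_2,\varphi_2)]) = [e_1(\varphi_1(W_1)) \cup e_2(\varphi_2(W_2))],
\]
which is exactly the product $[\varphi_1(W_1)] \cdot [\varphi_2(W_2)]$ in $\mathcal{N}_d^V$ computed with $(e_1,e_2)$. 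This shows simultaneously that the operation on $\mathcal{N}_d^V$ is \emph{well defined} (independent of the choice of $(e_1,e_2)$, because it agrees with the intrinsic operation on $\mathcal{M}_d^V$) and that $\Psi$ is a monoid isomorphism.

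Commutativity of $\mathcal{N}_d^V$ is then essentially free: swapping the roles of $W_1$ and $W_2$ amounts to using the pair $(e_2,e_1)$ in place of $(e_1,e_2)$, and both pairs are allowed by Lemma~\ref{lem:operation}, so the resulting subset $e_1(W_1) \cup e_2(W_2) = e_2(W_2) \cup e_1(W_1)$ represents both products. Finally, the $r$-factor formula \eqref{eq:operation4} follows in the same manner by applying Lemma~\ref{lem:operation} to a pairwise-disjoint tuple $(e_1,\dots,e_r)$; no independent argument is needed. The only place where any real geometric input enters is commutativity via the multi-disc theorem (Proposition~\ref{prop:multi-disc-theorem}), which is why the hypothesis $\dim(V) > 0$, i.e.\ $m \geq 2$, is essential—this had already been absorbed into Lemma~\ref{lem:operation}, so the corollary itself is pure bookkeeping and has no serious obstacle.
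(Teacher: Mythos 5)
Your proposal is correct and follows essentially the same route as the paper: the bijection $[(W,\varphi)]\mapsto[\varphi(W)]$ furnished by Lemma~\ref{lem:equivalence}, with Lemma~\ref{lem:operation} supplying independence of the choice of embeddings, the homomorphism property, and the $r$-fold formula~\eqref{eq:operation4}. Your additional spelling out of injectivity, surjectivity, and commutativity (via the symmetry of the union and the allowed swap of $(e_1,e_2)$) is just a more explicit rendering of what the paper leaves implicit.
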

\begin{proof}
By Lemma \ref{lem:equivalence}, there is a bijection from $\mathcal{M}_d^V$ to the set of smooth, closed, $d$-dimensional submanifolds of $U$, up to the natural left-action of $\mathrm{Diff}_c(U)_1$, given by sending $[(W,\varphi)]$ to $[\varphi(W)]$. Using the description of the operation of $\mathcal{M}_d^V$ from Lemma \ref{lem:operation}, we see that it corresponds under this bijection to the operation $([W_1],[W_2]) \mapsto [e_1(W_1) \cup e_2(W_2)]$ described above, which is therefore in particular well-defined. Equation \eqref{eq:operation4} also follows directly from Lemma \ref{lem:operation}.
\end{proof}

\begin{remark}
Note that this in particular solves Exercise 5.3.1(d), since Corollary \ref{coro:model} implies that $\mathcal{C}_d^V(\emptyset,\emptyset)$ is commutative. For the rest of this solution, we identify $\mathcal{M}_d^V$ with the monoid described in Corollary \ref{coro:model}.
\end{remark}

\subsubsection*{A generating set.}

We now define a subset of $\mathcal{M}_d^V$ that will turn out to freely generate it.

\begin{definition}
\label{defn:inseparable}
Let us call a smooth, closed, $d$-dimensional submanifold $W$ of $U$ \emph{inseparable} if $W \neq \emptyset$ and, if for any finite collection of smooth embeddings $i_1,\ldots,i_r \colon U \hookrightarrow U$ with
\[
W \subseteq \bigcup_{s=1}^r i_s(U) \qquad\text{and}\qquad i_1(U),\ldots,i_r(U) \text{ are pairwise disjoint},
\]
we have that $W \subseteq i_s(U)$ for one $s \in \{1,\ldots,r\}$.
\end{definition}

\begin{remark}
\label{rmk:separability}
Some immediate consequences of Definition \ref{defn:inseparable} are as follows:
\begin{itemize}
\item If $W$ is path-connected, then it is inseparable.
\item Examples of inseparable but non-path-connected $W$ are (for $d=1,m=2$) two nested circles in $\bR^2$ and (for $d=1,m=3$) two non-trivially linked circles in $\bR^3$.
\item If $m=\mathrm{dim}(U) > 2d+1$ then inseparable $d$-submanifolds of $U=\bR^m$ are path-connected. To see this, let us denote the path-components of $W$ by $W_1,\ldots,W_r$ and choose any collection $i_1,\ldots,i_r$ of orientation-preserving smooth embeddings $U \hookrightarrow U$ with pairwise disjoint images. Let $i \colon W \hookrightarrow U$ be the embedding that acts by $i_s$ on $W_s$. Since the space of embeddings $\mathrm{Emb}(W,U)$ is path-connected for $\mathrm{dim}(U) > 2\mathrm{dim}(W)+1$, we may choose a path from the inclusion to $i$ and use Proposition \ref{prop:isotopy-extension} to lift this and find $\Phi \in \mathrm{Diff}_c(U)_1$ such that $\Phi|_W = i$. Hence $[i(W)] = [W]$, so by Lemma \ref{lem:inseparable-well-defined} below, $W$ is inseparable if and only if $i(W)$ is inseparable. But $i(W)$ is clearly inseparable if and only if $r=1$.
\end{itemize}
\end{remark}

\begin{lemma}
\label{lem:inseparable-well-defined}
Inseparability is a well-defined property of the element $[W] \in \mathcal{M}_d^V$. In other words, if $W \sim W'$ and $W$ is inseparable, then so is $W'$.
\end{lemma}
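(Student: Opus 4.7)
The plan is to unwind the two definitions and check that the inseparability condition is manifestly preserved under the action of $\mathrm{Diff}_c(U)_1$. By Lemma~\ref{lem:equivalence} (and the reformulation in Corollary~\ref{coro:model} that $\mathcal{M}_d^V$ is $\{$closed $d$-submanifolds of $U\}/\mathrm{Diff}_c(U)_1$), the relation $W \sim W'$ amounts to the existence of a $\Phi \in \mathrm{Diff}_c(U)_1$ with $\Phi(W) = W'$. So it suffices to show: if $W$ is inseparable and $\Phi \in \mathrm{Diff}_c(U)_1$, then $\Phi(W)$ is inseparable.

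First I would check the nonemptiness clause, which is trivial: $\Phi$ is a bijection, so $W \neq \emptyset$ implies $\Phi(W) \neq \emptyset$. Next, given any finite collection of smooth embeddings $i_1,\dots,i_r \colon U \hookrightarrow U$ with pairwise disjoint images covering $\Phi(W)$, I would simply pull back by $\Phi^{-1}$. Since $\Phi^{-1} \in \mathrm{Diff}_c(U)_1$ is in particular a smooth self-diffeomorphism of $U$, each composite $\Phi^{-1} \circ i_s \colon U \hookrightarrow U$ is again a smooth embedding; the images $(\Phi^{-1} \circ i_s)(U) = \Phi^{-1}(i_s(U))$ remain pairwise disjoint (diffeomorphisms preserve disjointness); and
\begin{equation*}
W \;=\; \Phi^{-1}(\Phi(W)) \;\subseteq\; \Phi^{-1}\!\left(\bigcup_{s=1}^r i_s(U)\right) \;=\; \bigcup_{s=1}^r (\Phi^{-1} \circ i_s)(U).
\end{equation*}
Applying the inseparability of $W$ to this collection yields an $s$ with $W \subseteq (\Phi^{-1} \circ i_s)(U)$, and applying $\Phi$ gives $\Phi(W) \subseteq i_s(U)$, as required.

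There is no real obstacle here; the argument is symmetric in $W$ and $W'$ because $\mathrm{Diff}_c(U)_1$ is a group, so inseparability descends to a well-defined property on the quotient $\mathcal{M}_d^V$. The only point worth noting, to keep the setup clean, is that we genuinely use Lemma~\ref{lem:equivalence} to replace the isotopy-theoretic definition of $\sim$ by an honest ambient diffeomorphism; without that, one would have to transport the covering embeddings $i_s$ along a path of embeddings rather than along a single $\Phi$, which is more awkward. With Lemma~\ref{lem:equivalence} in hand the proof is a one-line pullback argument.
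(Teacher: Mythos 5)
Your proof is correct and is essentially identical to the paper's: both use Corollary~\ref{coro:model} (via Lemma~\ref{lem:equivalence}) to realize $W \sim W'$ by a single $\Phi \in \mathrm{Diff}_c(U)_1$ and then pull back the covering embeddings $i_s$ to $\Phi^{-1}\circ i_s$, applying inseparability of $W$ and pushing forward again. The only cosmetic difference is that you also note the trivial nonemptiness clause, which the paper leaves implicit.
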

\begin{proof}
Suppose that $i_1,\ldots,i_r \colon U \hookrightarrow U$ are smooth embeddings with pairwise disjoint images and with $W' \subseteq i_1(U) \cup \cdots \cup i_r(U)$. Since $W \sim W'$, we have (by Corollary \ref{coro:model}) $\Phi \in \mathrm{Diff}_c(U)_1$ such that $\Phi(W) = W'$. If we define $\hat{\imath}_s = \Phi^{-1} \circ i_s$ for each $s$, then $\hat{\imath}_1,\ldots,\hat{\imath}_r \colon U \hookrightarrow U$ are smooth embeddings with pairwise disjoint images and with $W \subseteq \hat{\imath}_1(U) \cup \cdots \cup \hat{\imath}_r(U)$. Since $W$ is inseparable we have $W \subseteq \hat{\imath}_s(U)$ for some $s \in \{1,\ldots,r\}$, and hence $W' \subseteq i_s(U)$.
\end{proof}

\begin{definition}
Denote the set of inseparable elements of $\mathcal{M}_d^V$ by $\mathcal{I}_d^V$.\footnote{See Lemma \ref{lem:inseparable2} for an equivalent characterization of $\mathcal{I}_d^V$, where the $r$ of Definition \ref{defn:inseparable} is replaced by $2$.}
\end{definition}


The following is a consequence of Lemma \ref{lem:inseparable-well-defined} and the case $r=1$ of Corollary~\ref{coro:model}.

\begin{corollary}
\label{coro:self-embedding}
Let $e \colon U \hookrightarrow U$ be an orientation-preserving self-embedding and let $W$ be a smooth, closed, $d$-dimensional submanifold of $U$. Then $[e(W)] = [W]$ in $\mathcal{M}_d^V$. In particular, by Lemma \ref{lem:inseparable-well-defined}, $W$ is inseparable if and only if $e(W)$ is inseparable.\qed
\end{corollary}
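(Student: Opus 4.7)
The plan is to derive the corollary as an essentially immediate consequence of the two results cited in the preceding sentence, namely Corollary~\ref{coro:model} (with $r=1$) and Lemma~\ref{lem:inseparable-well-defined}. Since there is no disjointness constraint to enforce when only a single embedding is involved, very little setup is required.

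First, I would apply equation~\eqref{eq:operation4} of Corollary~\ref{coro:model} in the special case $r=1$, taking $W_1 = W$ and $e_1 = e$. In this degenerate case the pairwise disjointness hypothesis on the embeddings is vacuous, so $e$ may be an arbitrary orientation-preserving self-embedding $U \hookrightarrow U$. The equation then reads
\begin{equation*}
  [W] = [e(W)]
\end{equation*}
as elements of the monoid $\mathcal{M}_d^V$, which is the first claim of the corollary.

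Second, I would invoke Lemma~\ref{lem:inseparable-well-defined}, which asserts that inseparability is well defined on equivalence classes in $\mathcal{M}_d^V$. Combined with the equality $[W] = [e(W)]$ just established, this immediately yields that $W$ is inseparable if and only if $e(W)$ is inseparable, giving the second assertion.

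I do not anticipate any genuine obstacle here: the content of the corollary has already been absorbed into Corollary~\ref{coro:model} (whose proof in turn relied on the isotopy extension theorem and the multi-disc theorem) and Lemma~\ref{lem:inseparable-well-defined}. The only small point worth emphasizing explicitly in the write-up is that the disjointness hypothesis on the embeddings in Corollary~\ref{coro:model} is vacuous for $r=1$, so that \emph{any} orientation-preserving self-embedding of $U$ is permissible.
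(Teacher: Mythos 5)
Your proposal is correct and is exactly the paper's argument: the paper derives this corollary as an immediate consequence of the $r=1$ case of Corollary~\ref{coro:model} (giving $[W]=[e(W)]$) together with Lemma~\ref{lem:inseparable-well-defined}. Nothing further is needed.
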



Write $\bN[\mathcal{I}_d^V]$ for the free commutative monoid on the set $\mathcal{I}_d^V$. The inclusion $\mathcal{I}_d^V \hookrightarrow \mathcal{M}_d^V$ therefore induces a monoid homomorphism
\begin{equation}
\label{eq:monoid-hom}
\alpha \colon  \bN[\mathcal{I}_d^V] \longrightarrow \mathcal{M}_d^V.
\end{equation}
Our aim is to show that $\alpha$ is an isomorphism. (Surjectivity of $\alpha$ is the statement that $\mathcal{I}_d^V$ \emph{generates} $\mathcal{M}_d^V$ as a commutative monoid; injectivity is the statement that it \emph{freely} generates $\mathcal{M}_d^V$.)

\subsubsection*{Surjectivity and injectivity.}

\begin{proposition}
\label{prop:surjectivity}
The monoid homomorphism \eqref{eq:monoid-hom} is surjective.
\end{proposition}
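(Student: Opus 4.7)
The plan is to prove surjectivity by induction on the number of path components of a chosen smooth closed submanifold $W \subset U$ representing a given class $[W] \in \mathcal{M}_d^V$. Since $W$ is compact, the number of components is finite, so this is a well-founded induction.

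For the base cases, the empty manifold represents the identity of $\mathcal{M}_d^V$, which is the image of the empty product in $\bN[\mathcal{I}_d^V]$; and if $W$ has exactly one path component, then $W$ is path-connected, hence inseparable by the first bullet of Remark~\ref{rmk:separability} (any collection of pairwise disjoint open sets whose union contains a path-connected set must have one member containing the whole set), so $[W] \in \mathcal{I}_d^V$ is directly in the image of $\alpha$.

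For the inductive step, suppose $W$ has $k \geq 2$ components. If $W$ is inseparable, we are done. Otherwise, by Definition~\ref{defn:inseparable} there exist smooth embeddings $i_1, \dots, i_r \colon U \hookrightarrow U$ with pairwise disjoint images such that $W \subseteq \bigcup_s i_s(U)$ but $W \not\subseteq i_s(U)$ for any single $s$. Set $V_s \coloneqq i_s^{-1}(W \cap i_s(U))$, which is a smooth closed $d$-dimensional submanifold of $U$. Since the $i_s(U)$ are open and pairwise disjoint, every path component of $W$ lies entirely inside exactly one $i_s(U)$, so each $V_s$ is a (possibly empty) union of whole path components of $W$. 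The non-containment hypothesis forces at least two of the $V_s$ to be non-empty, and therefore each individual $V_s$ has strictly fewer than $k$ path components.

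To conclude, apply equation~\eqref{eq:operation4} of Corollary~\ref{coro:model} to the embeddings $i_1,\dots,i_r$ and the submanifolds $V_s$, obtaining $[W] = [V_1] \cdots [V_r]$ in $\mathcal{M}_d^V$. The inductive hypothesis gives each $[V_s] \in \operatorname{image}(\alpha)$, and since $\operatorname{image}(\alpha)$ is closed under products we conclude $[W] \in \operatorname{image}(\alpha)$ as well. The only substantive thing to verify is the strict drop in the component count, which is the point where the hypothesis ``$W \not\subseteq i_s(U)$ for any $s$'' is used; apart from that the argument is essentially formal, so I do not expect any serious obstacle.
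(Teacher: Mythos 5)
Your argument is correct and is essentially the paper's own proof: the same induction on the number of path components, the same base cases, the same decomposition of a separable $W$ into the pieces $V_s = i_s^{-1}(W)$ (each a union of whole path components, hence with strictly fewer components), and the same appeal to \eqref{eq:operation4} of Corollary~\ref{coro:model} to write $[W]=[V_1]\cdots[V_r]$. The only detail the paper adds, and which you should too, is the harmless reduction to the case where the separating embeddings $i_1,\dots,i_r$ are orientation-preserving (e.g.\ by precomposing any orientation-reversing one with an orientation-reversing diffeomorphism of $U$), since Corollary~\ref{coro:model} is stated only for orientation-preserving self-embeddings.
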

\begin{proof}
Let $[W] \in \mathcal{M}_d^V$. We show that $[W]$ is in the image of $\alpha$ by induction on the number $n(W)$ of path-components of $W$. If $W=\emptyset$ then $[W] = \alpha(\phantom{-})$. If $W$ is path-connected, then it must be inseparable, so $[W] = \alpha([W])$. So we may assume that $n(W)\geq 2$. If $[W]$ is inseparable, we are done, so let us assume that there are smooth embeddings $i_1,\ldots,i_r \colon U \hookrightarrow U$ with pairwise disjoint images ($r \geq 2$) such that
\[
W \subseteq \bigcup_{s=1}^r i_s(U) \qquad\text{and}\qquad W \cap i_s(U) \neq \emptyset \text{ for all } s \in \{1,\ldots,r\}.
\]
Without loss of generality, we may also assume that $i_1,\ldots,i_r$ are orientation-preserving. Since $\{ W \cap i_s(U) \mid s \in \{1,\ldots,r\} \}$ is a partition of $W$ into clopen subsets, each $W \cap i_s(U)$ is a union of path-components of $W$, and in particular compact. Hence 
\[
W'_s = i_s^{-1}(W)
\]
are smooth, closed, $d$-dimensional submanifolds of $U$, so they represent elements $[W'_s]$ of $\mathcal{M}_d^V$. Since $W'_s$ are all non-empty and $r \geq 2$ we deduce that $n(W'_s) < n(W)$ for all $s$. Thus, by the inductive hypothesis, each $[W'_s]$ is in the image of $\alpha$. By Corollary \ref{coro:model},
\begin{equation}
\label{eq:composition}
[W'_1] \cdots [W'_r] = [i_1(W'_1) \cup \cdots \cup i_r(W'_r)] = [W],
\end{equation}
so $[W]$ is also in the image of $\alpha$.
\end{proof}

\begin{lemma}
\label{lem:key}
Let $W_1,\ldots,W_r$ be a finite collection of inseparable manifolds embedded in $U$ and let $e_1,\ldots,e_r \colon U \hookrightarrow U$ be a collection of smooth, orientation-preserving embeddings $U \hookrightarrow U$ with pairwise disjoint images, and let $i_1,\ldots,i_s \colon U \hookrightarrow U$ be another such collection. Assume that
\[
W \coloneqq e_1(W_1) \cup \cdots \cup e_r(W_r) \subseteq i_1(U) \cup \cdots \cup i_s(U)
\]
and that $i_t^{-1}(W)$ is compact and inseparable for each $t \in \{1,\ldots,s\}$. Then $r=s$ and $i_t(U) \cap W = e_{\sigma(t)}(W_{\sigma(t)})$ for each $t \in \{1,\ldots,r\}$ and some permutation $\sigma$ of $\{1,\ldots,r\}$.
\end{lemma}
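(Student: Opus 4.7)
The plan is to show that the two partitions of $W$ induced by the $e_k(W_k)$'s and by the $i_t(U)\cap W$'s coincide. Setting $A_{k,t}:=e_k(W_k)\cap i_t(U)$, both partitions are simultaneously refined by the family $\{A_{k,t}\}$: one has $e_k(W_k)=\bigsqcup_t A_{k,t}$ and $i_t(U)\cap W=\bigsqcup_k A_{k,t}$, with each $A_{k,t}$ clopen in $W$ since the $e_k(U)$ and the $i_t(U)$ are pairwise disjoint open subsets of $U$. The conclusion of the lemma is equivalent to the statement that, for every nonempty $A_{k,t}$, one has $A_{k,t}=e_k(W_k)=i_t(U)\cap W$; combined with the fact that each $W_k$ and each $W''_t:=i_t^{-1}(W)$ is nonempty (built into the definition of inseparability), the desired bijection $\sigma$ and the equality $r=s$ then follow immediately.

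Thus it suffices to prove the following symmetric claim: for every fixed $k$, there is exactly one $t$ with $A_{k,t}\neq\emptyset$. (The analogous statement with $k$ and $t$ interchanged, using the inseparability of $W''_t$ instead of $W_k$, then yields the full conclusion.) Pulling the decomposition $e_k(W_k)=\bigsqcup_t A_{k,t}$ back via $e_k$ yields $W_k=\bigsqcup_t(W_k\cap V_{k,t})$, where $V_{k,t}:=e_k^{-1}(i_t(U))$ are pairwise disjoint open subsets of $U$. Suppose for contradiction that at least two pieces, corresponding to distinct $t_1\neq t_2$, are nonempty. The strategy is to construct pairwise disjoint smooth embeddings $f_t\colon U\hookrightarrow U$ with $f_t(U)\subseteq V_{k,t}$ whose images together cover $W_k$; the inseparability of $W_k$ will then force $W_k\subseteq f_{t_0}(U)\subseteq V_{k,t_0}$ for a single $t_0$, yielding the desired contradiction with $W_k\cap V_{k,t_1}$ and $W_k\cap V_{k,t_2}$ both being nonempty.

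The hard part will be constructing the embeddings $f_t$, since each $V_{k,t}$ is merely an open subset of $U$ and generally not diffeomorphic to $U$: for instance, an annulus in $\R^2$ does not contain any embedded copy of $\R^2$ enclosing a nontrivial loop, so one cannot in general enclose an arbitrary compact subset of a generic open set by an embedded $U$. However, we only need $f_t(U)$ to contain the compact set $K_t:=W_k\cap V_{k,t}$, not all of $V_{k,t}$. The key structural input should be the hypothesis on the $W''_t$: transporting $K_t$ via the diffeomorphism $e_k^{-1}\circ i_t\colon B_{t,k}\to V_{k,t}$ (where $B_{t,k}:=i_t^{-1}(e_k(U))$) places a copy of $K_t$ inside $W''_t\subset U$, and the inseparability of $W''_t$, applied to the decomposition $W''_t=\bigsqcup_{k'}(W''_t\cap B_{t,k'})$, should constrain the topology enough (via an inductive argument leveraging the simply connectedness of each $i_t(U)\cong\R^m$) that $K_t$ lands in a simply connected subregion of $V_{k,t}$. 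A standard smoothing/bump-function or tubular neighborhood argument should then carve out an embedded copy of $U$ around $K_t$ inside $V_{k,t}$, providing the required $f_t$ and completing the proof by contradiction.
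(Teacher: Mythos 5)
Your skeleton (show each $e_k(W_k)$ meets only one $i_t(U)$, and symmetrically each $i_t(U)\cap W$ meets only one $e_k(U)$, then read off $\sigma$) matches the paper's two-step structure, but the heart of your argument is missing. To apply inseparability of $W_k$ you pull the cover back through $e_k$, which forces you to build embeddings $f_t\colon U\hookrightarrow U$ with $f_t(U)\subseteq V_{k,t}=e_k^{-1}(i_t(U))$ whose images contain the compact pieces $W_k\cap V_{k,t}$. You correctly identify this as the hard part, and it is exactly the step you do not carry out: a compact subset of an open subset of $\R^m$ need not sit inside an embedded copy of $\R^m$ contained in that open set, and your proposed remedy (that inseparability of $W''_t$ together with simple connectivity of $i_t(U)$ should force $K_t$ into a ``simply connected subregion'', after which a tubular-neighborhood argument carves out an embedded $U$) is neither proved nor clearly true — simple connectivity of a region does not by itself yield an embedded $\R^m$ containing a prescribed compact set, and no induction implementing this is given. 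As written, the proof does not go through.

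The difficulty evaporates if you apply inseparability in the other coordinates, which is what the paper does. By Corollary~\ref{coro:self-embedding}, $e_k(W_k)$ is inseparable because $W_k$ is; since $e_k(W_k)\subseteq W\subseteq i_1(U)\cup\cdots\cup i_s(U)$ and the $i_t(U)$ are pairwise disjoint, Definition~\ref{defn:inseparable} applied directly to $e_k(W_k)$ with the embeddings $i_1,\ldots,i_s$ (no pullback needed, since the definition only asks for disjointly embedded copies of $U$ in $U$ covering the manifold) forces $e_k(W_k)\subseteq i_t(U)$ for a single $t$. Symmetrically, $i_t(U)\cap W=i_t(i_t^{-1}(W))$ is inseparable by Corollary~\ref{coro:self-embedding} and the hypothesis on $i_t^{-1}(W)$, and it is covered by the pairwise disjoint $e_1(U),\ldots,e_r(U)$, so it lies in a single $e_k(U)$ and hence equals a single $e_k(W_k)$. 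Nonemptiness of each $W_k$ and each $i_t^{-1}(W)$ (built into inseparability) then gives the bijection and $r=s$. So the statement is correct and your outline is compatible with it, but you need to replace the unconstructed $f_t$'s by this direct use of Corollary~\ref{coro:self-embedding}.
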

\begin{proof}
We will first prove that, for each $t \in \{1,\ldots,s\}$,
\begin{equation}
\label{eq:step1}
i_t(U) \cap W = \bigcup_{a \in A_t} e_a(W_a), \text{ for some } A_t \subseteq \{1,\ldots,r\}.
\end{equation}
Given this, one may easily see that $\{A_1,\ldots,A_s\}$ is a partition of $\{1,\ldots,r\}$ into non-empty subsets. We will then prove that, for each $t \in \{1,\ldots,s\}$,
\begin{equation}
\label{eq:step2}
\lvert A_t \rvert = 1,
\end{equation}
which will complete the proof.

Suppose for a contradiction that \eqref{eq:step1} is false. This implies that, for some $u \in \{1,\ldots,r\}$, the submanifold $e_u(W_u)$ is contained in the union $i_1(U) \cup \cdots \cup i_s(U)$, but it is not contained in any one of the $i_1(U),\ldots,i_s(U)$. Hence $e_u(W_u)$ is separable, and so, by Corollary \ref{coro:self-embedding}, $W_u$ is separable, which contradicts the hypotheses.

Suppose for a contradiction that \eqref{eq:step2} is false. This implies that, for some $t \in \{1,\ldots,s\}$, the submanifold $i_t(U) \cap W$ is contained in the union $e_1(U) \cup \cdots \cup e_r(U)$, but it is not contained in any one of the $e_1(U),\ldots,e_r(U)$. Hence $i_t(U) \cap W = i_t(i_t^{-1}(W))$ is separable, and so, by Corollary \ref{coro:self-embedding}, $i_t^{-1}(W)$ is separable, which contradicts the hypotheses.
\end{proof}

\begin{proposition}
\label{prop:injectivity}
The monoid homomorphism \eqref{eq:monoid-hom} is injective.
\end{proposition}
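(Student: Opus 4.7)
The plan is to reduce injectivity of $\alpha$ directly to Lemma~\ref{lem:key} and the trivial-action property of $\mathrm{Diff}_c(U)_1$ recorded in Corollary~\ref{coro:model}. Suppose we are given two elements of $\bN[\mathcal{I}_d^V]$ with the same image under $\alpha$. Writing them as formal sums of elements of $\mathcal{I}_d^V$, pick representative inseparable submanifolds $W_1,\ldots,W_r$ and $W'_1,\ldots,W'_s$, so that the equality in $\mathcal{M}_d^V$ reads $[W_1]\cdots[W_r]=[W'_1]\cdots[W'_s]$.

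First, I would choose two collections of orientation-preserving self-embeddings of $U$ with pairwise disjoint images, $e_1,\ldots,e_r$ and $e'_1,\ldots,e'_s$, so that by the product formula~\eqref{eq:operation4} of Corollary~\ref{coro:model} the above equality becomes
\[
[e_1(W_1)\cup\cdots\cup e_r(W_r)]=[e'_1(W'_1)\cup\cdots\cup e'_s(W'_s)].
\]
Then Corollary~\ref{coro:model} (via Lemma~\ref{lem:equivalence}) yields $\Phi\in\mathrm{Diff}_c(U)_1$ with
\[
\Phi\bigl(e_1(W_1)\cup\cdots\cup e_r(W_r)\bigr)=e'_1(W'_1)\cup\cdots\cup e'_s(W'_s)=:W.
\]

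Now I would apply Lemma~\ref{lem:key} to $W$, using the inseparable pieces $W_1,\ldots,W_r$ together with the orientation-preserving embeddings $\Phi\circ e_1,\ldots,\Phi\circ e_r$ (still with disjoint images since $\Phi$ is a bijection and orientation-preserving, being isotopic to the identity), and covering $W$ by the disjoint embeddings $e'_1,\ldots,e'_s$. The hypothesis that $(e'_t)^{-1}(W)$ is compact and inseparable is immediate: it equals $W'_t$, which lies in $\mathcal{I}_d^V$ by assumption. The lemma then gives $r=s$ and a permutation $\sigma$ of $\{1,\ldots,r\}$ such that $e'_t(W'_t)=\Phi(e_{\sigma(t)}(W_{\sigma(t)}))$ for every $t$.

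Finally, I would read off the equality $[W_{\sigma(t)}]=[W'_t]$ in $\mathcal{M}_d^V$ by stringing together three instances of $\mathrm{Diff}_c(U)_1$-invariance: namely $[W_{\sigma(t)}]=[e_{\sigma(t)}(W_{\sigma(t)})]$ by Corollary~\ref{coro:self-embedding}, $[e_{\sigma(t)}(W_{\sigma(t)})]=[\Phi(e_{\sigma(t)}(W_{\sigma(t)}))]$ by Lemma~\ref{lem:equivalence}, and $[e'_t(W'_t)]=[W'_t]$ again by Corollary~\ref{coro:self-embedding}. Since these identifications land in $\mathcal{I}_d^V$, they give the required equality of formal sums in $\bN[\mathcal{I}_d^V]$, establishing injectivity. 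The only non-routine ingredient is the combinatorial bookkeeping in Lemma~\ref{lem:key}, which has already been proved; everything else is formal manipulation using the invariance statements built up above.
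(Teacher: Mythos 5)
Your argument is correct and follows essentially the same route as the paper's proof: push one side of the equality through a diffeomorphism $\Phi\in\mathrm{Diff}_c(U)_1$ supplied by Corollary~\ref{coro:model}, apply Lemma~\ref{lem:key} with the resulting disjoint orientation-preserving embeddings, and translate the matching of pieces back into equalities in $\mathcal{M}_d^V$ using Corollary~\ref{coro:self-embedding} and the $\mathrm{Diff}_c(U)_1$-invariance. The only immaterial differences are that the paper applies $\Phi$ to the other side of the equation and reuses a single collection $e_1,\ldots,e_{\max(r,s)}$ of self-embeddings for both products, whereas you use two collections.
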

\begin{proof}
Let $W_1,\ldots,W_r$ and $X_1,\ldots,X_s$ be inseparable manifolds embedded in $U$. Suppose that $[W_1] \cdots [W_r] = [X_1] \cdots [X_s]$ in $\mathcal{M}_d^V$. We then need to show that $r=s$ and $[X_t] = [W_{\sigma(t)}]$ for all $t \in \{1,\ldots,r\}$ and some permutation $\sigma$ of $\{1,\ldots,r\}$.

Let $e_1,\ldots,e_{\mathrm{max}(r,s)} \colon U \hookrightarrow U$ be any collection of orientation-preserving embeddings with pairwise disjoint images. Then, by Corollary \ref{coro:model}, we have some $\Phi \in \mathrm{Diff}_c(U)_1$ such that
\[
\Phi(e_1(X_1) \cup \cdots \cup e_s(X_s)) = e_1(W_1) \cup \cdots \cup e_r(W_r) \eqqcolon W.
\]
For each $t \in \{1,\ldots,s\}$, we have $(\Phi \circ e_t)^{-1}(W) = X_t$, which is a compact and inseparable submanifold of $U$, so Lemma \ref{lem:key} implies that $r=s$ and
\[
\Phi(e_t(U)) \cap W = e_{\sigma(t)}(W_{\sigma(t)})
\]
for each $t \in \{1,\ldots,r\}$ and some permutation $\sigma$ of $\{1,\ldots,r\}$. But $\Phi(e_t(U)) \cap W = \Phi(e_t(X_t))$, so by Corollaries \ref{coro:model} and \ref{coro:self-embedding} we have
\[
[X_t] = [e_t(X_t)] = [e_{\sigma(t)}(W_{\sigma(t)})] = [W_{\sigma(t)}].\qedhere
\]
\end{proof}

Propositions \ref{prop:surjectivity} and \ref{prop:injectivity} immediately imply:

\begin{corollary}
\label{coro:freely-generating}
The commutative monoid $\mathcal{M}_d^V$ is freely generated by the subset $\mathcal{I}_d^V$.
\end{corollary}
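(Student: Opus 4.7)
The statement we wish to establish is that $\mathcal{M}_d^V$, viewed as a commutative monoid under the operation described in Corollary~\ref{coro:model}, is free on the subset $\mathcal{I}_d^V \subset \mathcal{M}_d^V$ of inseparable classes. By the universal property of the free commutative monoid, this is equivalent to showing that the canonical homomorphism
\begin{equation*}
  \alpha \colon \bN[\mathcal{I}_d^V] \longrightarrow \mathcal{M}_d^V
\end{equation*}
induced by the inclusion $\mathcal{I}_d^V \hookrightarrow \mathcal{M}_d^V$ is a bijection. The plan is simply to combine the two preceding propositions, which were tailored precisely to yield the two halves of this bijection.

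First I would invoke Proposition~\ref{prop:surjectivity}, which by induction on the number of path components decomposes an arbitrary class $[W] \in \mathcal{M}_d^V$ as a finite product of classes of inseparable submanifolds, exhibiting $[W]$ in the image of $\alpha$. Then I would invoke Proposition~\ref{prop:injectivity}, which shows that if $[W_1] \cdots [W_r] = [X_1] \cdots [X_s]$ with all $W_i$ and $X_j$ inseparable, then $r = s$ and the multisets $\{[W_i]\}$ and $\{[X_j]\}$ agree up to permutation; this is exactly injectivity of $\alpha$ as a map out of the free commutative monoid.

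Neither step requires further geometric input beyond what has already been set up: the hard geometric work (the isotopy extension theorem, the multi-disc theorem, and the extraction of inseparable pieces via the embeddings $i_t^{-1}$) was absorbed into the two propositions. The only remaining obstacle is therefore purely formal bookkeeping, namely matching the conclusion ``$\alpha$ is an isomorphism'' with the definition of ``freely generated by $\mathcal{I}_d^V$''; this is immediate from the universal property, so in the finished write-up the proof reduces to a single line citing both propositions together with Corollary~\ref{coro:model} (which legitimises viewing $\mathcal{M}_d^V$ as a commutative monoid in the first place).
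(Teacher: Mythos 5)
Your proposal is correct and coincides with the paper's own argument: the corollary is stated there as an immediate consequence of Proposition~\ref{prop:surjectivity} (surjectivity of $\alpha$) and Proposition~\ref{prop:injectivity} (injectivity of $\alpha$), which together say exactly that $\mathcal{M}_d^V$ is free commutative on $\mathcal{I}_d^V$. The only addition you make, spelling out that bijectivity of $\alpha$ is the definition of freeness via the universal property, is harmless bookkeeping.
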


We note that the subset $\mathcal{I}_d^V \subset \mathcal{M}_d^V$ of equivalence classes (with respect to the action of $\mathrm{Diff}_c(U)_1$) of \emph{inseparable} submanifolds of $U$ may be characterised in a slightly simpler way than in Definition \ref{defn:inseparable}, namely: a submanifold $W$ may be separated by $r \geq 2$ pairwise disjoint embeddings $U \hookrightarrow U$ if and only if it may be separated by two disjoint embeddings $U \hookrightarrow U$.

\begin{lemma}
\label{lem:inseparable2}
An element $[W] \in \mathcal{M}_d^V$ is inseparable if and only if $W \neq \emptyset$ and, given any pair of smooth embeddings $i,j \colon U \hookrightarrow U$ with
\[
W \subseteq i(U) \cup j(U) \qquad\text{and}\qquad i(U) \cap j(U) = \emptyset,
\]
then $W \subseteq i(U)$ or $W \subseteq j(U)$.
\end{lemma}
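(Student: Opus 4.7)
The ``only if'' direction is trivial: the condition in the lemma is the special case $r = 2$ of Definition~\ref{defn:inseparable}. For the converse, my plan is to argue by downward induction on $r$, showing that if a non-empty smooth closed $d$-submanifold $W \subset U$ admits some family of $r \geq 3$ pairwise disjoint smooth open embeddings $i_1, \ldots, i_r : U \hookrightarrow U$ with $W \subseteq \bigcup_s i_s(U)$, $W \cap i_s(U) \neq \emptyset$ for all $s$, and $W$ contained in no single $i_s(U)$, then $W$ admits such a family of size $r - 1$. Iterating down to $r = 2$ will then contradict the standing hypothesis of the ``if'' direction, proving $W$ inseparable.

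For the inductive step I first reduce to a configuration of pairwise disjoint closed $m$-discs. Since $W \subseteq \bigsqcup_s i_s(U)$ is a disjoint open cover and $W$ is compact, each $W \cap i_s(U)$ is clopen in $W$, hence compact; so $i_s^{-1}(W) \subset U$ is compact and fits inside the interior of some closed $m$-disc $\bar{B}_s \subset U$. Setting $\bar{D}_s := i_s(\bar{B}_s) \subset i_s(U)$, the discs $\bar{D}_1, \ldots, \bar{D}_r$ are pairwise disjoint and satisfy $W \cap i_s(U) \subset \mathrm{int}(\bar{D}_s)$; the ambient open sets $i_s(U)$ may then be discarded. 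The geometric heart of the argument is to merge $\bar{D}_1$ and $\bar{D}_2$ into a single embedded open $m$-disc disjoint from $\bar{D}_3, \ldots, \bar{D}_r$. Since $m = \dim(V) + 1 \geq 2$, the complement in $U$ of the compact set $\bar{D}_3 \cup \cdots \cup \bar{D}_r$ is path-connected, so I can choose a smooth embedded arc $\gamma$ in $U$ joining a point in $\mathrm{int}(\bar{D}_1)$ to a point in $\mathrm{int}(\bar{D}_2)$, meeting $\partial \bar{D}_1$ and $\partial \bar{D}_2$ transversely once each, otherwise disjoint from $\bar{D}_1 \cup \bar{D}_2$, and avoiding $\bar{D}_3 \cup \cdots \cup \bar{D}_r$. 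A sufficiently thin closed tubular neighborhood $T$ of the portion of $\gamma$ outside $\bar{D}_1 \cup \bar{D}_2$ plays the role of an $m$-dimensional $1$-handle; the boundary connect sum $R := \bar{D}_1 \cup T \cup \bar{D}_2$, after standard corner-smoothing, is diffeomorphic to a closed $m$-disc, so $\mathrm{int}(R)$ is the image of an open embedding $j_1 : U \hookrightarrow U$ that contains $\bar{D}_1 \cup \bar{D}_2$ and is disjoint from $\bar{D}_3 \cup \cdots \cup \bar{D}_r$.

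To finish the inductive step I pair $j_1$ with open embeddings $j_3, \ldots, j_r : U \hookrightarrow U$ whose images are pairwise disjoint small open disc neighborhoods of $\bar{D}_3, \ldots, \bar{D}_r$, chosen small enough to remain disjoint from $j_1(U)$; such neighborhoods exist because $j_1(U)$, $\bar{D}_3, \ldots, \bar{D}_r$ are pairwise disjoint (open or compact) subsets of $U$. The resulting family $j_1, j_3, \ldots, j_r$ then has pairwise disjoint images, covers $W$, meets $W$ in every member, and (since $r \geq 3$) has $W$ contained in no single member, providing the desired $(r-1)$-separation. The only step that requires real care is the handle-attachment diffeomorphism $R \cong \bar{D}^m$, which I expect to be the main technical obstacle; this is classical but slightly delicate to write down rigorously, and if I want to avoid it entirely I can first apply the multi-disc theorem (Proposition~\ref{prop:multi-disc-theorem}) to move the $\bar{D}_s$ to standard round discs around chosen points, reducing the construction of $R$ to a completely explicit local model in $\mathbb{R}^m$.
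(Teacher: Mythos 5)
Your proposal is correct in outline, but it takes a genuinely different route from the paper's. The paper proves the ``if'' direction in one step: after arranging that each $i_s^{-1}(W)$ lies in the closed unit disc and that each $i_s$ is orientation-preserving, it invokes the multi-disc theorem (Proposition~\ref{prop:multi-disc-theorem}) to get an ambient $\Phi\in\mathrm{Diff}_c(U)_1$ carrying the first disc into one standard round ball and \emph{all} of the remaining $r-1$ discs into a second, disjoint round ball; pulling back the two standard ball embeddings along $\Phi^{-1}$ immediately produces the required two-element separating family, with no induction and no gluing of balls. Your downward induction, merging two discs by a $1$-handle along an embedded arc, gives a more hands-on picture but needs two extra inputs that you assert rather than prove: (i) connectivity of $U\setminus(\bar{D}_3\cup\dots\cup\bar{D}_r)$ does not follow from compactness and $m\geq 2$ alone (the complement of an embedded sphere is already disconnected); it holds here because the removed set is a disjoint union of embedded closed balls (Alexander duality, or standardize the discs first), and (ii) the fact that a boundary connected sum of two $m$-balls is again an $m$-ball, which you rightly flag as the delicate step. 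Your fallback---standardize the discs via Proposition~\ref{prop:multi-disc-theorem} and build the tube in an explicit model---repairs both, but once the discs are standard round balls you can simply enclose $\bar{D}_2,\dots,\bar{D}_r$ in one large round ball and $\bar{D}_1$ in another, skipping the merging and the induction entirely; that is exactly the paper's argument (note the multi-disc theorem wants orientation-preserving disc embeddings, a harmless reduction you omit). Two small cosmetic points: $\mathrm{int}(R)$ contains only $\mathrm{int}(\bar{D}_1)\cup\mathrm{int}(\bar{D}_2)$, not the closed discs, which suffices because $W\cap i_s(U)\subset\mathrm{int}(\bar{D}_s)$; and at the start one should discard those $i_s$ with $i_s(U)\cap W=\emptyset$ (at least two survive, by the definition of separability), so that every member of the family meets $W$ throughout the induction.
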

\begin{proof}
The ``only if'' direction is immediate from Definition \ref{defn:inseparable}, setting $r=2$. For the ``if'' direction, let us assume that $W$ is separable and non-empty: we need to find a pair of smooth embeddings $i,j \colon U \hookrightarrow U$ with disjoint images, such that $W \subseteq i(U) \cup j(U)$ and $i(U) \cap W \neq \emptyset \neq j(U) \cap W$. Since $W$ is separable, we have a finite collection $i_1,\ldots,i_r \colon U \hookrightarrow U$ of smooth embeddings ($r \geq 2$) with pairwise disjoint images, such that
\[
W \subseteq \bigcup_{s=1}^r i_s(U) \qquad\text{and}\qquad i_s(U) \cap W \neq \emptyset \text{ for all } s \in \{1,\ldots,r\}.
\]
Without loss of generality we may assume that each $i_s$ is orientation-preserving. Moreover, each $i_s(U) \cap W = i_s(i_s^{-1}(W))$ is a closed subset of $W$, and hence compact, so $i_s^{-1}(W)$ is also compact. Thus we may also assume, without loss of generality, that $i_s^{-1}(W)$ is contained in the closed unit disc $\bD^m \subset \bR^m = U$. Write $j_s = i_s|_{\bD^m} \colon \bD^m \hookrightarrow U$ and choose another collection $k_1,\ldots,k_r \colon \bD^m \hookrightarrow U$ of orientation-preserving embeddings with pairwise disjoint images, such that
\[
k_1(\bD^m) \subseteq \mathring{\bD}^m(0,\ldots,0) \qquad\text{and}\qquad k_2(\bD^m),\ldots,k_r(\bD^m) \subseteq \mathring{\bD}^m(2,0,\ldots,0),
\]
where, for $x \in \bR^m$, $\mathring{\bD}^m(x)$ is the open unit disc in $\bR^m$ centred at $x$. Let $\hat{\imath},\hat{\jmath} \colon U \hookrightarrow U$ be embeddings with $\hat{\imath}(U) = \mathring{\bD}^m(0,\ldots,0)$ and $\hat{\jmath}(U) = \mathring{\bD}^m(2,0,\ldots,0)$. By Proposition \ref{prop:multi-disc-theorem}, we have $\Phi \in \mathrm{Diff}_c(U)_1$ such that $\Phi \circ j_s = k_s$ for all $s \in \{1,\ldots,r\}$. It follows that $\Phi(W)$ is contained in $\hat{\imath}(U) \cup \hat{\jmath}(U)$, but $\Phi(W) \not\subseteq \hat{\imath}(U)$ and $\Phi(W) \not\subseteq \hat{\jmath}(U)$. Now defining $i = \Phi^{-1} \circ \hat{\imath}$ and $j = \Phi^{-1} \circ \hat{\jmath}$, we obtain the required smooth embeddings $i,j \colon U \hookrightarrow U$.
\end{proof}

\subsubsection*{Tangential structures and $E_n$ algebras.}

\begin{remark}
[Tangential structures]
\label{rmk:tangential}
If we fix a tangential structure $\Theta$ and consider the topological cobordism category $\mathcal{C}_{\Theta}^V$, we may ask whether the monoid $\mathcal{M}_{\Theta}^V = h\mathcal{C}_{\Theta}^V(\emptyset,\emptyset)$ is also free commutative when $\mathrm{dim}(V)>0$. It turns out that it is, with a very similar proof to above: the result is that $\mathcal{M}_{\Theta}^V$ is \emph{free commutative} on the subset $\mathcal{I}_{\Theta}^V$ of those $\Theta$-manifolds whose underlying manifold (ignoring the $\Theta$-structure) is inseparable.
\end{remark}

\begin{remark}
[Dimension zero]
\label{rmk:dimension-zero}
When $\mathrm{dim}(V)=0$ (and $d=0$), the monoid $\mathcal{M}_{\Theta}^V$ is not commutative in general. Note that, in dimension zero, a tangential structure is just a topological space $\Theta$, and a $\Theta$-structure on a zero-dimensional manifold (i.e., discrete space) $W$ is just a function $W \to \Theta$. One may then easily see that, when $d=0$ and $\mathrm{dim}(V)=0$ (so $U \cong \bR$), the monoid $\mathcal{M}_{\Theta}^V$ is isomorphic to the \emph{free non-commutative monoid} on $\pi_0(\Theta)$.\footnote{If $\Theta$ is path-connected, this is also free commutative, but only ``by accident''.} Since the only inseparable zero-manifolds are single points (equipped with any $\Theta$-structure), we may identify the set $\mathcal{I}_{\Theta}^V$ of equivalence classes of inseparable $\Theta$-manifolds with $\pi_0(\Theta)$, and say that $\mathcal{M}_{\Theta}^V$ is the free non-commutative monoid on $\mathcal{I}_{\Theta}^V$.
\end{remark}

\begin{remark}
[$E_m$-algebras]
\label{rmk:Em-algebras}
The topological monoid $\mathcal{C}_{\Theta}^V(\emptyset,\emptyset)$ is homotopy equivalent to an $E_m$-algebra, where $m = \mathrm{dim}(V) + 1$. Defining $\mathcal{J}_{\Theta}^V \subset \mathcal{C}_{\Theta}^V(\emptyset,\emptyset)$ to be the subspace of \emph{inseparable} submanifolds, the inclusion induces a map of $E_m$-algebras
\begin{equation}
\label{eq:map-of-Em-algebras}
F_{E_m}(\mathcal{J}_{\Theta}^V) \longrightarrow \mathcal{C}_{\Theta}^V(\emptyset,\emptyset),
\end{equation}
where $F_{E_m}(X)$ denotes the free $E_m$-algebra on a space $X$. The result of this exercise (plus the previous two remarks) is that $\pi_0 (\ref{eq:map-of-Em-algebras}) = (\ref{eq:monoid-hom})$ is an isomorphism (of commutative monoids if $m\geq 2$ and of non-commutative monoids if $m=1$). One may naturally ask whether \eqref{eq:map-of-Em-algebras} induces isomorphisms also on higher homotopy groups. The answer is no in general.

As one counterexample, take $d=1$, $m=3$ and $\Theta = \{*\}$. Then the left-hand side of \eqref{eq:map-of-Em-algebras} is homotopy equivalent to the configuration space of finitely many unordered points in $\bR^3$ each labeled by a point in the space of inseparable links in $\bR^3$. One path-component $X$ of this space consists of configuration of two points in $\bR^3$ each labeled by a point in the space of unknots in $\bR^3$. Using \cite{Hatcher-Smale-conjecture} to see that the space of unknots in $\bR^3$ is homotopy equivalent to $\bR\bP^2$, we see that $H_1(X) \cong (\bZ/2\bZ)^2$. The corresponding\footnote{We have shown that \eqref{eq:map-of-Em-algebras} is a bijection on $\pi_0$, so ``corresponding'' makes sense.} path-component $Y$ of the right-hand side of \eqref{eq:map-of-Em-algebras} is the space of two-component unlinks in $\bR^3$. Using \cite[Theorem 1 and Proposition 3.7]{BrendleHatcher}, we see that $H_1(Y) \cong (\bZ/2\bZ)^3$. Hence \eqref{eq:map-of-Em-algebras} is not surjective on $H_1$, and therefore also not on $\pi_1$.

Another counterexample is given by $d=1$, $m=3$ and $\Theta = \{\pm 1\}$ the tangential structure of orientations. Then one path-component $X$ of the left-hand side of \eqref{eq:map-of-Em-algebras} is homotopy equivalent to the space of configurations of two points in $\bR^3$ each labeled by a point in the space of \emph{oriented} unknots in $\bR^3$. Since the space of oriented unknots in $\bR^3$ is homotopy equivalent to $S^2$, we have $H_1(X) \cong \bZ/2\bZ$. The corresponding path-component $Y$ of the right-hand side of \eqref{eq:map-of-Em-algebras} is the space of two-component \emph{oriented} unlinks in $\bR^3$. Using \cite[Theorem 1 and Proposition 3.3]{BrendleHatcher}, we see that $H_1(Y) \cong \bZ \oplus \bZ/2\bZ$. So, again, \eqref{eq:map-of-Em-algebras} is not surjective on $H_1$, and therefore also not on $\pi_1$.
\end{remark}



%
%
%
%
%
%
%

\bibspread

\bibliography{biblio}

\medskip

\end{document}